\numberwithin{equation}{section}
\newtheorem{theorem}{Theorem}[section]
\newtheorem{proposition}[theorem]{Proposition}
\newtheorem{lemma}[theorem]{Lemma}
\newtheorem{corollary}[theorem]{Corollary}
\newtheorem{claim}[]{Claim}
\theoremstyle{definition}
\newtheorem{definition}[theorem]{Definition}
\newtheorem{remark}[theorem]{Remark}
\newcommand{\Ac}{\mathcal{A}^c}
\newcommand{\al}{\alpha}
\newcommand{\V}{\mathcal{V}}
\newcommand{\R}{\mathbb{R}}
\newcommand{\N}{\mathbb{N}}
\newcommand{\mH}{\mathcal{H}}
\newcommand{\F}{\mathcal{F}}
\newcommand{\sB}{\widetilde{B}}
\newcommand{\C}{\mathcal{C}}
\newcommand{\lc}{\scalebox{1.8}{$\llcorner$}}
\newcommand{\La}{\Lambda}
\newcommand{\Si}{\Sigma}
\newcommand{\de}{\delta}
\newcommand{\ep}{\epsilon}
\newcommand{\Om}{\Omega}
\newcommand{\wti}{\widetilde}
\newcommand{\mZ}{\mathbb{Z}}
\newcommand{\mS}{\mathcal{S}}
\newcommand{\M}{\mathbf{M}}
\newcommand{\bL}{\mathbf{L}}
\newcommand{\bI}{\mathbf{I}}
\newcommand{\mf}{\mathbf{f}}
\newcommand{\mF}{\mathbf{F}}
\newcommand{\mR}{\mathcal{R}}
\newcommand{\X}{\mathfrak{X}}
\newcommand{\btau}{\boldsymbol\tau}
\newcommand{\bleta}{\boldsymbol{\eta}}
\newcommand{\n}{\mathbf{n}}
\newcommand{\md}{\mathbf{d}}
\newcommand{\rom}[1]{\expandafter\romannumeral #1}
\newcommand{\Rom}[1]{\uppercase\expandafter{\romannumeral #1}}
\newcommand{\VarTan}{\operatorname{VarTan}}
\newcommand{\spt}{\operatorname{spt}}
\newcommand{\Div}{\operatorname{div}}
\newcommand{\vol}{\operatorname{Vol}} 
\newcommand{\Area}{\operatorname{Area}}
\newcommand{\Clos}{\operatorname{Clos}}
\newcommand{\interior}{\operatorname{int}}
\title[The Existence of G-Invariant constant mean curvature Hypersurfaces]{The Existence of G-Invariant constant mean curvature Hypersurfaces}
\author[Zhiang Wu]{Zhiang Wu}
\address{Department of Mathematics, Peking University, Haidian, Beijing}
\email{wuzhiang@pku.edu.cn}
\author[Tongrui Wang]{Tongrui Wang}
\address{Department of Mathematics, Nanjing University, Gulou, Nanjing}
\email{DZ1721003@smail.nju.edu.cn}
\begin{document}
\maketitle
\begin{abstract}
  In this paper, we consider a closed Riemannian manifold $M^{n+1}$ with dimension $3\leq n+1\leq 7$, and a compact Lie group $G$ acting as isometries on $M$ with cohomogeneity at least $3$. Suppose  the union of non-principal orbits $M\setminus M^{reg}$ is a smooth embedded submanifold of $M$ without boundary and ${\rm dim}(M\setminus M^{reg})\leq n-2 $.
  Then for any $c\in\mathbb{R}$, we show the existence of a nontrivial, smooth, closed, $G$-equivariant almost embedded $G$-invariant hypersurface $\Sigma^n$ of constant mean curvature $c$.
\end{abstract}

\vspace{0.5cm}
\section{Introduction}
Given an $(n+1)$-dimensional closed, oriented, smooth Riemannian manifold $(M^{n+1},g_{_M})$, constant mean curvature (CMC) hypersurfaces are critical points of the area functional amongst variations preserving the enclosed volume. CMC hypersurfaces is a classical and important topic in differential geometry. The existence problem for CMC hypersurfaces has been studied from a number of perspectives.

In the case $n=2$, Heinz \cite{Heinz54,Heinz69}, Hildebrandt \cite{Hildebrandt70}, Struwe \cite{struwe85, struwe86, struwe88},Brezis-Coron \cite{Brezis and Coron}, etc, studied the boundary value problems of CMC hypersurfaces using the mapping method. For the case of $n\geq3$, Duzaar-Steffen \cite{Duzaar-Steffen96}, Duzaar-Grotowski \cite{Duzaar-Grotowski00} studied the boundary value problems of CMC hypersurfaces using the geometric measure theory, while both methods can only produce CMC hypersurfaces whose mean curvatures satisfy a certain upper bound.

For the case of closed CMC hypersurfaces, the classical approach is to solve the isoperimetric problem for a given volume. It is well-known that there exists a smooth minimizer for each fixed volume except for a set of Hausdorff dimension at most $n-7$, see for instance \cite{Mo03}.  However, this approach does not yield any control on the value of the mean curvature. Xin Zhou and Jonathan J. Zhu developed a min-max theory for the construction of constant mean curvature hypersurfaces of prescribed mean curvature $c$ in \cite{zhou and zhu} which considered the Lagrange-multiplier functional
\begin{equation}
\label{E:Ac0}
\mathcal{A}^c = \Area - c\vol.
\end{equation}
It is obvious that a hypersurface has constant mean curvature $c$  if it is a critical point of the functional $\Ac$. Recently, they showed that the CMC min-max theory can be extended to construct min-max prescribed mean curvature hypersurfaces for certain classes of prescription function, including a generic set of smooth functions, and all nonzero analytic functions in \cite{zhou and zhu 20}.

We want to consider when a Lie group $G$ acts by isometries on $M$, can the CMC min-max theory generate a CMC hypersurface invariant under the action of $G$?
A similar problem for minimal hypersurfaces has been studied by many people.
To the author's knowledge, J. Pitts and J. H. Rubinstein firstly announced a version of the equivariant min-max for finite group acting on $3$-dimensional manifolds in \cite{Pitts-Rubinstein} and \cite{Pitts-Rubinstein-2}. Inspired by Ketover's work, a more general version of $G$-invariant min-max under the smooth sweepouts settings in \cite{CD03} and \cite{De2013The} was built by Z. Liu in \cite{Liu} to prove the existence of $G$-invariant smooth minimal hypersurface on manifolds with dimension $3\leq n+1\leq 7$. After that, T.R Wang in \cite{WTR} built the $G$-equivariant min-max theory under the Almgren-Pitts settings to prove that there are infinitely many minimal hypersurfaces on positive Ricci curvature manifolds with dimension $3\leq n+1\leq 7$ and ${\rm dim}(M\setminus M^{reg})\leq n-2 $.

We resolve the existence of closed $G$-invariant CMC hypersurfaces, based on the min-max theory developed in \cite{zhou and zhu} for CMC hypersurfaces. In particular, we prove that:

\begin{theorem}
\label{thm:main theorem}
Let $2\leq n\leq 6$, and $(M^{n+1}, g_{_M})$ be an $(n+1)$-dimensional smooth, closed Riemannian manifold with a compact Lie group $G$ acting as isometries of cohomogeneity $Cohom(G)\geq 3$.
	Suppose  the union of non-principal orbits $M\setminus M^{reg}$ is a smooth embedded submanifold of $M$ without boundary and $dim(M\setminus M^{reg})\leq n-2 $.
Then for any $c\in\mathbb{R}$, there exists a nontrivial, smooth, closed, $G$-equivariant almost embedded $G$-invariant hypersurface $\Sigma^n$ of constant mean curvature $c$.
\end{theorem}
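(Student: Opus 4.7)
The plan is to run an equivariant version of the constant-mean-curvature min-max of Zhou--Zhu \cite{zhou and zhu}, using the $G$-equivariant combinatorial framework of \cite{WTR}. Working in the space $\C^G(M)$ of $G$-invariant Caccioppoli sets in $M$ with the flat topology, I would restrict the Lagrange-multiplier functional $\Ac$ of \eqref{E:Ac0} to $\C^G(M)$. A $G$-equivariant $\Ac$-sweepout is a continuous path $\{\Om_t\}_{t\in[0,1]} \subset \C^G(M)$ with $|\Om_0|=0$ and $|\Om_1|=|M|$; let $\mathcal{X}^G_c$ denote a suitable homotopy class of such sweepouts and set
\[
c_\infty \;=\; \inf_{\{\Om_t\}\in\mathcal{X}^G_c}\sup_{t\in[0,1]} \Ac(\Om_t).
\]
Positivity $c_\infty > 0$ is verified using the isoperimetric profile $I$ of $M$: by continuity of $t\mapsto|\Om_t|$ one can interpolate to any $V_0 \in (0,|M|)$, yielding $\Ac(\Om_{t^\ast}) \geq I(V_0)-cV_0$; taking $V_0$ small when $c>0$ (so that the $I(V_0)\sim c_n V_0^{n/(n+1)}$ behavior dominates $cV_0$) or $V_0=|M|/2$ when $c\leq 0$ gives a uniform positive lower bound on $c_\infty$ that is independent of the chosen sweepout.

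Next I would carry out the min-max extraction. The Zhou--Zhu pull-tight deformation, when performed with $G$-equivariant vector fields obtained by averaging over $G$ the vector fields that decrease $\Ac$, preserves the invariant class $\C^G(M)$; this yields a minimizing sequence $\{\Om_t^i\}\in \mathcal{X}^G_c$ whose slices at parameters $t_i$ close to where the sup is attained converge (along a subsequence) in the varifold sense to a $G$-invariant integral $n$-varifold $V$ of mass $c_\infty$ with generalized mean curvature identically $c$, namely $\de V(X)= c\int \Div_{T_xV}(X)\,d\|V\|$ for every $X\in\X(M)$. Adapting the equivariant combinatorial argument of \cite{WTR} to the CMC setting then shows that $V$ is $c$-almost minimizing in sufficiently small $G$-equivariant annuli around every point of the principal orbit stratum $M^{reg}$.

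The hard part will be the regularity step: upgrading the $c$-almost minimizing $G$-invariant varifold $V$ to a smooth, almost embedded, $G$-invariant CMC hypersurface $\Si^n$. On $M^{reg}$, I would carry out replacements inside small $G$-equivariant annuli by minimizing $\Ac$ within $\C^G(M)$ subject to prescribed boundary data; existence and smoothness of such $G$-invariant minimizers off the singular orbits should follow from descending to the quotient $M^{reg}/G$, which is a Riemannian manifold of dimension $\geq 3$ by the cohomogeneity hypothesis. Combining these equivariant replacements with the Zhou--Zhu regularity and almost-embeddedness theorems should then yield smoothness and almost embeddedness of $V$ on $M^{reg}$. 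Finally, the codimension hypothesis $\dim(M\setminus M^{reg})\leq n-2$ permits an Allard-type removable-singularity argument to extend $V$ as a $G$-invariant almost embedded smooth CMC hypersurface across the non-principal orbit locus, producing the desired $\Si^n$; non-triviality is immediate from $c_\infty>0$.
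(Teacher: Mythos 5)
Your skeleton --- $G$-invariant sweepouts in $\C^G(M)$ for $\Ac$, positivity of the width via the isoperimetric profile, pull-tight with averaged equivariant vector fields, almost minimizing in equivariant annuli, replacements, and extension across $M\setminus M^{reg}$ --- is the same as the paper's, but the regularity step (which you rightly flag as the hard part) has genuine gaps. First, minimizing $\Ac$ among $G$-invariant competitors only yields a critical point for $G$-invariant variations, whereas the Zhou--Zhu interior regularity theorem needs a genuine local minimizer. The paper resolves this upstairs: a $G$-constrained minimizer is shown to be an unconstrained local $\Ac$-minimizer by averaging the indicator function of an arbitrary competitor over $G$ and slicing the resulting normal current (Lemma \ref{L:minimisation-II}). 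Your alternative of descending to $M^{reg}/G$ turns $\Ac$ into a functional weighted by the orbit-volume density, for which the minimizer regularity, the compactness theory for stable CMC hypersurfaces, and the almost-embeddedness analysis would all have to be redeveloped; you do not address this. More fundamentally, the Zhou--Zhu tangent-cone and gluing arguments use replacements in annuli comparable to Euclidean annuli of $T_pM$, while equivariant annuli are tubes around orbits and $T_pM$ is only a $G_p$-space. The paper's key device, absent from your proposal, is the splitting property of blowups, $\overline{V}=T_p(G\cdot p)\times W$ with $W$ a $G_p$-invariant varifold in ${\bf N}_p(G\cdot p)$ (Lemma \ref{L:splitting}), which reduces the classification of tangent cones and the smooth gluing of successive replacements to the normal slice, where Liu's equivariant replacement regularity applies. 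Without either this lemma or a fully developed quotient theory, "combine with the Zhou--Zhu regularity theorems" does not go through.

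Second, the extension across $M\setminus M^{reg}$ is not an Allard-type argument: the limit varifold may carry multiplicity two along its touching set, and Allard's theorem does not remove a codimension-two singular set of a CMC varifold. The paper instead (i) establishes almost minimizing in \emph{regular annuli} around the whole connected component $P_p$ of $M\setminus M^{reg}$ --- not merely around points of $M^{reg}$ as you propose --- so that successive replacements produce a \emph{stable} almost embedded $(G,c)$-hypersurface in a deleted neighborhood of $G\cdot P_p$; (ii) extends the first variation identity across $G\cdot P_p$ by a Harvey--Lawson cutoff using $\dim(G\cdot P_p)\leq n-2$; and (iii) invokes the Bellettini--Wickramasekera regularity theorem for stable CMC integral varifolds without classical singularities. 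Stability near the non-principal locus is essential for (iii), and your plan --- regularity on $M^{reg}$ from point-centered replacements plus a removability claim --- does not supply it.
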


\medskip

Our work is inspired by the approach of X. Zhou and  J. J. Zhu and Z. Liu.
However, there are some difficulties when dealing with $G$-invariant elements.
Since we only consider the Caccioppoli sets which are invariant under the action of $G$, the variations, in this case, should keep being $G$-invariant.
Hence, due to the $G$-invariant restrictions, it seems that our variation result is weaker than the classical one which takes all kinds of variations into account.
There are already some approaches dealing with this kind of difficulty, see for example \cite{ketover} \cite{Liu} \cite{WTR}.
The idea is using an averaging argument to make the usual vector field or current to $G$-invariant one.
Combining the approach in \cite{zhou and zhu} under $G$-invariant restrictions with the averaging arguments, we show the existence of a nontrivial $G$-invariant integral varifold as a weak solution of $G$-invariant prescribed constant mean curvature $c$-hypersurfaces.

Since we are dealing with tubes around orbit rather than balls, there are also some technical problems coming up in the proof of the regularity result.
In the paper \cite{zhou and zhu}, since the $c$-min-max varifold has replacements in small annuli and the annuli in tangent space $T_pM$ are closed to annuli in manifold $M$, the regularity of tangent cones as well as various blowups of the $c$-min-max varifold can be built by the regularity result for varifolds with good replacement property \cite[Appendix C]{zhou and zhu}.
However, under the $G$-invariant restrictions, we only have replacements in $G$-sets for $(G,c)$-min-max varifold, and the $G$-annuli in manifold $M$ can be vastly different to the annuli in tangent space $T_pM$ (since $T_pM$ is only a $G_p$-space not a $G$-space).

To overcome this problem, we show the splitting property of tangent cones as well as various blowups of the $c$-min-max varifold, which has been used in \cite[Appendix 2]{Liu} and \cite[Lemma 6.5]{WTR}.
Due to the splitting property, we only need to show the regularity of tangent cones or various blowups in the normal space ${\bf N}_p(G\cdot p)$.
As for the normal part of tangent cones, we prove it is invariant under the action of the isotropy group $G_p$ before we apply the argument in \cite[Proposition 5.11]{zhou and zhu} with the good replacement property and regularity in \cite[Appendix C]{zhou and zhu} being replaced by the result in \cite[Proposition 6.2]{Liu}.
When it comes to the normal part of various blowups, we add the assumption that the union of non-principal orbits $M\setminus M^{reg}$ is a smoothly embedded submanifold of $M$ without boundary and ${\rm dim}(M\setminus M^{reg})\leq n-2 $ since we do not know whether it is $G_p$-invariant or not.
Thus we can consider $M\setminus M^{reg}$ as a whole, and the argument in the proof of \cite[Lemma 5.10]{zhou and zhu} would carry over again using the good $G$-replacements and \cite[Proposition 6.2]{Liu}.

\subsection{Outline}{$~$}

In section \ref{preliminary}, we describe some basic notations and some useful propositions of $G$-invariant currents and varifolds including the compactness theorem and the equivalence between $c$-bounded first variation and $(G,c)$-bounded first variation for varifold.

In section \ref{CMC min-max}, we define the $G$-invariant min-max theory under the Almgren-Pitts setting dealing with G-invariant objects in our paper. And we can prove the existence of nontrivial $G$-sweepouts (Theorem \ref{T:existence of nontrivial G-sweepouts}). Then for any critical sequence, we can extract a min-max sequence $\{\partial\Om^i_{x_i}\}$ that converges in the measure-theoretic sense to a nontrivial varifold $V$.

In section \ref{Tightening1}, we first recall the tightening map adapted to the $\Ac$ functional constructed in \cite{zhou and zhu}. In the remainder of section \ref{Tightening1} we will prove that after applying the modified tightening map to a critical sequence $S=\{\varphi_i\}_{i\in\N}$, every element $V$ in the critical set $C(S)$ has $c$-bounded first variation.

In section \ref{(G,c)-Almost Minimizing}, we introduce the notion of $(G,c)$-almost minimizing varifolds, and show the existence of $V\in C(S)$ which is $(G,c)$-almost minimizing in small {\em regular} annuli with $c$-bounded first variation.

In section \ref{Regularity for $(G,c)$-min-max varifold}, we  first show the existence of $(G,c)$-replacement for $(G,c)$-almost minimizing varifolds, then we show the regularity for $(G,c)$-replacement. Finally our regularity result for $(G,c)$-min-max varifold (Theorem \ref{T:main-regularity}) follows similarly to \cite[Section 6]{zhou and zhu}.

\medskip

\section{Preliminary}\label{preliminary}

In this paper, we always assume $(M^{n+1},g_{_M})$ to be a closed, oriented, smooth Riemannian manifold of dimension $3\leq (n+1)\leq 7$.
Assume that $G$ is a compact Lie group acting as isometries (of cohomogeneity $l={\rm Cohom}(G)\geq 3$) on $M$.
Let $\mu$ be a bi-invariant Haar measure on $G$ which has been normalized to $\mu(G)=1$.
By the main theorem of \cite{moore}, there is an orthogonal representation of $G$ on some Euclidean space $\R^L$, $L\in\N$, and an isometric embedding from $M$ into $\R^L$ which is $G$-equivariant.

\subsection{Basic notation}\label{notation}{~}

In this section, we collect some definitions and notions from geometric measure theory. We follow the notations from \cite{pitts}, \cite{zhou and zhu} and \cite{Liu}.
Since we are constantly dealing with G-invariant objects in our paper, we will sometimes add $G$- in front of objects meaning they are $G$-invariant:
  \begin{itemize}
    \item a $G$-varifold $V$ satisfies $g_{\#} V=V$ for all $g\in G.$
    \item a $G$-current $T$ satisfies $g_{\#} T=T$ for all $g\in G.$
    \item a $G$-vector field $X$ satisfies $g_{*} X=X$ for all $g\in G.$
    \item a $G$-map $F$ satisfies $g^{-1}\circ F\circ g=F,\forall g\in G$, (i.e. $F$ is $G$-equivariant).
    \item a $G$-set ($G$-neighborhood) is an (open) set which is a union of orbits.
  \end{itemize}
We denote by $\mH^k$ the $k$-dimensional Hausdorff measure; $\X(M)$ the space of  smooth  vector fields in $M$. $B_r(p)$, $\Clos(B_r(p))$  denote respectively the open or closed Euclidean ball of $\R^L$ and $\sB_r(p)$, $\Clos(\sB_r(p))$ denote respectively the open or closed geodesic ball of $(M,g_{_M})$.
We will also sometimes add a subscript or superscript $G$ to signify $G$-invariance just like in \cite{Liu}:
	\newcommand{\an}{\textnormal{An}^G}
	\newcommand{\ann}{\mathcal{AN}^G}
	\begin{itemize}
		\item $\pi$: the projection $\pi:M\mapsto M/G$ defined by $p \mapsto [p].$
		\item $B_\rho^G(p)$: open tubes with radius $\rho$ around the orbit $G\cdot p$ in $\R^L$.
		\item $\sB_\rho^G(p)$: open tubes with radius $\rho$ around the orbit $G\cdot p$ in $M$.
		\item $\mathfrak{X}^G(M)$: the space of smooth $G$-vector fields on $M$.
		\item $\an(p,s,t)$: the open tube $B_t^G(p)\setminus \Clos(B^G_s(p))$.
		\item $\ann_r(p)$: the set $\{\an(p,s,t):0<s<t<r\}.$
		\item $T_q(G\cdot p)$: the tangent space of the orbit $G\cdot p$ at some point $q\in G\cdot p$.
		\item ${\bf N}_p(G\cdot p)$: the normal vector space of the orbit $G\cdot p$ in $M$ at point $p\in G\cdot p$.
		\item ${\rm dim}_p$: the dimension of the orbit $G\cdot p$ and ${\rm dim}_p\leq n+1-l$.
	\end{itemize}

\medskip

The spaces we will work with in this paper are:
\begin{itemize}
\setlength{\itemsep}{5pt}
\item  $\bI_{k}(M)$ ($\bI_{k}^G(M)$) the space of $k$-dimensional ($G$-invariant) integral currents in $\mathbb{R}^L$ with support contained in $M$ (see \cite[4.2.26]{federer} for more details);
\item  ${\mathcal Z}_k(M)$ (${\mathcal Z}_k^G(M)$) the space of ($G$-invariant)  integral currents  $T \in \bI_{k}(M)$ ($T \in \bI_{k}^G(M)$) with $\partial T=0$
\item $\mathcal{V}_k(M)$ ($\mathcal{V}_k^G(M)$) the closure, in the weak topology, of the space of $k$-dimensional ($G$-invariant) rectifiable varifolds in $\mathbb{R}^L$ with support contained in $M$. The space of integral ($G$-invariant) rectifiable $k$-dimensional varifolds with support contained in $M$ is denoted by $\mathcal{IV}_k(M)$ ($\mathcal{IV}_k^G(M)$).
\item  $\C(M)$ ($\C^G(M)$) the space of ($G$-invariant) sets $\Om\subset M$  with finite perimeter (Caccioppoli sets), \cite[\S 14]{simon}.
\end{itemize}

Just like in \cite{zhou and zhu} we also utilize the following definitions:
\begin{enumerate}[(a)]
\label{En: notations}
\item Given $T\in\bI_{k}(M)$ , $|T|$ and $\|T\|$ denote respectively the integral varifold and Radon measure in $M$ associated with $T$;
\item $\F$ and $\M$ respectively the flat norm \cite[\S 31]{simon} and mass norm \cite[26.4]{simon} on $\bI_k(M)$; The varifold ${\bf F}$-{\it metric} on $\mathcal{V}_k(M)$ is defined in  {Pitts's book} \cite[P.66]{pitts}, which induces the varifold weak topology on $\mathcal{V}_k(M) \cap \{V:\|V\|\leq C \}$ for all $C$. The current ${\bf F}$-{\it metric} on $\bI_{k}(M)$ is defined by  $${\bf F}(S,T)=\F (S-T)+{\bf F}(|S|,|T|);$$
    Keep in mind that the mass $\M$ is continuous in the current ${\bf F}$-{\it metric} but not in the flat topology, and  the current ${\bf F}$-{\it metric} satisfies $$\F (S-T)\leq {\bf F}(S,T)\leq 2\M(S-T);$$
\item Given $c>0$, a varifold $V\in \V_k(M)$ is said to have {\em $c$-bounded first variation in an open subset $U\subset M$}, if
\[ |\de V(X)|\leq c \int_M|X|d\mu_V, \quad \text{for any } X\in\X(U); \]
here the first variation of $V$ along $X$ is $\de V(X)=\int_{G_k(M)} \Div_S X(x)d V(x, S)$;
\item Given a set $\Om\in\C(M)$ ($\C^G(M)$) with finite perimeter, $[[\Om]]$  denotes the corresponding ($G$-invariant) integral currents with the natural orientation.  $|\partial\Om|$ and $\partial\Om$ denote respectively the corresponding ($G$-invariant) integral rectifiable varifold and reduced-boundary of $[[\Om]]$ as an ($G$-invariant) integral current. When the boundary $\Si=\partial\Om$ is a smooth immersed hypersurface, we have \[\Div_{\Si}X=H\,  X\cdot \vec{\nu},\] where  $\vec{\nu}=\vec{\nu}_{\partial\Om}$ is the outward pointing unit normal of $\partial \Om$, \cite[14.2]{simon} and $H$ is the mean curvature of $\Si$ with respect to $\vec{\nu}$.
\end{enumerate}

\medskip
In \cite{zhou and zhu},  Xin Zhou and Jonathan J. Zhu developed a min-max theory for the construction of constant mean curvature hypersurfaces of prescribed mean curvature $c$ in $(M,g_{_M})$. They studied the following {\em $\Ac$-functional} defined on $\C(M)$ for $c>0$
\begin{equation}
\label{E: Ac}
\Ac(\Om)=\mH^n(\partial\Om)-c\mH^{n+1}(\Om).
\end{equation}
The {\em first variation formula} for $\Ac$ along $X\in \X(M)$ is
\begin{equation}
\label{E: 1st variation for Ac}
\de\Ac|_{\Om}(X)=\int_{\partial\Om}\Div_{\partial \Om}X d\mu_{\partial\Om}-c\int_{\partial\Om}X\cdot \vec{\nu} \, d\mu_{\partial\Om},
\end{equation}

When $\Om$ is a critical point of $\Ac$ and $\Si=\partial\Om$ is a smooth immersed hypersurface, then (\ref{E: 1st variation for Ac}) directly implies that $\Si$ has constant mean curvature $c$ with respect to the outward unit normal $\vec{\nu}$. In this case, we can calculate the {\em second variation formula} for $\Ac$ along normal vector fields $X\in \X(M)$ such that $X=\varphi\vec{\nu} $ along $\Si$ where $\varphi\in C^\infty(\Si)$ (see \cite[Proposition 2.5]{BCE88}),
\begin{equation}
\label{E: 2nd variation for Ac}
\de^2\Ac|_{\Om}(X,X) = \Rom{2}_\Si (X,X) =\int_{\Si}\left( |\nabla^\Si\varphi|^2-\left(Ric^M(\vec{\nu}, \vec{\nu})+|A^\Si|^2\right)\varphi^2\right)d\mu_{\Si}.
\end{equation}
In the above formula, $\nabla^\Si\varphi$ is the gradient of $\varphi$ on $\Si$; $Ric^M$ is the Ricci curvature of $M$; $A^\Si$ is the second fundamental form of $\Si$.

\subsection{$G$-invariant currents and varifolds}\label{Sec-G-current}{~}

 We will deal with G-invariant objects in our paper. Now we collect some useful propositions for $G$-currents and $G$-varifolds from \cite{WTR} and \cite{Liu} .

\begin{proposition}[Compactness Theorem for ${\bf I}_k^G(M)$ ]\label{G-current compactness}
	For any $C>0$, the set:
$$\{ T\in {\bf I}_k^G(M): {\bf M}(T)+{\bf M}(\partial T) \leq C\} $$ is compact under the flat metric $\mathcal{F}$.
\end{proposition}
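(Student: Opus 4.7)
The plan is to deduce this from the classical Federer--Fleming compactness theorem for integral currents in $M$, and then to verify that $G$-invariance is preserved in the limit. Concretely, given a sequence $\{T_i\}\subset \bI_k^G(M)$ with $\mathbf{M}(T_i)+\mathbf{M}(\partial T_i)\leq C$, I would first forget the $G$-invariance and apply the usual compactness theorem (e.g.\ \cite[Theorem 27.3]{simon} or \cite[4.2.17]{federer}) to extract a subsequence, still denoted $\{T_i\}$, converging in the flat metric $\F$ to some $T\in\bI_k(M)$.

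The second and only substantive step is to check that $T$ actually lies in $\bI_k^G(M)$, i.e.\ that $g_\# T = T$ for every $g\in G$. For each fixed $g\in G$, the action $g\colon M\to M$ is a smooth isometry, so the pushforward $g_\#\colon \bI_k(M)\to \bI_k(M)$ is continuous with respect to the flat metric $\F$; indeed, if $T_i = R_i + \partial S_i$ then $g_\# T_i = g_\# R_i + \partial(g_\# S_i)$ with the same mass bounds on $g_\# R_i$ and $g_\# S_i$, so $\F(g_\# T_i, g_\# T)\leq \F(T_i, T)\to 0$. Since each $T_i$ is $G$-invariant we have $g_\# T_i = T_i$, hence passing to the limit gives $g_\# T = T$. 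This holds for all $g\in G$, so $T\in \bI_k^G(M)$, which proves closedness of the $G$-invariant subset inside the mass-bounded sublevel set.

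I expect no serious obstacle: the theorem is essentially the statement that $\bI_k^G(M)$ is a closed subset of $\bI_k(M)$ in the flat topology, intersected with the classical compactness result. The only point requiring care is the continuity of $g_\#$ under $\F$, which follows immediately from the definition of the flat metric together with the fact that isometries preserve mass; one does not need the averaging argument used elsewhere in the paper for other $G$-invariance constructions, because here the invariance is already built into the approximating sequence and merely needs to be transported to the limit.
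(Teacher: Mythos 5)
Your proof is correct and is essentially the standard argument: the paper itself states this proposition without proof, deferring to \cite{WTR} and \cite{Liu}, where the same route is taken — Federer--Fleming compactness followed by the observation that $g_\#$ is an isometry of the flat metric (since $g$ preserves mass and commutes with $\partial$), so $G$-invariance passes to the limit. The only cosmetic slip is that the flat-distance estimate should be applied to a decomposition of the difference $T_i - T = R_i + \partial S_i$ rather than of $T_i$ itself.
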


\begin{proposition}[Compactness Theorem for $\mathcal{V}^G_k(M)$ ]\label{G-varifold compactness}
For any $C>0$, the set:
$$\{ V\in \V_k^G(M): ||V||(M) \leq C\} $$ is compact under the  ${\bf F}$-{\it metric} of varifolds.
\end{proposition}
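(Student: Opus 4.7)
The plan is to realise $\V_k^G(M)\cap\{\|V\|(M)\leq C\}$ as a closed subset of the classical compact set $\V_k(M)\cap\{\|V\|(M)\leq C\}$, exactly parallel to how one would derive Proposition \ref{G-current compactness} from the Federer--Fleming compactness theorem for integral currents. Since being $G$-invariant is a condition that passes to weak limits whenever $G$ acts by isometries, this reduction should be essentially automatic.

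In detail, given a sequence $\{V_i\}\subset\V_k^G(M)$ with $\|V_i\|(M)\leq C$, I would first invoke the standard compactness theorem for varifolds (see \cite[2.1(18)]{pitts}), which yields a subsequence (not relabeled) converging in the ${\bf F}$-metric to some $V\in\V_k(M)$ with $\|V\|(M)\leq C$ by lower semicontinuity of mass. The only thing left to verify is that the limit $V$ lies in $\V_k^G(M)$. For each fixed $g\in G$, the pushforward map $g_\#:\V_k(M)\to\V_k(M)$ is continuous in the ${\bf F}$-metric: this is immediate from the definition of ${\bf F}$ (testing against compactly supported continuous functions on $G_k(M)$) together with the fact that $g$ acts as an isometry on $M$ via the equivariant embedding into $\R^L$ provided by Moore's theorem. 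Since $g_\# V_i=V_i$ for every $i$ by the $G$-invariance assumption, passing to the ${\bf F}$-limit gives $g_\# V=V$. As $g\in G$ is arbitrary, $V$ is $G$-invariant, and since it is a weak limit of $G$-invariant rectifiable varifolds it belongs to $\V_k^G(M)$ by the very definition given in Section \ref{notation}.

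I do not anticipate a genuine obstacle; the argument is formally the same as for the current compactness statement that immediately precedes it. The only point that needs a moment's care is the continuity of $g_\#$ in the ${\bf F}$-metric, rather than merely in the weak topology, but this follows from the two estimates recalled in item (b) of the list on page~\pageref{En: notations}: $g_\#$ preserves mass (so ${\bf F}(g_\# V, g_\# W)={\bf F}(V,W)$ when we combine the standard invariance of the Pitts metric under isometric pushforward with the same invariance for the Radon measure component).
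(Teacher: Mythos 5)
Your argument is correct and is the standard one: the paper itself states this proposition without proof (citing \cite{WTR} and \cite{Liu}), and the expected argument there is exactly yours — classical ${\bf F}$-metric compactness of the mass-bounded set in $\V_k(M)$, plus the observation that $g_\#$ is an ${\bf F}$-isometry for each $g\in G$ (since $G$ acts by isometries via the equivariant embedding), so the condition $g_\#V=V$ passes to the limit and the limit stays in the closed set $\V_k^G(M)$. No issues.
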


\begin{proposition}[Restrict to $G$-sets]\label{restrict}
	For any $V\in\mathcal{V}^G_k(M)$ and Borel $G$-set $U$, we have $V|_U\in\V^G_k(M)$.
\end{proposition}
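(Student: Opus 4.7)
The plan is to unwind the definitions and verify $G$-invariance by a direct pushforward computation. Recall that a varifold $V\in\V_k(M)$ is a Radon measure on the Grassmann bundle $G_k(M)$, and the isometric $G$-action on $M$ lifts to $G_k(M)$ by $g\cdot(x,S)=(g\cdot x,\,dg_x(S))$; the condition $V\in\V^G_k(M)$ says $g_\# V=V$ for all $g\in G$. The restriction to a Borel set $U\subset M$ is by definition the Radon measure $V|_U:=V\llcorner \pi^{-1}(U)$, where $\pi:G_k(M)\to M$ is the canonical projection, so $V|_U$ is automatically a $k$-varifold.

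The one structural observation I would use is that $\pi^{-1}(U)$ is invariant under the lifted action on $G_k(M)$: since $U$ is a $G$-set, $g\cdot U=U$ for each $g\in G$, and equivariance of $\pi$ gives
\begin{equation*}
g\cdot \pi^{-1}(U)=\pi^{-1}(g\cdot U)=\pi^{-1}(U).
\end{equation*}
With this in hand, I would simply verify, for any Borel $A\subset G_k(M)$ and any $g\in G$, that
\begin{align*}
\bigl(g_\#(V|_U)\bigr)(A) &= (V|_U)(g^{-1}A) = V\bigl(g^{-1}A\cap \pi^{-1}(U)\bigr)\\
&= V\bigl(g^{-1}(A\cap \pi^{-1}(U))\bigr) = (g_\# V)(A\cap \pi^{-1}(U))\\
&= V(A\cap \pi^{-1}(U)) = (V|_U)(A),
\end{align*}
where the third equality uses $g^{-1}\pi^{-1}(U)=\pi^{-1}(U)$ and the fifth uses $g_\# V=V$. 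Hence $g_\#(V|_U)=V|_U$ for every $g\in G$, which is exactly $V|_U\in\V^G_k(M)$.

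There is no real obstacle here: the statement is a definitional verification whose only substantive input is that the lift of a $G$-set along $\pi:G_k(M)\to M$ is $G$-invariant, which is immediate from $U$ being a union of orbits. I would note in passing that the same argument works at the level of rectifiable or integral varifolds, since set-theoretic restriction preserves rectifiability and integer multiplicity, and the $G$-invariance computation above is independent of such structural refinements.
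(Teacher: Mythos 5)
Your proof is correct. The paper itself gives no argument for this proposition (it is simply quoted from \cite{WTR} and \cite{Liu}), and your direct verification --- lifting the $G$-invariance of $U$ to $\pi^{-1}(U)\subset G_k(M)$ and chasing the pushforward --- is the standard one. The only point worth making explicit is that the varifold pushforward $g_\#$ in general carries a tangential Jacobian factor $J_Sg$ in addition to the set-theoretic pushforward of the measure; your computation silently treats $g_\#$ as a plain measure pushforward, which is legitimate precisely because $G$ acts by isometries, so $J_Sg\equiv 1$. A one-line remark to that effect would make the argument airtight.
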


\begin{proposition}[$G$-equivariant pushing forward]\label{Prop-G-push forward}
	Let $\Phi$ be a $G$-equivariant diffeomorphism of $M$,
	then for $T\in {\bf I}_k^G(M)$ and $V\in\mathcal{V}^G_k(M)$, the pushing forward  $\Phi_\#T \in {\bf I}_{k}^G(M)$ and $\Phi_\#V\in\mathcal{V}^G_k(M)$.
\end{proposition}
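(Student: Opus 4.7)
The plan is to reduce the statement to the functoriality of pushforward together with the hypothesis of $G$-equivariance. By the definition of a $G$-map listed in the bullet points, $\Phi$ being $G$-equivariant means $g^{-1} \circ \Phi \circ g = \Phi$ for every $g \in G$, equivalently $\Phi \circ g = g \circ \Phi$ as smooth diffeomorphisms of $M$. Since pushforward of currents and of varifolds is contravariantly functorial in the usual sense, this identity lifts to
\[
g_\# \circ \Phi_\# = (g \circ \Phi)_\# = (\Phi \circ g)_\# = \Phi_\# \circ g_\#
\]
as operators on the respective spaces.

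Given this commutation, I would verify the two conclusions in parallel. For $T \in \mathbf{I}_k^G(M)$, the $G$-invariance $g_\# T = T$ yields
\[
g_\#(\Phi_\# T) = \Phi_\#(g_\# T) = \Phi_\# T
\]
for every $g \in G$, so $\Phi_\# T$ is $G$-invariant. The exact same chain of equalities with $T$ replaced by $V \in \mathcal{V}_k^G(M)$ shows that $\Phi_\# V$ is a $G$-invariant varifold. It then remains to check that $\Phi_\# T$ and $\Phi_\# V$ lie in the correct ambient classes; but since $\Phi$ is a smooth diffeomorphism of $M$ (in particular bi-Lipschitz), pushforward preserves integrality of currents (\cite[4.1.14, 4.1.24]{federer}) and rectifiability/integrality of varifolds, and the support is sent into $M$. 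Hence $\Phi_\# T \in \mathbf{I}_k^G(M)$ and $\Phi_\# V \in \mathcal{V}_k^G(M)$.

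There is essentially no obstacle: the statement is a formal consequence of the defining property of $G$-equivariance combined with the functoriality of pushforward. The only point that deserves attention is making sure the definition of $G$-equivariance used in the paper (namely $g^{-1}\circ \Phi\circ g=\Phi$) is translated correctly into the commutation $\Phi\circ g=g\circ \Phi$ before invoking functoriality; once this is in place, both assertions follow by the one-line computation above.
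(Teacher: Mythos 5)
Your proof is correct and is the standard argument; the paper states this proposition without proof (it is collected from \cite{WTR} and \cite{Liu}), and the argument there is exactly yours: translate $g^{-1}\circ\Phi\circ g=\Phi$ into $\Phi\circ g=g\circ\Phi$, apply functoriality of pushforward to get $g_\#\Phi_\#=\Phi_\# g_\#$, and note that pushforward by a diffeomorphism preserves the classes $\mathbf{I}_k(M)$ and $\mathcal{V}_k(M)$. One trivial slip: pushforward is \emph{covariantly} functorial, $(f\circ g)_\#=f_\#\circ g_\#$, which is precisely the identity you use, so only the word ``contravariantly'' is off.
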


\begin{proposition}[$G$-invariant slice]\label{slice}
	Let $f$ be a $G$-invariant Lipschitz function on $M$ and $T\in {\bf I}_k^G(M)$, then for almost all $t\in\mathbb{R}$, the slice of $T$ by $f$ at $t$ exists and $\langle T,f,t\rangle \in {\bf I}_{k-1}^G(M)$.
\end{proposition}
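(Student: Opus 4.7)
The existence of the slice $\langle T,f,t\rangle$ as an element of ${\bf I}_{k-1}(M)$ for almost every $t\in\R$ is the classical Federer slicing theorem \cite[4.3.1]{federer}, applied to the Lipschitz function $f$ (one only needs $\M(T)+\M(\partial T)<\infty$, which holds since $T\in{\bf I}_k^G(M)\subset{\bf I}_k(M)$). So the only content to verify is that the slice is $G$-invariant, i.e.\ $g_\#\langle T,f,t\rangle=\langle T,f,t\rangle$ for every $g\in G$, for almost every $t$.

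The plan is to read off the $G$-invariance directly from the identity
\begin{equation*}
\langle T,f,t\rangle \;=\; \partial\bigl(T\llcorner\{f<t\}\bigr)\;-\;(\partial T)\llcorner\{f<t\},
\end{equation*}
which holds for almost every $t$ (namely those $t$ for which the Radon measures $\|T\|$ and $\|\partial T\|$ give zero mass to the level set $\{f=t\}$). Fix such a $t$ and any $g\in G$. Because $f$ is $G$-invariant, the sublevel set $E_t=\{f<t\}$ satisfies $g(E_t)=E_t$. Combining this with the general formula $g_\#(S\llcorner A)=(g_\#S)\llcorner g(A)$ and with the $G$-invariance $g_\# T=T$ and $g_\#(\partial T)=\partial(g_\# T)=\partial T$, we get
\begin{equation*}
g_\#\bigl(T\llcorner E_t\bigr)=T\llcorner E_t,\qquad g_\#\bigl((\partial T)\llcorner E_t\bigr)=(\partial T)\llcorner E_t.
\end{equation*}
Since $\partial$ commutes with $g_\#$, applying $g_\#$ to the displayed slice formula gives $g_\#\langle T,f,t\rangle=\langle T,f,t\rangle$, as desired. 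Note that the exceptional set of $t$ is independent of $g$, so the invariance holds simultaneously for all $g\in G$ on a full-measure set of levels.

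The only step that is not entirely routine is verifying the slice formula and ensuring the restriction $T\llcorner E_t$ defines an integral current with finite mass for almost every $t$; both are standard consequences of the coarea-type inequality $\int \M(T\llcorner f^{-1}(t))\,dt\leq \mathrm{Lip}(f)\M(T)$ and its analog for $\partial T$. Once this is in place, the $G$-invariance is essentially tautological. Thus the ``hard part'' here is really just citing the right identities from Federer and observing that $G$-invariance of $f$ makes every sublevel set a $G$-set, so Proposition \ref{restrict} (restriction to $G$-sets) propagates $G$-invariance through the slicing construction.
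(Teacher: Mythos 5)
Your proof is correct. The paper does not actually supply its own proof of this proposition (it simply cites \cite{WTR} and \cite{Liu}), and your argument — Federer's slicing theorem for existence, plus the identity $\langle T,f,t\rangle=\partial\bigl(T\llcorner\{f<t\}\bigr)-(\partial T)\llcorner\{f<t\}$ combined with the $G$-invariance of the sublevel sets $\{f<t\}$ and the commutation of $g_\#$ with $\partial$ and with restriction — is exactly the standard route taken in those references, with the relevant technical points (finite mass of $T\llcorner\{f<t\}$ for a.e.\ $t$, and the $g$-independence of the exceptional set) properly addressed.
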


\begin{proposition}[$G$-invariant Isoperimetric Lemma]\label{isoperimetric}
	There exist positive constants $\nu_M$ such that for any $T_1,T_2\in {\mathcal Z}_n^G(M)$ with
$$\mathcal{F}(T_1-T_2)<\nu_M $$
there exists a unique $Q\in {\bf I}_{n+1}^G(M)$ such that:
\[ \partial Q = T_1-T_2, \quad \text{} {\bf M}(Q) = \mathcal{F}(T_1-T_2). \]	
\end{proposition}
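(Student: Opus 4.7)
The plan is to reduce the $G$-equivariant statement to the classical (non-equivariant) isoperimetric lemma for integral currents on a closed oriented Riemannian manifold, combined with a uniqueness argument in top dimension. Recall that the classical version furnishes a constant $\nu_M^0>0$ such that any pair $T_1,T_2\in\mathcal{Z}_n(M)$ with $\F(T_1-T_2)<\nu_M^0$ admits a filling $Q\in\bI_{n+1}(M)$ with $\partial Q=T_1-T_2$ and $\M(Q)=\F(T_1-T_2)$. I would set $\nu_M:=\min\{\nu_M^0,\vol(M)/3\}$ and, given $G$-invariant cycles $T_1,T_2\in\mathcal{Z}_n^G(M)$ satisfying $\F(T_1-T_2)<\nu_M$, produce such a $Q\in\bI_{n+1}(M)$ via the classical lemma.

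The main task is then to show that this $Q$ is automatically $G$-invariant. For each $g\in G$, the pushforward $g_\#Q\in\bI_{n+1}(M)$ satisfies $\partial(g_\#Q)=g_\#(T_1-T_2)=T_1-T_2$, since $T_1,T_2\in\mathcal{Z}_n^G(M)$; and $\M(g_\#Q)=\M(Q)=\F(T_1-T_2)$, since $g$ acts by isometries. Hence $g_\#Q$ is again a mass-minimizing filling. I would now show that such a filling is unique: if $Q_1,Q_2\in\bI_{n+1}(M)$ both have boundary $T_1-T_2$ and mass $\F(T_1-T_2)$, then $Q_1-Q_2\in\mathcal{Z}_{n+1}(M)$ is a top-dimensional integral cycle on the closed, connected, oriented manifold $M$, so by the constancy theorem $Q_1-Q_2=k[[M]]$ for some $k\in\mZ$, with $\M(Q_1-Q_2)=|k|\vol(M)$. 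On the other hand $\M(Q_1-Q_2)\le \M(Q_1)+\M(Q_2)=2\F(T_1-T_2)<2\nu_M<\vol(M)$, forcing $k=0$ and $Q_1=Q_2$. Applying this uniqueness with $Q_1=Q$ and $Q_2=g_\#Q$ yields $g_\#Q=Q$ for every $g\in G$, i.e.\ $Q\in\bI_{n+1}^G(M)$. The uniqueness clause in the statement is just a specialization of the same argument to $\bI_{n+1}^G(M)\subset\bI_{n+1}(M)$.

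The conceptually delicate point — the closest thing to an obstacle — is that one should \emph{not} attempt to $G$-invariantize $Q$ by the averaging trick $\int_G g_\#Q\,d\mu(g)$ used elsewhere in the paper, since averaging over a positive-dimensional Lie group need not preserve integrality of a current. The correct substitute is the uniqueness of the top-dimensional isoperimetric filling provided by the constancy theorem, which lets $G$-invariance propagate from the boundary data to the filling itself, at the mild cost of shrinking $\nu_M$ below $\vol(M)/2$.
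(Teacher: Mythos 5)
Your proof is correct and is essentially the argument behind the paper's statement: the paper does not prove this proposition but imports it from \cite{WTR} and \cite{Liu}, where the $G$-invariance of $Q$ is obtained exactly as you do — by noting that $g_\#Q$ is another mass-minimizing filling of the $G$-invariant boundary $T_1-T_2$ and invoking the uniqueness of the isoperimetric filling (via the constancy theorem in top dimension, with $\nu_M$ taken small relative to $\vol(M)$). Your remark that naive averaging would destroy integrality, so uniqueness is the right mechanism, is also the correct reading of why the argument is structured this way.
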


\begin{remark}\label{G-v-closed}
	An important case for $G$-equivariant diffeomorphisms is the one-parameter group of diffeomorphisms generated by $X\in \mathfrak{X}^G(M)$. A simple example for $G$-invariant slice is taking $f$ to be the distant functions to orbits ${\rm dist}(G\cdot x,\cdot)$.
\end{remark}

\begin{definition}[$(G,c)$-bounded first variation]\label{Def-bounded first variation}
	Given $c>0$, a varifold $V\in \V^G_k(M)$ is said to have {\em $(G,c)$-bounded first variation in a $G$-open subset $U\subset M$}, if
\[ |\de V(X)|\leq c \int_M|X|d\mu_V, \quad \text{for any } X\in \mathfrak{X}^G(U). \]
\end{definition}

\medskip
In \cite[Lemma 2.2]{Liu}, Z. Liu has shown the equivalence between $G$-stationary and stationary for $G$-varifolds.  We notice that Liu's arguments also imply the equivalence between the $(G,c)$-bounded first variation and $c$-bounded first variation for $G$-varifolds.

\begin{lemma}\label{$c$-bounded first variation}
 For any $V\in \V^G_k(M)$, and $G$-open subset $U\subset M$, then $V$ has $(G,c)$-bounded first variation in $U$ if and only if $V$ has $c$-bounded first variation in $U$.
\end{lemma}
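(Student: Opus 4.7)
The forward implication is essentially trivial: since $\mathfrak{X}^G(U)\subset\mathfrak{X}(U)$, $c$-bounded first variation in $U$ immediately gives $(G,c)$-bounded first variation. So the real content is the converse, for which I would use the standard averaging argument for $G$-invariant objects.

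Given any $X\in\mathfrak{X}(U)$, define the averaged vector field
\begin{equation*}
\bar{X}(p)=\int_G (g^{*}X)(p)\,d\mu(g)=\int_G dg^{-1}_{g\cdot p}\bigl(X(g\cdot p)\bigr)\,d\mu(g).
\end{equation*}
Since $\mu$ is bi-invariant, a change of variables shows that $\bar{X}$ is $G$-invariant, i.e.\ $\bar{X}\in\mathfrak{X}^G(M)$. Because $U$ is a $G$-open set (so $g^{-1}(U)=U$) and $X$ is supported in $U$, each $g^{*}X$ is also supported in $U$; hence $\bar{X}\in\mathfrak{X}^G(U)$.

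Next I would exploit the $G$-invariance of $V$. For every $g\in G$, the diffeomorphism invariance of the first variation gives $\delta V(g^{*}X)=\delta(g_\#V)(X)=\delta V(X)$. Integrating over $G$ and using the linearity of $\delta V$ together with Fubini (which is justified by the smoothness and compact support of $X$),
\begin{equation*}
\delta V(X)=\int_G \delta V(g^{*}X)\,d\mu(g)=\delta V\!\left(\int_G g^{*}X\,d\mu(g)\right)=\delta V(\bar{X}).
\end{equation*}
Applying the assumed $(G,c)$-bounded first variation to $\bar{X}\in\mathfrak{X}^G(U)$ yields $|\delta V(X)|=|\delta V(\bar{X})|\leq c\int_M|\bar{X}|\,d\mu_V$.

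It remains to compare $\int|\bar{X}|\,d\mu_V$ with $\int|X|\,d\mu_V$. Pointwise, $|\bar{X}(p)|\leq\int_G|(g^{*}X)(p)|\,d\mu(g)$. Since $g$ acts as an isometry, $|(g^{*}X)(p)|=|X(g\cdot p)|$; since $V$ is $G$-invariant, so is the weight measure $\mu_V$, which gives $\int_M|X(g\cdot p)|\,d\mu_V(p)=\int_M|X|\,d\mu_V$ for every $g$. Fubini then yields
\begin{equation*}
\int_M|\bar{X}|\,d\mu_V\leq\int_G\int_M|X(g\cdot p)|\,d\mu_V(p)\,d\mu(g)=\int_M|X|\,d\mu_V,
\end{equation*}
which combines with the previous bound to give $|\delta V(X)|\leq c\int_M|X|\,d\mu_V$ for all $X\in\mathfrak{X}(U)$, as required. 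I do not foresee any serious obstacle: the whole argument is a clean application of the Haar averaging trick, and the only detail worth checking carefully is that the averaging preserves the support condition, which holds because $U$ is assumed $G$-open.
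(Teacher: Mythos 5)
Your proof is correct and follows essentially the same route as the paper: the paper handles the converse by citing \cite[Lemma 2.2]{Liu} for the existence of an averaged $G$-vector field $X_G$ supported in $U$ with $\de V(X)=\de V(X_G)$ and controlled norm, and then integrates exactly as you do. You have simply written out the Haar-averaging construction behind that cited lemma explicitly (replacing its pointwise norm bound with the integrated bound via $G$-invariance of $\|V\|$ and Fubini, which suffices), so there is no substantive difference.
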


\begin{proof}
	If $V$ has $c$-bounded first variation in $U$, it's clear that $V$ has $(G,c)$-bounded first variation in $U$ by definitions.
	Suppose now $V$ has $(G,c)$-bounded first variation in $U$.
It follows from \cite[Lemma 2.2]{Liu} that for any vector field $X\in\X(M)$, with $|X|\leq 1$ supported in $U$, there exists a $G$-vector field $X_G$ supported in $U$, with $|X_G|\leq |X|$ so that,
\[ \de V(X)=\de V(X_G). \]

Since $V$ has $(G,c)$-bounded first variation in $U$, we have
\[|\de V(X_G)|\leq c \int_M|X_G|d\mu_V\leq c \int_M|X|d\mu_V. \]
Hence
\[|\de V(X)|=|\de V(X_G)|\leq c \int_M|X|d\mu_V. \]
This implies that
	$V$ has $c$-bounded first variation in $U$.
\end{proof}

\subsection{Compactness of $G$-stable CMC $G$-hypersurfaces}
\label{SS:Compactness of $G$-stable CMC hypersurfaces}{~}

\begin{definition}[$G$-stable $(G,c)$-hypersurface]\label{D:G-stable c-hypersurface}
Let $\Si$ be a smooth, immersed, two-sided,  hypersurface with unit normal vector $\vec{\nu}$. We say that $\Si$ is a {\em $G$-stable $(G,c)$-hypersurface} in an open $G$-subset $U\subset M$ if
\begin{itemize}
\item  $\Si$ is  $G$-invariant;
\item  {the mean curvature $H$ of $\Si\cap U$ with respect to $\vec{\nu}$ equals to $c$; }
\item  $\Rom{2}_\Si(X, X)\geq 0$ { for all $G$-invariant vector field $X\in\mathfrak{X}^{G,\bot}(\Si\cap U)$}.
\end{itemize}
\end{definition}

In the above definition, the prefix `$G$-' of `stable' means that only $G$-invariant vector fields are considered for variations.
While the prefix `$G$-' of `hypersurface' means the hypersurface is invariant under the action of $G$.
Clearly, the prefix `$c$-' of `hypersurface' implies that the hypersurface has constant mean curvature which equals $c$ with respect to the given unit normal vector field.
With these in mind, one can similarly define stable $(G,c)$-hypersurfaces as well as stable $c$-hypersurfaces.

The following result about the equivalence between the stability and $G$-stability can be found in \cite[Lemma 2.9]{WTR}. T.R Wang has shown the equivalence between the $G$-stability and stability for minimal $G$-hypersurfaces of boundary type. We notice that the second variation formula for $\Ac$ functional is the same as the second variation formula for $Area$ functional. This implies the equivalence between the stability and $G$-stability of boundary type $G$-invariant $c$-hypersurfaces.
\begin{lemma}\label{G-stable}
	Let $U\subset M$ be an open $G$-subset, and $\Sigma=\partial \Om$ the boundary of $\Om \in\C^G(U)$.
	If $\Si$ is a {\em $c$-hypersurface},
	then $\Sigma$ is $G$-stable if and only if it is stable.
\end{lemma}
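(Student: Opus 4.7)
The plan is to reduce the equivalence to the already-established minimal case \cite[Lemma 2.9]{WTR}. The forward direction (stability $\Rightarrow$ $G$-stability) is immediate since the class of $G$-invariant normal variations sits inside the class of all normal variations. For the converse, the first key observation is that the second variation formula (\ref{E: 2nd variation for Ac}) for $\Ac$ along $X=\varphi\vec\nu$ contains no residual $c$- or $H$-dependent terms, so it reduces to the Jacobi quadratic form
\[\Rom{2}_\Si(X,X)=\int_\Si \varphi\, L\varphi\, d\mu_\Si,\qquad L=-\De_\Si-(Ric^M(\vec\nu,\vec\nu)+|A^\Si|^2),\]
and $L$ is exactly the Jacobi operator for the minimal-surface case. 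Therefore the spectral problem controlling $(G,c)$-stability is identical to the one treated in \cite{WTR}.

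Next I would unwind the dictionary between $G$-invariant normal variations and $G$-invariant scalar functions on $\Si$. Because $\Om\in\C^G(U)$, the reduced boundary $\Si=\partial\Om$ is $G$-invariant, and the outward unit normal $\vec\nu$ is $G$-equivariant, i.e.\ $dg(\vec\nu(p))=\vec\nu(g\cdot p)$ for every $g\in G$. Consequently $X=\varphi\vec\nu$ lies in $\mathfrak{X}^{G,\bot}(\Si\cap U)$ if and only if $\varphi\in C^\infty_c(\Si\cap U)$ is $G$-invariant. The $G$-stability hypothesis then says $\int_\Si\varphi\, L\varphi\, d\mu_\Si\geq0$ for all $G$-invariant $\varphi$ with compact support in $\Si\cap U$, and stability is the same inequality with the $G$-invariance dropped.

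Finally I would invoke the ground-state argument underlying \cite[Lemma 2.9]{WTR}: the Jacobi operator $L$ commutes with the $G$-action (since $G$ acts by isometries preserving $\Si$ and $\vec\nu$ is $G$-equivariant), so on each connected component of $\Si\cap U$ it has a positive first eigenfunction, unique up to scalar. Since $g_*$ carries any positive ground state to another positive ground state with the same $L^2$ norm, uniqueness forces this eigenfunction to be $G$-invariant; hence the infimum of the Rayleigh quotient of $L$ over $G$-invariant test functions equals the unrestricted infimum, yielding the equivalence. The main obstacle is essentially bookkeeping rather than substance: one must verify $G$-equivariance of $\vec\nu$, handle the support/boundary conditions on $\Si\cap U$ when $U\neq M$, and address connected components that $G$ may permute (in which case one symmetrizes across the $G$-orbit of components). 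Because the Jacobi operator literally coincides with the minimal-surface one, Wang's spectral argument transfers essentially word-for-word.
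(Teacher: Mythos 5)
Your proposal is correct and follows essentially the same route as the paper, which simply observes that the second variation of $\Ac$ at a $c$-hypersurface coincides with the Jacobi form of the area functional and then transfers \cite[Lemma 2.9]{WTR}. You additionally spell out the underlying mechanism (the $G$-equivariance of $\vec{\nu}$ for boundary-type hypersurfaces and the simplicity/positivity of the first eigenfunction forcing it to be $G$-invariant), which is consistent with, and more detailed than, the paper's two-line justification.
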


\medskip
Noting that the Lie group $G$ is not assumed to be connected, a connected component of a $G$-hypersurface may not be $G$-invariant.
Hence, we introduce the following $G$-invariant analogs of the usual notions of connectivity.
\begin{definition}[$G$-connectivity]\label{D:connected}
	Suppose $\Sigma$ is a $G$-hypersurface, and $\{\Sigma_i\}_{i=1}^k$ are connected components of $\Sigma$.
	We say $\Sigma$ is {\em $G$-connected}, if for any $i,j\in\{1,\dots,k\}$, there exists $g\in G$ such that $\Sigma_i=g\cdot\Sigma_j$.
	
	Similarly, we say a $G$-set $U$ is {\em $G$-connected} if for any two connected components of $U$ there exists $g\in G$ as a diffeomorphism between them.
\end{definition}

\medskip
Since we are dealing with $G$-invariant hypersurfaces, we need to make few analogs of the definitions in \cite[Section 2.1]{zhou and zhu}.

\begin{definition}
	Let $\Sigma_i$, $i=1,2$, be connected embedded two-sided hypersurfaces in a connected open set $U\subset M$, with $\partial \Sigma_i\cap U=\emptyset$ and unit normals $\vec{\nu_i}$.
	We say that {\em $\Sigma_2$ lies on one side of $\Sigma_1$ in $U$} if $\Sigma_1$ divides $U$ into two connected components $U_1\cup U_2=U\setminus \Sigma_1$, where $\vec{\nu_1}$ points into $U_1$, and either:
	\begin{itemize}
		\item $\Sigma_2\subset \Clos(U_1)$, which we write as $\Sigma_1\leq\Sigma_2$ or that $\Sigma_2$ lies on the positive side of $\Sigma_1$; or
		\item $\Sigma_2\subset \Clos(U_2)$, which we write as $\Sigma_1\geq\Sigma_2$ or that $\Sigma_2$ lies on the negative side of $\Sigma_1$.
	\end{itemize}
\end{definition}

A manifold $N$ is said to be a smooth $G$-manifold if $N$ is a smooth manifold with $G$ acts on it smoothly.

\begin{definition}[$G$-equivariant almost embedding]\label{D:almost embedding}
	Let $U\subset M^{n+1}$ be an open $G$-set, and $\Sigma'$ be a $n$-dimensional smooth $G$-manifold.
	A smooth $G$-equivariant immersion $\phi:\Sigma'\to U$ is said to be a {\em $G$-equivariant almost embedding} if at any point $p\in \phi(\Sigma')$ where $\phi(\Sigma')$ fails to be embedded, there exists a small $G$-connected $G$-neighborhood $W\subset U$ of $p$, such that
	\begin{itemize}
		\item $\Sigma'\cap \phi^{-1}(W)$ is a disjoint union of connected components $\cup_{i=1}^l \Sigma_i'$;
		\item $\phi(\Sigma_i')$ is an embedding for each $i=1,\dots,l$;
		\item for each $i$, suppose $\phi(\Si_i')\subset W_i$ where $W_i$ is a connected component of $W$, then any other component $\phi(\Sigma_j')$, $j\neq i$, either lies on one side of $\phi(\Sigma_i')$ in $W_i$, or contains in a different connected component of $W$.
	\end{itemize}
\end{definition}
	We will denote $\phi(\Si')$ and $\phi(\Si_i')$ by $\Sigma$ and $\Si_i$ for simplicity.
	The subset of points in $\Sigma$ where $\Sigma$ fails to be embedded will be called the {\em touching set}, and denoted by $\mS(\Si)$.
	We also call $\Si\setminus\mS(\Si)$ the regular set, and denote it by $\mR(\Si)$.
	We also denote $\mS(\Si')=\phi^{-1}(\mS(\Si)),~\mR(\Si')=\phi^{-1}(\mR(\Si)) $.

Since $\phi$ is $G$-equivariant, it's clear that $\Sigma'\cap\phi^{-1}(W)=\cup_{i=1}^l\Sigma_i'$ is $G$-invariant.
But generally, we do not know whether each $\Sigma_i'$ is $G$-invariant or not.
For example, consider the $G=\mZ_2$ action on $\R^2$ as $e\cdot (x,y)=(x,y), g\cdot (x,y)=(-x,y)$.
Define $\Sigma_i=\Si_i'=(S^1+((-1)^i,0))$ to be a unit $1$-sphere with center point $(-1,0)$ or $(1,0)$.
Then $\Sigma=(S^1+(1,0))\cup (S^1+(-1,0))$ is an equivariant almost embedded $G$-hypersurface with one touching point $\mS(\Sigma)=\{(0,0)\}$.
But each $\Sigma_i'$ is not $G$-invariant since $g\cdot \Sigma_i'=\Sigma_j',~i\neq j \in\{1,2\}$.

Moreover, since $W$ is $G$-connected, we can decompose it by connected components as $\cup_{\alpha=1}^\iota W_\alpha$ such that there exists $g_{\alpha,\beta}\in G$ with $g_{\alpha,\beta}\cdot W_\beta=W_\alpha$.
Thus $\{\Sigma_i'\}_{i=1}^l$ can be classified by which component $W_\alpha$ contains its image.
We write $\{ \Si_{\alpha,i} ' \}_{i=1}^{l_\alpha}$ to be all the $\Sigma_j'$ such that $\Si_j=\phi(\Si_j')$ is contained in $W_\alpha$.
Hence $\{ \Si_{\alpha,i} ' \}_{i=1}^{l_\alpha}=g_{\alpha,\beta}\cdot \{ \Si_{\beta,i} ' \}_{i=1}^{l_\beta}$ and $l_{\alpha}=l_{\beta}$.
After choosing the component $W_\alpha$ containing the touching point $p$, it's clear by definition that the term `$G$-equivariant almost embedded' implies `almost embedded and $G$-invariant' (see \cite[Definition 2.3]{zhou and zhu} for the definition of almost embedding).

On the other hand, we can also use the slices of orbits to make the definition of $G$-equivariant almost embedding more localized.
To be exact, let $\sB_p$ to be a slice of $G\cdot p$ at $p$ in $M$ (see \cite[Section 3.3]{wall}), and $G_p=\{g\in G : g\cdot p=p\}$ be the isotropy group of $p$.

\begin{definition}[equivariant almost embedding in slices]
 A smooth $G$-equivariant immersion $\phi:\Sigma'\to U\subset M$ is said to be an {\it equivariant almost embedding in slices} if for any $p\in U$, $\phi:\Sigma'\cap \phi^{-1}(\sB_p)\to \sB_p$ is a $G_p$-equivariant almost embedding.
\end{definition}
 		
For any $G$-invariant hypersurface $\Sigma$, using the transversality of slices and orbits, we can show that $\Sigma$ is $G$-equivariant almost embedded if and only if it is equivariant almost embedded in slices.

Furthermore, we claim that the term `$G$-equivariant almost embedded' is equivalent to the term `almost embedded and $G$-invariant'.
Indeed, suppose $\phi:\Sigma'\to M$ is an almost embedding in the sense of \cite[Definition 2.3]{zhou and zhu} with $\Sigma=\phi(\Si')$ being $G$-invariant.
We are going to show that there is a $G$-structure on $\Sigma'$ so that $\phi$ is $G$-equivariant, and $\phi$ is an equivariant almost embedding in slices, which implies $\phi$ is a $G$-equivariant almost embedding by arguments in the previous paragraph.
Since $G$ acts on $M$ by isometries and $\Sigma$ is $G$-invariant, the regular set $\mR(\Si)$ as well as the touching set $\mS(\Si)$ is $G$-invariant, i.e. $p\in \mS(\Sigma)$(or $\mR(\Sigma)$) $ \Leftrightarrow$ $G\cdot p \subset \mS(\Sigma)$(or $\mR(\Sigma)$).
Thus we can define the action of $G$ on $\mR(\Si')=\phi^{-1}(\mR(\Si))$ by
$$g\cdot x=g\cdot \phi^{-1}(q)=\phi^{-1}(g\cdot q), \quad \forall q\in \mR(\Si),~ x=\phi^{-1}(q)\in\mR(\Si'),$$
since $\phi\vert_{\mR(\Si')}$ is embedding.
As for the action of $G$ on $\mS(\Si')=\phi^{-1}(\mS(\Si))$, we first take $p\in\mS(\Si)$ and use the definition \cite[Definition 2.3]{zhou and zhu} to get a connected open neighborhood $\wti{W}$ of $p$ which satisfies
\begin{itemize}
	\item $\Sigma'\cap \phi^{-1}(\wti{W})$ is a disjoint union of connected components $\cup_{i=1}^l \Sigma_i'$;
	\item $\phi(\Sigma_i')$ is an embedding for each $i=1,\dots,l$;
	\item for each $i$, any other component $\phi(\Sigma_j')$, $j\neq i$, lies on one side of $\phi(\Sigma_i')$ in $\wti{W}$.
\end{itemize}
Then we take the slice $\sB_p$ of $G\cdot p$ at $p$ in $\wti{W}$ and consider the restriction $\phi\vert_{\sB_p'}:\Sigma'\cap \phi^{-1}(\sB_p)\to \sB_p$, where $\sB_p'=\phi^{-1}(\sB_p)$.
By the transversality of slices and orbits, we can show that:
\begin{itemize}
	\item[(i)] $\Sigma'\cap \phi^{-1}(\sB_p)$ is a disjoint union of connected components $\cup_{i=1}^l \Sigma_{p,i}'$ where $\Sigma_{p,i}'= (\phi\vert_{\Sigma_i'})^{-1}(\phi(\Si_i')\cap\sB_p)$;
	\item[(ii)] $\phi(\Sigma_{p,i}')$ is an embedding in the slice $\sB_p$ for each $i=1,\dots,l$;
	\item[(iii)] for each $i$, any other component $\phi(\Sigma_{p,j}')$, $j\neq i$, lies on one side of $\phi(\Sigma_{p,i}')$ in $\sB_p$.
\end{itemize}
For any $g\in G$, denote $\sB_{g\cdot p}=g\cdot \sB_p$, $\Si_{p,i}=\phi(\Si_{p,i}')$, $\Si_{g\cdot p, i}=g\cdot \Si_{p,i}$.
Thus we have $\sB_{g\cdot p}$ is a slice of $G\cdot p $ at $g\cdot p$, and $\Si_{g\cdot p, i}\subset \sB_{g\cdot p}$ is connected embedded (since $g$ is an isometry).
Furthermore, since $\Sigma$ is $G$-invariant, it's clear that $\{ \Si_{g\cdot p,i}'=\phi^{-1}(\Si_{g\cdot p,i})\}_{i=1}^l$ satisfies the properties (i)(ii)(iii) with $\sB_{g\cdot p}$ in place of $\sB_p$.
Then for any $x\in \phi^{-1}(p)\subset\mS(\Si')$, there exists $i_x\in\{1,\dots,l\}$ so that $x\in \Si_{p,i_x}'$.
We can now define $g\cdot x=(\phi\vert_{\Si_{g\cdot p,i_x}'})^{-1}(g\cdot p) $.
This gives a $G$-structure on $\Sigma'$ which makes $\phi$ to be $G$-equivariant.
The above argument also implies that $\phi$ is an equivariant almost embedding in slices, and thus $G$-equivariant almost embedding.

\medskip
Finally, we want to point out that if the Lie group $G$ is connected then each $\Si_i=\phi(\Si_i')$ must be $G$-invariant.
Indeed, for any $g\in G$, there exists a curve $g(t):[0,1]\to G$ with $g(0)=e,~g(1)=g$ by the connectivity of $G$.
Since $\Sigma'\cap \phi^{-1}(W)=\cup_{i=1}^l\Si_i'$ is $G$-invariant, we have $g\cdot \Si_j'\subset \cup_{i=1}^l\Si_i',~\forall j\in\{1,\dots,l\}$.
However, for any $x\in\Si_j'$, there is a curve $\gamma(t)=g(t)\cdot x$ from $x$ to $g\cdot x$ which implies $g\cdot \Si_j'\subset \Si_j'$ since $\Si_j'$ is a connected component.
It means that $g\cdot \Si_j'=\Si_j'$ for any $g\in G$, $j\in\{1,\dots,l\}$. We have that $\Si_i=\phi(\Si_i')$ is $G$-invariant, since $\phi:\Sigma'\to U$ is $G$-equivariant and each $\Si_i'$ is $G$-invariant.

We now summarize these properties of $G$-almost embedding in the following remark.

\begin{remark}\label{R:almost embedded}
	Suppose $\Sigma$ is an immersed hypersurface in an open $G$-set $U\subset M$.
	\begin{itemize}
		\item[(i)] $\Sigma$ is $G$-equivariant almost embedded in $U$ if and only if $\Sigma$ is an almost embedded $G$-hypersurface in $U$.
		\item[(ii)] If $\Sigma$ is $G$-equivariant almost embedded, then the collection of components $\Sigma_i$ meet tangentially along $\mS(\Si)$ by \cite[Remark 2.4]{zhou and zhu}.
		\item[(iii)] If $\Sigma$ is $G$-equivariant almost embedded, then the touching set $\mS(\Sigma)$ as well as the regular set $\mR(\Sigma)$ of $\Sigma$ is a $G$-set, which implies that if $p\in \mS(\Sigma)$(or $\mR(\Sigma)$) then $G\cdot p \subset \mS(\Sigma)$(or $\mR(\Sigma)$) by the definition.
		\item[(iv)] $\Sigma$ is $G$-equivariant almost embedded if and only if it is equivariant almost embedded in slices.
		\item[(v)] If the Lie group $G$ is connected, then each $\Si_i'$ as well as $\Si_i$ is $G$-invariant.
	\end{itemize}	
\end{remark}

\begin{definition}[$G$-equivariant almost embedded $(G,c)$-boundary]
	Suppose $U\subset M$ is an open $G$-set.
	\begin{itemize}
		\item[(1)]  A $G$-equivariant almost embedded $G$-hypersurface $\Sigma\subset U$ is said to be a {\em $G$-equivariant almost embedded $G$-boundary} if there is an open $G$-set $\Om\in\C^G(U)$ such that $\Sigma$ is equal to the boundary of $\partial \Omega$ in the sense of currents.
		\item[(2)] The outer unit normal $\vec{\nu}_\Sigma$ of $\Si$ is the choice of the unit normal of $\Sigma$ which points outside of $\Omega$ along the regular part $\mR(\Si)$.
		\item[(3)] $\Sigma$ is called a {\em stable $(G,c)$-boundary} if $\Sigma$ is a $G$-boundary as well as a stable immersed $(G,c)$-hypersurface.
	\end{itemize}
\end{definition}

\begin{remark}\label{R:G-stable}
	By Lemma \ref{G-stable}, $\Sigma$ is a stable $(G,c)$-boundary if and only if $\Sigma$ is $G$-stable $(G,c)$-boundary. 	
\end{remark}

We will need the following compactness theorem for $G$-stable $(G,c)$-hypersurfaces which is essentially due to Xin Zhou and Jonathan J. Zhu \cite[Theorem 2.11]{zhou and zhu}.
\begin{theorem}[Compactness theorem for $G$-stable $(G,c)$-boundary]
\label{T:compactness}
	Let $2\leq n\leq 6$. Suppose $\Si_k\subset U$ is a sequence of smooth, $G$-equivariant almost embedded, $G$-stable, $(G,c_k)$-boundaries in an open $G$-set $U$, with $\sup_{k} \Area(\Sigma_k) < \infty$ and $\sup_k c_k <\infty$. Then the following hold:
\begin{itemize}
  \item[(i)]
	If $\inf_k c_k>0$, then up to a subsequence, $\{\Sigma_k\}$ converges locally smoothly to some $G$-equivariant almost embedded $G$-stable $(G,c)$-boundary $\Sigma_\infty$ in $U$, and the density of $\Sigma_\infty$ is $1$ along $\mR(\Si_\infty)$ and $2$ along $\mS(\Si_\infty)$.
  \item[(ii)] If $\inf_k c_k\to 0$, then up to a subsequence, $\{\Sigma_k\}$ converges locally smoothly (with multiplicity) to some smooth embedded stable $G$-invariant minimal hypersurface $\Sigma_\infty$ in U.
	\end{itemize}
\end{theorem}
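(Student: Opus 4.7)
\emph{Proof plan.} My plan is to reduce the statement to the non-equivariant compactness theorem \cite[Theorem 2.11]{zhou and zhu} by invoking the equivalences between $G$-equivariant and ordinary notions already developed in this section. By Remark \ref{R:almost embedded}(i), each $\Sigma_k$ is an almost embedded hypersurface in $U$ in the sense of \cite[Definition 2.3]{zhou and zhu} that happens to be $G$-invariant. Since each $\Sigma_k$ is a $G$-boundary with mean curvature $c_k$ on its regular set, and since Lemma \ref{G-stable} shows that the $G$-stability of $\Sigma_k$ is equivalent to its stability, each $\Sigma_k$ fits into the hypotheses of Zhou--Zhu's compactness theorem. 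Together with the uniform bounds $\sup_k \Area(\Sigma_k)<\infty$ and $\sup_k c_k<\infty$, this yields a subsequence converging locally smoothly to an almost embedded limit $\Sigma_\infty$ with the stated regularity, density and (in case (ii)) multiplicity properties: a stable $c$-boundary in case (i), and a smooth embedded stable minimal hypersurface with multiplicity in case (ii).

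The remaining work is to transfer the $G$-structure to the limit. Because every $g\in G$ acts as an isometry on $M$ and $g\cdot\Sigma_k=\Sigma_k$, the local smooth convergence commutes with the $G$-action, forcing $g\cdot \Sigma_\infty=\Sigma_\infty$ for every $g\in G$. Applying Remark \ref{R:almost embedded}(i) in the reverse direction then upgrades $\Sigma_\infty$ to a $G$-equivariant almost embedded $G$-hypersurface, and Lemma \ref{G-stable} upgrades its stability to $G$-stability. To preserve the $G$-boundary property, I would write $\Sigma_k=\partial\Omega_k$ with $\Omega_k\in\mathcal{C}^G(U)$; applying Proposition \ref{G-current compactness} to $\partial[[\Omega_k]]\in\mathcal{Z}_n^G(U)$, whose masses are uniformly bounded, one extracts a further subsequence with $[[\Omega_k]]\to [[\Omega_\infty]]$ and $\Omega_\infty\in\mathcal{C}^G(U)$. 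The local smooth convergence of $\Sigma_k$ then identifies $\partial\Omega_\infty$ with $\Sigma_\infty$ (accounting for multiplicity in case (ii) as in \cite{zhou and zhu}).

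The only place where the $G$-setting adds real content beyond this bookkeeping is the verification that the almost embedded structure of the limit is genuinely compatible with the $G$-action, i.e.\ that the touching set $\mathcal{S}(\Sigma_\infty)$ and regular set $\mathcal{R}(\Sigma_\infty)$ are themselves $G$-sets so that $\Sigma_\infty$ qualifies as a $G$-equivariant almost embedding rather than merely an almost embedding that is a $G$-set. This is the point guaranteed by Remark \ref{R:almost embedded}(iii), once the limit is known to be simultaneously almost embedded and $G$-invariant; the latter two properties have just been established. Hence the theorem reduces to \cite[Theorem 2.11]{zhou and zhu} with no further analytic input needed, and I expect the only care required in the write-up to be the standard multiplicity bookkeeping in case (ii), exactly as handled in the non-equivariant proof.
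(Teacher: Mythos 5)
Your proposal is correct and follows essentially the same route as the paper: reduce to \cite[Theorem 2.11]{zhou and zhu} via Remark \ref{R:almost embedded}(i) and Lemma \ref{G-stable} (i.e.\ Remark \ref{R:G-stable}), then recover the $G$-boundary structure of the limit by applying the compactness of $G$-currents to $\partial\Omega_k$ and identifying $\partial\Omega_\infty$ with $\Sigma_\infty$. The only cosmetic difference is that you additionally deduce $G$-invariance of $\Sigma_\infty$ directly from the commutation of each isometry $g$ with the local smooth convergence, whereas the paper obtains it from the $G$-current limit; both are fine.
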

\begin{proof}
	As we pointed out in Remark \ref{R:almost embedded}(i) and Remark \ref{R:G-stable}, each $\Sigma_k$ is an almost embedded stable $G$-invariant $c_k$-boundary.
	Hence, we can apply the compactness theorem \cite[Theorem 2.11(i)(ii)]{zhou and zhu} to get a subsequence converging locally smoothly to an almost embedded stable $c$-boundary $\Sigma_\infty$ satisfying the density requirements.
	By Remark \ref{R:almost embedded}(i), we only need to show that $\Sigma_\infty$ is $G$-boundary.
	
	Denote $\Sigma_k=\partial\Omega_k$ for some $\Omega_k\in\C^G(M)$.
	Combining Proposition \ref{G-current compactness}, \ref{G-varifold compactness} with the proof of \cite[Theorem 2.11(ii)]{zhou and zhu}, we have $\partial\Omega_k $ converges weakly as $G$-currents to some $\partial\Omega_\infty$ with $\Omega_\infty\in\C^G(M)$ and $\Sigma_\infty=\partial \Omega_\infty$ as $G$-varifold.
	Thus $\Sigma_\infty$ is $G$-invariant and $G$-equivariant almost embedded by Remark \ref{R:almost embedded}(i).
\end{proof}

\section{$G$-invariant min$-$max theory for CMC hypersurfaces}\label{CMC min-max}
\medskip

We will introduce the min-max construction using the scheme developed by Almgren and Pitts \cite{almgren}, \cite{almgren-varifolds}, \cite{pitts}. This section follows from \cite[Section 3]{zhou and zhu} with $\C^G(M)$ in place of $\C(M)$.

\subsection{Cell Complex}\label{Sec-sub-cubical-complex}{~}

For any $j\in \mathbb{N}$, we denote $I(1,j)$ to be the cell complex on $I=[0,1]$ with $1$-cells
$$[0,3^{-j}], [3^{-j},2 \cdot 3^{-j}],\dots,[1-3^{-j}, 1]$$
   and $0$-cells (vertices)
$$[0], [3^{-j}],\dots,[1-3^{-j}], [1].$$
Given a $1$-cell $\al\in I(1, j)_1$, and $k\in\N$, we will use the following notations:
\begin{itemize}
	\item $I(1, j)_p$: the set of all $p$-cells in $I(1,j)$;
	\item $I_0(1, j)_0$: the set $\{[0], [1]\}$;
	\item $\al(k)$: the sub-complex of $I(1, j+k)$ formed by all cells contained in $\al$;
\end{itemize}
We also utilize the following definitions:
\begin{itemize}
    \item  The boundary homeomorphism $\partial: I(1, j)\rightarrow I(1, j)$ is given by $\partial[a, b]=[b]-[a]$ if $[a, b]\in I(1, j)_1$, and $\partial[a]=0$ if $[a]\in I(1, j)_0$.
    \item  The distance function $\md: I(1, j)_0\times I(1, j)_0\rightarrow\N$ is defined as $\md(x, y)=3^{j}|x-y|$. We say $x,y$ are {\em adjacent} if ${\bf d}(x,y)=1$.
    \item  The map $\n(i, j): I(1, i)_{0}\to I(1, j)_{0}$ is defined as: $\n(i, j)(x)\in I(1, j)_{0}$ is the unique element of $I(1, j)_0$, such that
    $$\md\big(x, \n(i, j)(x)\big)=\inf\big\{\md(x, y): y\in I(1, j)_{0}\big\}.$$
    \item  For any map $\phi: I(1, j)_{0}\rightarrow\C^G(M)$, the \emph{fineness} of $\phi$ with respect to the $\M$ norm is defined as:
\begin{equation*}\label{fineness}
\mf(\phi)=\sup\Big\{\frac{\M\big(\partial\phi(x)-\partial\phi(y)\big)}{\md(x, y)}:\ x, y\in I(1, j)_{0}, x\neq y\Big\}.
\end{equation*}
   It was observed by Fernando C. Marques and Andr\'e Neves that $\mf(\phi)<\de$ if and only if $\M\big(\partial\phi(x)-\partial\phi(y)\big)<\de$ whenever ${\bf d}(x,y)=1$.
   Similarly, we can define the fineness of $\phi$ with respect to the $\F$-norm and $\mF$-metric.
   \item  We denote $\phi: I(1, j)_{0}\rightarrow\big(\C^G(M), \{0\}\big)$ as a map such that $\phi\big(I(1, j)_{0}\big)\subset\C^G(M)$ and $\partial\phi|_{I_{0}(1, j)_{0}}=0$, i.e. $\phi([0]), \phi([1])=\emptyset$ or $M$.
\end{itemize}

\subsection{Homotopy sequences}\label{Sec-Homotopy-sequences}{~}

\begin{definition}\label{homotopy for maps}
Given $\de>0$ and $\phi_{i}: I(1, k_{i})_{0}\rightarrow\big(\C^G(M), \{0\}\big)$, $i=1,2$, we say \emph{$\phi_{1}$ is $1$-homotopic to $\phi_{2}$ in $\big(\C^G(M), \{0\}\big)$ with fineness $\de$}, if we can find a map
$$\psi: I(1, k_{3})_{0}\times I(1, k_{3})_{0}\rightarrow \C^G(M).$$
for some $k_{3}\in\N$ and $k_{3}\geq\max\{k_{1}, k_{2}\}$ such that
\begin{itemize}
\setlength{\itemindent}{1em}
\item $\mf(\psi)\leq \de$;
\item $\psi([i-1], x)=\phi_{i}\big(\n(k_{3}, k_{i})(x)\big)$, $i=1,2$;
\item $\partial\psi\big(I(1, k_{3})_{0}\times I_{0}(1, k_{3})_{0}\big)=0$.
\end{itemize}
\end{definition}
\begin{remark}
Note that the first and the second conditions imply that $\mf(\phi_{i})\leq \de$, $i=1,2$.
\end{remark}

\begin{definition}\label{(1, M) homotopy sequence}
A \emph{$(1, \M)$-homotopy sequence of mappings into $\big(\C^G(M), \{0\}\big)$} is a sequence of mappings $\{\phi_{i}\}_{i\in\N}$,
$$\phi_{i}: I(1, k_{i})_{0}\rightarrow\big(\C^G(M), \{0\}\big),$$
such that $\phi_{i}$ is $1$-homotopic to $\phi_{i+1}$ in $\big(\C^G(M), \{0\}\big)$ with fineness $\de_{i}>0$, and
\begin{itemize}
\setlength{\itemindent}{1em}
\item $\lim_{i\rightarrow\infty}\de_{i}=0$;
\item $\sup_{i}\big\{\M(\partial\phi_{i}(x)):\ x\in I(1, k_{i})_{0}\big\}<+\infty$.
\end{itemize}
\end{definition}

\begin{remark}
Note that the second condition implies that $\sup_{i}\big\{\Ac(\phi_{i}(x)):\ x\in I(1, k_{i})_{0}\big\}<+\infty$.
\end{remark}

\begin{definition}\label{homotopy for sequences}
Given two $(1, \M)$-homotopy sequences of mappings $S_{1}=\{\phi^{1}_{i}\}_{i\in\N}$ and $S_{2}=\{\phi^{2}_{i}\}_{i\in\N}$ into $\big(\C^G(M), \{0\}\big)$, \emph{$S_{1}$ is homotopic to $S_{2}$} if there exists $ \{\de_{i}>0\}_{i\in\N}$, such that
\begin{itemize}
\setlength{\itemindent}{1em}
\item $\phi^{1}_{i}$ is $1$-homotopic to $\phi^{2}_{i}$ in $\big(\C^G(M), \{0\}\big)$ with fineness $\de_{i}$;
\item $\lim_{i\rightarrow \infty}\de_{i}=0$.
\end{itemize}
\end{definition}

It is easy to see that the homotopic relation is an equivalence relation on the space of $(1, \M)$-homotopy sequences of mappings into $\big(\C^G(M), \{0\}\big)$. An equivalence class is named as a \emph{$(1, \M)$-homotopy class of mappings into $\big(\C^G(M), \{0\}\big)$}. The set of all such equivalence classes is denoted by $\pi^{\#}_{1}\big(\C^G(M, \M), \{0\}\big)$.

\subsection{Min-max construction.}

\begin{definition}
(Min-max definition) Given $\Pi\in\pi^{\#}_{1}\big(\C^G(M, \M), \{0\}\big)$, define  $\bL^c: \Pi\rightarrow\R^{+}$ as a function given by:
\[ \bL^c(S)=\bL^c(\{\phi_{i}\}_{i\in\N})=\limsup_{i\rightarrow\infty}\max\big\{\Ac\big(\phi_{i}(x)\big):\ x \textrm{ lies in the domain of $\phi_{i}$}\big\}. \]
The \emph{$\mathcal{A}^c$-min-max value of $\Pi$} is defined as
\begin{equation}\label{width}
\bL^c(\Pi)=\inf\{\bL^c(S):\ S\in\Pi\}.
\end{equation}
A sequence $S=\{\phi_i\}_{i\in\N}\in\Pi$ is called a \emph{critical sequence}  if $\bL^c(S)=\bL^c(\Pi)$.
The {\em image set} of $S$ is the compact subset $K(S)\subset\mathcal{V}^G_n(M^{n+1})$ defined by
 $$K(S)=\{V=\lim_{j\to \infty}|\partial \phi_{i_{j}}(x_{j})|:\ \textrm{$x_{j}$ lies in the domain of $\phi_{i_{j}}$}\}.$$
The \emph{critical set} of $S$ is the subset $C(S)\subset K(S)\subset\mathcal{V}^G_n(M)$ defined by
\begin{equation}\label{critical set}
 C(S)=\{V=\lim_{j\rightarrow\infty}|\partial\phi_{i_j}(x_j)|:\ \text{with} \lim_{j\to\infty} \Ac(\phi_{i_j}(x_j))=\bL^c(S)\}.
\end{equation}
\end{definition}


Note that by a diagonal sequence argument as in \cite[4.1(4)]{pitts}, we immediately have:
\begin{lemma}
\label{L:Existence of critical sequence}
Given any $\Pi\in\pi^{\#}_{1}\big(\C^G(M, \M), \{0\}\big)$, there exists a critical sequence $S\in \Pi$.
\end{lemma}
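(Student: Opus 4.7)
The plan is to mimic the standard diagonal-extraction used in Pitts's book (and reproduced in \cite{pitts} 4.1(4)), adapted trivially to the $\C^G(M)$ setting since none of the combinatorial manipulations see the $G$-invariance.

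First I would use the definition of the infimum: pick a sequence of critical-sequence candidates $S_k=\{\phi^k_i\}_{i\in\N}\in\Pi$ with $\bL^c(S_k)\to\bL^c(\Pi)$. Since $S_k,S_{k+1}\in\Pi$ are homotopic, there is a sequence of fineness parameters $\{\delta^k_i\}$ with $\delta^k_i\to 0$ as $i\to\infty$, so that $\phi^k_i$ is $1$-homotopic to $\phi^{k+1}_i$ with fineness $\delta^k_i$. I would then choose, for each $k$, an index $i_k$ large enough that simultaneously (i) $\max_x \Ac(\phi^k_{i_k}(x))\leq \bL^c(S_k)+1/k$, (ii) $\mathfrak{f}(\phi^k_{i_k})\leq 1/k$ and the internal homotopies within $S_k$ past index $i_k$ have fineness $\leq 1/k$, (iii) the cross-homotopy between $S_k$ and $S_{k+1}$ at index $i_k$ has fineness $\leq 1/k$, and (iv) $i_{k+1}\geq i_k$. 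Set $\psi_k:=\phi^k_{i_k}$.

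Next I would verify that $\{\psi_k\}$ is a $(1,\M)$-homotopy sequence of mappings into $(\C^G(M),\{0\})$. The mass bound $\sup_k\sup_x \M(\partial\psi_k(x))<\infty$ follows from $\bL^c(S_k)\to\bL^c(\Pi)<\infty$ and the uniform bound on $\M(\partial\phi^k_i)$ inside each $S_k$. To produce the $1$-homotopy between $\psi_k$ and $\psi_{k+1}$ with fineness $\de_k\to 0$, I would concatenate three homotopies (all taking values in $\C^G(M)$, which is closed under the operations used): the intra-$S_k$ homotopy from $\phi^k_{i_k}$ to $\phi^k_{i_{k+1}}$, the cross homotopy from $\phi^k_{i_{k+1}}$ to $\phi^{k+1}_{i_{k+1}}$, and finally the trivial identification with $\psi_{k+1}=\phi^{k+1}_{i_{k+1}}$. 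By choice (ii)(iii) these can be arranged with combined fineness $O(1/k)$, after reparametrizing the $I(1,\cdot)_0$ domains via the $\mathbf n(i,j)$ maps. In the same step I would check that $\{\psi_k\}$ is homotopic to $S_1$ (hence lies in $\Pi$) by concatenating the original homotopies $\phi^1_i\to\phi^1_{i_k}\to\phi^k_{i_k}=\psi_k$ with fineness tending to zero.

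Finally, for the critical property, $\bL^c(\{\psi_k\})=\limsup_k \max_x\Ac(\psi_k(x))\leq \limsup_k(\bL^c(S_k)+1/k)=\bL^c(\Pi)$, while the reverse inequality $\bL^c(\{\psi_k\})\geq \bL^c(\Pi)$ holds because $\{\psi_k\}\in\Pi$. The main obstacle, which is really only a bookkeeping issue rather than a conceptual one, is verifying that the concatenation of the three homotopies above can be assembled into a single map on $I(1,k_3)_0\times I(1,k_3)_0$ satisfying Definition \ref{homotopy for maps} with controlled fineness; this is done by choosing $k_3$ sufficiently large so that the reparametrizations $\mathbf n(k_3,\cdot)$ absorb the loss, exactly as in \cite[4.1(4)]{pitts}. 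Since every operation (addition of currents, restriction, taking boundaries, passing to limits) preserves $G$-invariance, no new ingredient beyond the standard argument is required.
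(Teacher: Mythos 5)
Your proposal is correct and is essentially the argument the paper itself invokes: the paper proves this lemma by simply citing the diagonal-sequence argument of \cite[4.1(4)]{pitts}, and your write-up is a faithful unpacking of that diagonal extraction (minimizing sequence of homotopy sequences, diagonal choice of indices controlling the intra- and cross-homotopy finenesses, verification of membership in $\Pi$ and of criticality), with the correct observation that none of the combinatorics interacts with $G$-invariance.
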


\subsection{Existence of nontrivial $G$-sweepouts}
\label{SS:Existence of nontrivial sweepouts}

\begin{theorem}
\label{T:existence of nontrivial G-sweepouts}

There exists $\Pi\in\pi^{\#}_{1}\big(\C^G(M, \M), \{0\}\big)$, such that for any $c>0$, we have $\bL^c(\Pi)>0$.
\end{theorem}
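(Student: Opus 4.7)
The plan is to imitate the construction in \cite[Section 3]{zhou and zhu}, replacing $\C(M)$ by $\C^G(M)$ throughout and ensuring that every intermediate slice in the discretization remains $G$-invariant. First I would fix a smooth $G$-invariant function $f : M \to [0,1]$ with $\{f = 0\}, \{f = 1\} \neq \emptyset$ and $\sup_{t} \mH^n(\{f = t\}) < \infty$ (such $f$ exists, e.g.\ by averaging over $G$ a Morse function on the orbit space $M/G$ and pulling it back). The sublevel sets $\Om_t := \{f < t\} \in \C^G(M)$ then provide an $\F$-continuous sweepout with $\Om_0 = \emptyset$, $\Om_1 = M$ and uniformly bounded boundary mass. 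Next I would feed this family into an Almgren--Pitts-style discretization to produce, for each $i$, a map $\phi_i : I(1,k_i)_0 \to (\C^G(M), \{0\})$ with $\phi_i([0]) = \emptyset$, $\phi_i([1]) = M$, fineness $\de_i \to 0$, and $\sup_{i,x} \M(\partial \phi_i(x)) < \infty$; adjacent $\phi_i, \phi_{i+1}$ are then $1$-homotopic with fineness tending to $0$, yielding the desired class $\Pi \in \pi_1^\#(\C^G(M, \M), \{0\})$.

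To establish $\bL^c(\Pi) > 0$, fix $c > 0$ and any $S = \{\phi_i\} \in \Pi$. Writing $V_{i,k} := \mH^{n+1}(\phi_i(x_k))$ for consecutive vertices $x_k, x_{k+1}\in I(1,k_i)_0$, I would apply the $G$-invariant isoperimetric lemma (Proposition~\ref{isoperimetric}) once $\de_i < \nu_M$: it produces a unique bridging $(n+1)$-current $Q_{i,k}$ with $\partial Q_{i,k} = \partial\phi_i(x_k) - \partial\phi_i(x_{k+1})$ and $\M(Q_{i,k}) = \F(\partial\phi_i(x_k) - \partial\phi_i(x_{k+1})) \leq \de_i$. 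By uniqueness $Q_{i,k} = [[\phi_i(x_k)]] - [[\phi_i(x_{k+1})]]$, so $|V_{i,k+1} - V_{i,k}| \leq \de_i$. Since $V_{i,0} = 0$ and $V_{i,3^{k_i}} = \mH^{n+1}(M)$, the discrete volumes sweep $[0, \mH^{n+1}(M)]$ to resolution $\de_i$. The Euclidean-type isoperimetric inequality on the compact manifold $M$ provides constants $c_M, V_0 > 0$ with $\mH^n(\partial\Om) \geq c_M\, \mH^{n+1}(\Om)^{n/(n+1)}$ whenever $\mH^{n+1}(\Om) \leq V_0$, so the function $h(V) := c_M V^{n/(n+1)} - cV$ attains a strictly positive maximum $h_c^* > 0$ at some $V_c^* \in (0, V_0)$. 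Choosing $x_k$ so that $V_{i,k}$ is within $\de_i$ of $V_c^*$ gives $\Ac(\phi_i(x_k)) \geq h_c^* - O(\de_i)$, whence $\bL^c(S) \geq h_c^*$. Since $S \in \Pi$ was arbitrary, $\bL^c(\Pi) \geq h_c^* > 0$.

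The hard part is the $G$-equivariant discretization in the first step: Pitts's original procedure cuts and glues inside small Euclidean balls, whereas here every cut must preserve $G$-invariance. The strategy I would pursue is to carry out Almgren's interpolation inside the $G$-tubes $\sB^G_r(p)$ rather than balls---using the transversality of slices and orbits to reduce the local model to a product-like setting on a slice times an orbit---and then $G$-average the outcome with the Haar measure $\mu$, controlling the fineness of the averaged family via Proposition~\ref{isoperimetric}. Analogous strategies were successfully implemented by Liu \cite{Liu} and T.~R.~Wang \cite{WTR} in the minimal-surface setting; because $\Ac$ differs from area only by the lower-order $\mH^{n+1}$-term, no additional difficulty should arise in transplanting that framework to the $(\C^G(M), \Ac)$ setup.
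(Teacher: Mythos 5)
Your overall architecture coincides with the paper's: a $G$-invariant Morse-type function whose sublevel sets give an $\F$-continuous sweepout in $\C^G(M)$ with bounded boundary mass, a $G$-equivariant discretization built by combining Zhou's interpolation with the equivariant machinery of Liu and Wang (this is exactly what the paper does via Propositions \ref{G-current compactness}--\ref{isoperimetric}), and then the Zhou--Zhu volume/isoperimetric argument for $\bL^c(\Pi)>0$. The construction half of your proposal is fine.

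The gap is in the positivity half, at the intermediate-value step for the volumes. First, the claim ``by uniqueness $Q_{i,k}=[[\phi_i(x_k)]]-[[\phi_i(x_{k+1})]]$'' is false: Proposition \ref{isoperimetric} characterizes $Q_{i,k}$ as the filling with $\M(Q_{i,k})=\F(\partial\phi_i(x_k)-\partial\phi_i(x_{k+1}))$, whereas $[[\phi_i(x_k)]]-[[\phi_i(x_{k+1})]]$ is merely \emph{some} filling; the two differ by $k[[M]]$ for an integer $k$ that need not vanish. Second, for an arbitrary $S\in\Pi$ you only know $\phi_i([0]),\phi_i([1])\in\{\emptyset,M\}$, not that $V_{i,0}=0$ and $V_{i,3^{k_i}}=\mH^{n+1}(M)$. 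Both failures occur simultaneously in the decisive test case: a sweepout that is constantly $\emptyset$ and jumps to $M$ across one edge has $\partial\phi_i(x_k)-\partial\phi_i(x_{k+1})=0$, hence satisfies the $\M$-fineness condition with $\de_i=0$, yet its volume increment is $\mH^{n+1}(M)$; your argument as written would ``prove'' positivity for the class of this trivial sweepout, which is false. What is actually needed --- and is the content of \cite[Theorem 3.9]{zhou and zhu}, to which the paper defers --- is the homotopy invariance of the Almgren degree: $\sum_j Q_{i,j}$ is an $(n+1)$-cycle, hence $m\,[[M]]$ with $m\in\mZ$, and $m$ is invariant under $(1,\M)$-homotopies, so $m=1$ for every $S\in\Pi$ because it equals $1$ for the Morse sweepout. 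From $\sum_j\int_{Q_{i,j}}dV=\mH^{n+1}(M)$ with each term bounded by $\de_i$, one then extracts a vertex whose slice has volume within $\de_i$ of the prescribed $V_c^*$ (after reducing mod $\mH^{n+1}(M)$), and your isoperimetric estimate finishes the proof. So the missing idea is precisely the degree/homotopy-invariance step that makes the class $\Pi$ nontrivial; with it inserted, your argument becomes the paper's.
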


\begin{proof}
 First, using Proposition \ref{G-current compactness}, \ref{G-varifold compactness}, \ref{restrict}, \ref{slice} and \ref{isoperimetric}, the discretization/interpolation theorem given by Xin Zhou (\cite[Theorem 5.1]{zhou}) can be adapted to a $G$-invariant version. Secondly, take a $G$-equivariant Morse function $\phi : M\rightarrow[0,1]$, and consider the sub-level sets $\Phi: [0,1]\rightarrow\C^G(M)$, given by $\Phi(t)=\{x\in M : \phi(x)<t \}$. Then the map $\partial{\Phi}: [0,1]\rightarrow ({\mathcal Z}_n^G(M),\mathcal{F})$ is continuous and $\M(\partial\Phi(t))$ is continuous in $t\in[0,1]$. Now by the $G$-invariant discretization/interpolation theorem, $\Phi$ can be discretized to a $(1, \M)$-homotopy sequences $S=\{\phi_{i}\}_{i\in\N}$ where $\phi_{i}: I(1, k_i)_{0}\rightarrow\big(\C^G(M), \{0\}\big)$. By taking $\Pi=[S]$, then the proof for  $\Pi\in\pi^{\#}_{1}\big(\C^G(M, \M), \{0\}\big)$ with $\bL^c(\Pi)>0$ proceeds as in the CMC setting \cite[Theorem 3.9]{zhou and zhu}.
\end{proof}

\medskip
\section{Tightening}\label{Tightening1}

In this section, we recall the tightening map adapted to the $\Ac$ functional constructed in \cite{zhou and zhu}. We will prove that after applying the modified tightening map to a critical sequence, every element in the critical set has $c$-bounded first variation.

\subsection{Review of constructions in \cite[\S 4]{zhou and zhu} }
\label{SS:Vector field and diffeomorphisms}{~}

We first recall several key ingredients obtained in \cite[\S 4]{zhou and zhu} for the tightening
process. Given $L>0$ and $c>0$, we will use the following notations:
\begin{itemize}
\setlength{\itemsep}{5pt}
	\item $A^L = \{V\in\mathcal{V}^G_n(M): \|V\|(M)\leq2L \}$;
	\item $A^{c}_{\infty}= \{V\in A^L: |\de V(X)|\leq c \int_M|X|d\mu_V,  \forall X\in\mathfrak{X}^G(M) \}$;
	\item $A_j =\{V\in A^L: \frac{1}{2^j}\leq {\bf F}(V,A^{c}_{\infty})\leq \frac{1}{2^{j-1}}\}$, $j\in \N$;
    \item $\gamma(V)={\bf F}(V,A^{c}_{\infty})$, \text{for any} $V\in A^L$;
    \item $\Phi_X:\R\times M\rightarrow M $ denotes the one parameter group of $G$-equivariant diffeomorphisms generated by $X\in\mathfrak{X}^G(M)$.
\end{itemize}

For any $V\in A^{c}_{\infty}$, $V$ has $(G,c)$-bounded first variation.
By Proposition \ref{G-varifold compactness} and the fact that $(G,c)$-bounded first variation is a closed condition, we have $A^L$, $A_j$, $A^{c}_{\infty}$ are compact subsets of $\mathcal{V}^G_n(M)$ under the ${\bf F}$-metric for all $j\in \N$, $A_j$, $A^{c}_{\infty}$.

We will need the following lemma which can be used to construct a continuous map from $A^L$ to $\mathfrak{X}^G(M)$.
\begin{lemma}
\label{G-invariant vector field}
For any $V\in A_j$, there exists $X_V\in \mathfrak{X}^G(M)$, such that
\begin{equation}\label{vector}
\|X_V\|_{C^1(M)}\leq 1, \quad \de V(X_V)+c \int_M|X_V|d\mu_V\leq -c_j<0
\end{equation}
where $c_j$ depends only on $j$.
\end{lemma}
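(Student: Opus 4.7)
The plan is to deduce the lemma from a compactness argument on $A_j$. For each $V\in A^L$ I would set
\[
\rho(V) \;=\; -\inf\Bigl\{\, \de V(X) + c\!\int_M |X|\,d\mu_V \;:\; X\in\mathfrak{X}^G(M),\ \|X\|_{C^1(M)}\leq 1 \,\Bigr\},
\]
which is $\geq 0$ since $X\equiv 0$ is admissible, and which is finite on $A^L$ because $|\de V(X)|$ and $\int|X|d\mu_V$ are both controlled by $\|X\|_{C^1}\cdot\|V\|(M)\leq 2L$. The first task is to check that $\rho(V)>0$ whenever $V\in A_j$. By definition $V\in A_j$ forces ${\bf F}(V,A^c_\infty)\geq 2^{-j}$, so $V\notin A^c_\infty$, i.e.\ $V$ fails to have $(G,c)$-bounded first variation. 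Hence there exists some $X\in\mathfrak{X}^G(M)$ with $|\de V(X)|>c\int_M |X|\,d\mu_V$; replacing $X$ by $-X$ if necessary gives $\de V(X)+c\int|X|\,d\mu_V<0$, and rescaling to $\|X\|_{C^1}\leq 1$ yields $\rho(V)>0$.

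Next I would verify that $\rho$ is lower semi-continuous on $\V^G_n(M)\cap A^L$ in the ${\bf F}$-metric. For each fixed test field $X\in\mathfrak{X}^G(M)$, the maps $V\mapsto \de V(X)$ and $V\mapsto \int|X|d\mu_V$ are continuous in the varifold ${\bf F}$-metric on bounded-mass subsets, so $V\mapsto \de V(X)+c\int|X|d\mu_V$ is continuous. Therefore $-\rho$ is the pointwise infimum of a family of continuous functions and is upper semi-continuous, meaning $\rho$ is lower semi-continuous.

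Finally, the compactness of $A_j$ in the ${\bf F}$-metric (which follows from Proposition~\ref{G-varifold compactness} together with the closedness of the $(G,c)$-bounded first variation condition, and was already noted in the paragraph preceding the lemma) implies that the lsc function $\rho$ attains its infimum on $A_j$ at some $V_0$, and by the first step $\rho(V_0)>0$. Set $c_j := \rho(V_0)/2 > 0$. For each $V\in A_j$, pick a near-minimiser $X_V\in\mathfrak{X}^G(M)$ with $\|X_V\|_{C^1}\leq 1$ and
\[
\de V(X_V) + c\!\int_M |X_V|\,d\mu_V \;\leq\; -\rho(V) + \rho(V_0)/2 \;\leq\; -\rho(V_0) + \rho(V_0)/2 \;=\; -c_j,
\]
which is exactly the desired inequality, with $c_j$ depending only on $j$ (through $L$ and $c$).

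There is no real obstacle here: the lemma is essentially packaging ``$V\notin A^c_\infty \Rightarrow \rho(V)>0$'' together with the compactness of $A_j$. The only point where the $G$-equivariant setting enters is that the inf defining $\rho(V)$ is taken over $\mathfrak{X}^G(M)$ rather than $\X(M)$, but this is already built into the definition of $A^c_\infty$, so no averaging of vector fields is needed at this stage. Lemma~\ref{$c$-bounded first variation} would only be relevant if one instead wished to recognise $V\in A^c_\infty$ from non-invariant test fields.
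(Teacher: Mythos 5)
Your argument is correct and is essentially the paper's intended proof: the paper only says ``this follows from a contradiction argument using the compactness of $A_j$,'' and your lower-semicontinuity/infimum formulation of $\rho$ is just the direct (non-contradiction) packaging of that same compactness argument, with the key pointwise input being that $V\in A_j$ forces $V\notin A^{c}_{\infty}$ and hence $\rho(V)>0$. All the individual steps (continuity of $V\mapsto \de V(X)+c\int_M|X|\,d\mu_V$ for fixed $X$ in the ${\bf F}$-metric on bounded-mass sets, sign correction via $-X$, and positive rescaling) check out.
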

\begin{proof}
This follows from a contradiction argument using the compactness of $A_j$.
\end{proof}
The following lemma is a straightforward consequence of Lemma \ref{G-invariant vector field} and the construction in \cite[\S 4.2]{zhou and zhu}.
\begin{lemma}
\label{a map from varifold to vector field }
There exists a continuous map $X: A^L \rightarrow \mathfrak{X}^G(M)$ with respect to the $C^1$ topology on $\mathfrak{X}^G(M)$, so that $X(V)$ satisfies (\ref{vector}) for all $V\in A^L$.
\end{lemma}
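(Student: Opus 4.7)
The plan is to adapt the construction in \cite[\S 4.2]{zhou and zhu} line by line, with the only modification being that every vector field is chosen from the closed linear subspace $\mathfrak{X}^G(M) \subset \X(M)$; this is exactly what Lemma \ref{G-invariant vector field} provides. First I would use the compactness of $A_j$ in $\mathcal{V}^G_n(M)$ under the ${\bf F}$-metric (Proposition \ref{G-varifold compactness}) together with Lemma \ref{G-invariant vector field} to cover $A_j$ by finitely many ${\bf F}$-open neighborhoods $U_i^{(j)}$ of points $V_i^{(j)} \in A_j$ so that the $G$-vector field $X_{V_i^{(j)}}$ continues to satisfy a uniform version of (\ref{vector}) on $U_i^{(j)}$, with a constant depending only on $j$. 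This is possible because for each fixed $X \in \mathfrak{X}^G(M)$, the functionals $V \mapsto \de V(X)$ and $V \mapsto \int_M |X|\, d\mu_V$ are continuous on $\mathcal{V}^G_n(M)$ with respect to the ${\bf F}$-metric.

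Next, I would choose a continuous partition of unity $\{\lambda_i^{(j)}\}$ subordinate to $\{U_i^{(j)}\}$ (functions of $V$) and set
\[
X_j(V) \;=\; \sum_i \lambda_i^{(j)}(V)\, X_{V_i^{(j)}}.
\]
Since $\mathfrak{X}^G(M)$ is a linear subspace and each $X_{V_i^{(j)}}$ lies in it, $X_j(V)$ is automatically a $G$-invariant vector field. The linearity of $\de V(\cdot)$ together with the triangle inequality for $\int_M |\cdot|\, d\mu_V$ shows that $X_j(V)$ still satisfies (\ref{vector}) on $A_j$ with a possibly degraded constant depending only on $j$, while $\|X_j(V)\|_{C^1} \leq 1$ by convexity.

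Then I would glue the local maps $X_j$ across the dyadic layers $\{A_j\}_{j\in\N}$ using a continuous scalar cutoff built from $\gamma(V) = {\bf F}(V, A^{c}_{\infty})$, exactly as in \cite[\S 4.2]{zhou and zhu}. For $V \in A^{c}_{\infty}$ I set $X(V) = 0$. The resulting map $X : A^L \to \mathfrak{X}^G(M)$ is continuous in the $C^1$-topology: within each $A_j$ this follows from the continuity of the partition-of-unity weights in $V$; at the interface between $A_j$ and $A_{j+1}$ it follows from the continuity of the dyadic cutoff; and at the interface with $A^{c}_{\infty}$ it follows because $\|X(V)\|_{C^1}$ is bounded by the cutoff, which vanishes as $\gamma(V) \to 0$.

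The main point to check is not the construction itself, which is a direct transcription of the non-equivariant version, but the preservation of $G$-equivariance. This is automatic because every ingredient in the recipe (the local vector fields $X_{V_i^{(j)}}$ produced by Lemma \ref{G-invariant vector field}, the partition-of-unity sums, the scalar multiplications by $\lambda_i^{(j)}(V)$, and the scalar cutoffs in $\gamma(V)$) stays inside the linear space $\mathfrak{X}^G(M)$. Hence no averaging argument is needed at this stage: Lemma \ref{G-invariant vector field} already supplies $G$-invariant input, and linearity does the rest.
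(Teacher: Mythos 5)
Your proposal is correct and is essentially the paper's argument: the paper simply cites Lemma \ref{G-invariant vector field} together with the construction of \cite[\S 4.2]{zhou and zhu}, and your write-up is a faithful transcription of that construction (compactness of $A_j$, finite cover, partition of unity, dyadic gluing in $\gamma(V)$), with $G$-invariance preserved automatically because $\mathfrak{X}^G(M)$ is a linear subspace.
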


 Our goal is to show that given $\Om\in \C^G(M)$ with $|\partial\Om| \in A^L\setminus A^{c}_{\infty}$, then $\Ac(\Om)$ can be deformed down by a fixed amount depending only on $\gamma(|\partial\Om|)={\bf F}(|\partial\Om|,A^{c}_{\infty})$. The following results have been obtained as in \cite[\S 4.3]{zhou and zhu}:

\begin{proposition}\label{G-equivariant Deformation}
($G$-equivariant Deformation)
\begin{enumerate}[(1)]
\setlength{\itemsep}{5pt}
  \item There are two continuous functions $g:\R^{+}\rightarrow \R^{+}$ and $\rho:\R^{+}\rightarrow \R^{+}$, such that $\rho(0)=0$ and
   \begin{equation}
   \quad \de W(X(V))+c \int_M|X(V)|d\mu_W\leq -g(\gamma(V)),
   \end{equation}
   if $W\in A^L$ and ${\bf F}(W,V)\leq \rho(\gamma(V))$;
  \item There exists a continuous time function $T:[0,+\infty)\rightarrow[0,+\infty)$, such that

      \begin{itemize}
      \setlength{\itemsep}{5pt}
      \item $\lim_{t\rightarrow 0}T(t)=0$, and $T(t)>0$ if $t\neq0$;
      \item For any $V\in A^L$, denote $\Phi_V=\Phi_{X(V)}$, then for all $0\leq t\leq T(\gamma(V))$, we have
      \begin{equation}
      {\bf F}((\Phi_{V}(t))_{\#}V,V)< \rho(\gamma(V));
      \end{equation}
      \end{itemize}
  \item For any $V\in A^L\setminus A^{c}_{\infty}$, define $\Psi_V(t,\cdot)=\Phi_V(T(\gamma(V))t,\cdot)$ and $V_t=(\Psi_{V}(t))_{\#}V$, we have
      \begin{itemize}
      \setlength{\itemsep}{5pt}
      \item ${\bf F}(V_t,V)< \rho(\gamma(V))$ for all $t\in[0,1]$;
      \item The map $(t,V)\rightarrow V_t$ is continuous under the ${\bf F}$-metric;
      \item The flow $\Psi_V(t,\cdot)=\Phi_V(T(\gamma(V))t,\cdot)$ is generated by the vector field
      \begin{equation}\label{new vector field}
      \tilde{X}(V)=T(\gamma(V))X(V).
      \end{equation}
      \end{itemize}
\end{enumerate}
\end{proposition}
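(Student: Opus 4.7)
The plan is to carry over the construction in \cite[\S 4.3]{zhou and zhu} essentially verbatim, with the $G$-invariant compactness result (Proposition \ref{G-varifold compactness}) and the continuous selection $X: A^L \to \mathfrak{X}^G(M)$ from Lemma \ref{a map from varifold to vector field } playing the roles of their non-equivariant analogues. The key point is that every step in the original argument preserves $G$-invariance: pushforward by the one-parameter group generated by $X(V) \in \mathfrak{X}^G(M)$ stays in $\V^G_n(M)$ by Proposition \ref{Prop-G-push forward}, and the first-variation functionals are continuous in $W$ under the ${\bf F}$-metric when the test vector field is fixed.

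For part (1), I would fix $V \in A^L \setminus A^c_\infty$, so that $V \in A_j$ for some $j \in \N$, and invoke Lemma \ref{a map from varifold to vector field } to obtain $\de V(X(V)) + c \int_M |X(V)|\, d\mu_V \leq -c_j$. The map $(V,W) \mapsto \de W(X(V)) + c \int_M |X(V)|\, d\mu_W$ is jointly continuous under the ${\bf F}$-metric on $A^L \times A^L$, since $V \mapsto X(V)$ is continuous into $C^1$ and, for a fixed smooth vector field $Y$, both $W \mapsto \de W(Y)$ and $W \mapsto \int_M |Y|\, d\mu_W$ are ${\bf F}$-continuous. Compactness of $A_j$ then yields a uniform $\eta_j > 0$ such that ${\bf F}(W,V) \leq \eta_j$ with $V \in A_j$ forces the bound $-c_j/2$. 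A piecewise-linear interpolation between consecutive scales $\gamma(V) \in [2^{-j}, 2^{-j+1}]$, as in \cite[\S 4.3]{zhou and zhu}, upgrades the discretely-defined bounds to continuous functions $g, \rho : \R^+ \to \R^+$ with $\rho(0) = 0$.

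For parts (2) and (3), I would use that $\|X(V)\|_{C^1(M)} \leq 1$ together with the continuity of $V \mapsto X(V)$ to get a uniform modulus of continuity for $t \mapsto (\Phi_V(t))_\# V$ in the ${\bf F}$-metric, jointly in $V \in A^L$. Choosing $T:[0,\infty) \to [0,\infty)$ continuous, positive off $\{0\}$, vanishing at $0$, and small enough that $(\Phi_V(t))_\# V$ remains inside the $\rho(\gamma(V))$-neighborhood of $V$ for $t \in [0, T(\gamma(V))]$ yields (2). Part (3) is then the straightforward rescaling $\Psi_V(t,\cdot) = \Phi_V(T(\gamma(V))t, \cdot)$: the bound ${\bf F}(V_t, V) < \rho(\gamma(V))$ holds by construction, the generator computed via the chain rule is $\tilde{X}(V) = T(\gamma(V))X(V)$, and the joint continuity of $(t, V) \mapsto V_t$ follows from the continuity of $X$, of $T \circ \gamma$, and the standard smooth dependence of ODE flows on parameters.

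The main obstacle is the interpolation argument in part (1): producing \emph{continuous} (not merely $j$-stratified) functions $g, \rho$ on $[0, \infty)$ that faithfully reflect the uniform deformation data on each annular shell $A_j$, while ensuring $\rho$ (and hence, through step (2), $T$) vanishes at $\gamma = 0$. The construction in \cite[\S 4.3]{zhou and zhu} does precisely this, and the $G$-equivariant setting introduces no new difficulty because all intermediate objects---the sets $A_j$, the selected vector fields $X(V)$, the flows $\Phi_V$, and their pushforwards---live in the respective $G$-invariant spaces by construction of $X$ and by Proposition \ref{Prop-G-push forward}.
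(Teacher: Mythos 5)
Your proposal is correct and follows essentially the same route as the paper, which simply asserts that the construction of \cite[\S 4.3]{zhou and zhu} carries over once the vector-field selection is made $G$-equivariantly via Lemma \ref{a map from varifold to vector field } and pushforwards stay in $\V_n^G(M)$ by Proposition \ref{Prop-G-push forward}. You have merely spelled out the compactness, interpolation, and flow-continuity steps that the paper leaves implicit by citation.
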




When $\Om\in \C^G(M)$ with $|\partial\Om| \in A^L\setminus A^{c}_{\infty}$,  Proposition \ref{G-equivariant Deformation}(1) implies
 $$\quad \de V(X(|\partial\Om|))+c \int_M|X(|\partial\Om|)|d\mu_V \leq -g(\gamma(|\partial\Om|)),$$
for any $V\in A^L$ and ${\bf F}(V,|\partial\Om|)\leq \rho(\gamma(|\partial\Om|))$. Noting $\tilde{X}(|\partial\Om|)\in \mathfrak{X}^G(M)$, we can deform $\Om$ by $\Psi_{|\partial\Om|}(t)$ to get a 1-parameter family $\Om_t=(\Psi_{|\partial\Om|}(t))_{\#}(\Om)\in \C^G(M)$.  By (\ref{E: 1st variation for Ac}) and
  Proposition \ref{G-equivariant Deformation},  we have

  \begin{align*}
   \Ac(\Om_1)-\Ac(\Om) &\leq \int_{0}^{T(\gamma(|\partial\Om|))}[\de\Ac|_{\Om_t}](X(|\partial\Om|))dt \\
                       &\leq -T(\gamma(|\partial\Om|))g(\gamma(|\partial\Om|))=-L(\gamma(|\partial\Om|))<0.
   \end{align*}
where $L(\gamma(V))=T(\gamma(V))g(\gamma(V))$, with $L(0)=0$ and $L(\gamma(V))>0$ if $\gamma(V)>0$.

\subsection{Deforming sweepouts by the tightening map}
\label{pull-tight}{~}

We now apply our tightening map to the critical sequence provided by
Lemma \ref{L:Existence of critical sequence}. As in the usual min-max theory for the CMC setting, we confirm the existence of a critical sequence $S$ with each varifold in $C(S)$ has $c$-bounded first variation. Indeed, the proof proceeds essentially unchanged, with $X\in\mathfrak{X}(M)$ replaced by $X\in \mathfrak{X}^G(M)$.

\begin{proposition}\label{Tightening}(Tightening)
Let $\Pi\in\pi^{\#}_{1}\big(\C^G(M, \M), \{0\}\big)$ with $\bL^c(\Pi)>0$. For any critical sequence $S^\ast$ for $\Pi$, there exists another critical sequence $S$ for $\Pi$ such that $C(S)\subset C(S^\ast)$ and each $V\in C(S)$ has c-bounded first variation.
\end{proposition}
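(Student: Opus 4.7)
The plan is to mimic the tightening argument of \cite[Proposition 4.3]{zhou and zhu} verbatim, after replacing $\X(M)$ by $\mathfrak{X}^G(M)$ throughout and invoking the $G$-equivariant deformation package assembled in Proposition \ref{G-equivariant Deformation}. Concretely, fix a uniform mass bound $L>\sup_i \sup_x \M(\partial \phi_i^\ast(x))$ coming from Definition \ref{(1, M) homotopy sequence}, and consider the continuous choice of $G$-vector fields $X:A^L\to \mathfrak{X}^G(M)$ from Lemma \ref{a map from varifold to vector field }. For each $i$ and each vertex $x\in I(1,k_i)_0$, set
\[
\phi_i(x) \;=\; \bigl(\Psi_{|\partial \phi_i^\ast(x)|}(1,\cdot)\bigr)_{\#}\,\phi_i^\ast(x)\in\C^G(M),
\]
which is a Caccioppoli set and $G$-invariant because each $\Psi_V(t,\cdot)$ is a $G$-equivariant diffeomorphism generated by the $G$-vector field $\tilde X(V)\in\mathfrak{X}^G(M)$ of \eqref{new vector field}.

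Next I would verify that $S=\{\phi_i\}_{i\in\N}$ is a $(1,\M)$-homotopy sequence in the class $\Pi$. The continuity of $V\mapsto X(V)$ in $C^1$ (Lemma \ref{a map from varifold to vector field }) together with the continuity of the time function $T$ and of the flow map $(t,V)\mapsto V_t$ in Proposition \ref{G-equivariant Deformation}(3) yields a uniform modulus of continuity on the map $\Om\mapsto (\Psi_{|\partial\Om|}(1))_\#\Om$ measured in $\M$ on bounded mass subsets; feeding two adjacent vertices of $I(1,k_i)_0$ into this estimate shows $\mf(\phi_i)\to 0$. To connect $\phi_i$ with $\phi_i^\ast$ and with $\phi_{i+1}^\ast$ one builds the standard two-parameter homotopy $\psi:I(1,k)_0\times I(1,k)_0\to \C^G(M)$ by composing the flow $\Psi_{|\partial \phi_i^\ast|}(s,\cdot)$ at time $s\in[0,1]$ with the interpolating $1$-homotopy between $\phi_i^\ast$ and $\phi_{i+1}^\ast$; again the $C^1$-continuity of $X(V)$ keeps the fineness under control, so $S$ lies in the same class $\Pi$ as $S^\ast$.

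Now I would quantify the energy drop. For $\Om\in\C^G(M)$ with $V=|\partial\Om|\in A^L\setminus A_\infty^c$, the computation recalled right before Section \ref{pull-tight} gives
\[
\Ac(\phi_i(x))-\Ac(\phi_i^\ast(x)) \;\le\; -L\bigl(\gamma(|\partial \phi_i^\ast(x)|)\bigr),
\]
with $L(s)>0$ for $s>0$ and $L(0)=0$; when $V\in A_\infty^c$ the right hand side is zero and $\phi_i(x)=\phi_i^\ast(x)$ by construction. Consequently $\bL^c(S)\le \bL^c(S^\ast)=\bL^c(\Pi)$, so $S$ is again critical, and any $V\in C(S)$ comes from a subsequence on which $\gamma(|\partial \phi_{i_j}^\ast(x_j)|)\to 0$; by continuity of $\gamma$ and of the flow, the limit $V$ lies in $A_\infty^c$ and is the same as the limit of $|\partial \phi_{i_j}^\ast(x_j)|$, giving both $C(S)\subset C(S^\ast)$ and $V\in A_\infty^c$, i.e.\ $V$ has $(G,c)$-bounded first variation. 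Finally, Lemma \ref{$c$-bounded first variation} upgrades $(G,c)$-bounded first variation to $c$-bounded first variation on the whole of $M$.

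The main technical obstacle I anticipate is the first step of the second paragraph: checking that the $G$-equivariant tightening map preserves the fineness and the homotopy structure between discretized $(1,\M)$-sequences. The flow $\Psi_V$ depends on $V$ only through the continuous choice $X(V)\in\mathfrak{X}^G(M)$, so the issue reduces to a quantitative $\M$-continuity estimate for $\Om\mapsto (\Psi_{|\partial\Om|}(1))_\#\Om$ on the set $\{\M(\partial\Om)\le 2L\}$; this is exactly the estimate used in \cite[\S 4.3]{zhou and zhu} and carries over since all vector fields involved remain in $\mathfrak{X}^G(M)$ and the bi-invariant Haar averaging used to produce $X(V)$ preserves $C^1$-bounds.
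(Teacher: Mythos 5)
Your overall strategy---flow each vertex by the $G$-equivariant tightening map of Proposition \ref{G-equivariant Deformation}, record the drop $\Ac(\phi_i(x))-\Ac(\phi_i^\ast(x))\le -L\bigl(\gamma(|\partial\phi_i^\ast(x)|)\bigr)$, and conclude that every $V\in C(S)$ lies in $A^{c}_{\infty}$ before invoking the equivalence of $(G,c)$-bounded and $c$-bounded first variation---is the same as the paper's. The gap sits exactly where you flag the ``main technical obstacle'', and it does not go away: the map $\Om\mapsto(\Psi_{|\partial\Om|}(1))_{\#}\Om$ admits \emph{no} modulus of continuity in the mass norm. If $\Phi_1,\Phi_2$ are $C^1$-close but distinct diffeomorphisms and $T=\partial\Om$ is a cycle of mass comparable to $L$, then $(\Phi_1)_{\#}T-(\Phi_2)_{\#}T$ is a difference of currents carried by nearby but essentially disjoint hypersurfaces, so $\M\bigl((\Phi_1)_{\#}T-(\Phi_2)_{\#}T\bigr)$ is comparable to $2\M(T)$ no matter how close $\Phi_1$ and $\Phi_2$ are. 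Feeding two adjacent vertices into your estimate therefore only controls the flat norm (equivalently the $\mathbf F$-metric) of the difference, not its mass, so the tightened vertex maps need not form a $(1,\M)$-homotopy sequence with vanishing fineness, let alone one homotopic to $S^\ast$.

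The paper's proof never asks the pushforward to be $\M$-continuous: it extends the tightened discrete data to a family $\bar Q_i$ that is continuous in the $\mathbf F$-metric (with $\M$ continuous along it), and then applies the $G$-invariant discretization/interpolation theorem (\cite[Theorem 5.1]{zhou}, adapted as in the proof of Theorem \ref{T:existence of nontrivial G-sweepouts}) to re-discretize $\bar Q_i$ into a genuine $(1,\M)$-fine sequence $S=\{\varphi_i\}$ homotopic to $S^\ast$; the small-mass transitions between adjacent vertices are produced by the interpolation theorem's cut-and-paste/isoperimetric constructions, not by pushforward, exactly as in \cite[Proposition 4.4 and Appendix B]{zhou and zhu}. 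Your argument needs this interpolation step inserted; the remaining parts (the quantified $\Ac$-drop, criticality of $S$, $C(S)\subset C(S^\ast)$, and the final upgrade to $c$-bounded first variation) are correct in substance.
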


\begin{proof}
 Take $S^\ast=\{\varphi^\ast_i\}_{i\in\N}$ , where $\varphi^\ast_i : I(1, k_{i})_{0}\rightarrow\big(\C^G(M), \{0\}\big)$, and $\varphi^\ast_i$ is $(1,\M)$-homotopic to $\varphi^\ast_{i+1}$ in $\big(\C^G(M), \{0\}\big)$ with fineness $\delta_i\searrow 0$. Let $\Xi_i : I(1, k_{i})_{0}\times[0,1]\rightarrow\C^G(M)$ be defined as
 $$\Xi_i(x,t)=\Psi_{|\partial\varphi^\ast_i(x)|}(t)(\varphi^\ast_i(x)),$$
 where $\Psi_{|\partial\varphi^\ast_i(x)|}$ is the tightening map defined in Proposition \ref{G-equivariant Deformation}.

 Denote $\varphi^t_i(x)=\Xi_i(x,t)$. The idea is to extend $\varphi^1_i(x)$ to a continuous map $\bar{Q}_i(x)$ defined on $[0,1]$, with respect to the ${\bf F}$-metric, and then apply the discretization-interpolation theorem in \cite[Theorem 5.1]{zhou} to $\bar{Q}_i(x)$. To be exact, let $\tilde{X}_i(x)=(1-x)\tilde{X}_i(0)+x\tilde{X}_i(1)$ and $\bar{Q}_i(x)=(\Phi_{\tilde{X}_i(x)}(1))_{\#}\varphi^\ast_i(0)$, where $\tilde{X}_i(0)=\tilde{X}(|\partial\varphi^\ast_i(0)|)$ and $\tilde{X}_i(1)=\tilde{X}(|\partial\varphi^\ast_i(1)|)$ are defined in Proposition \ref{G-equivariant Deformation} (\ref{new vector field}). Note $\tilde{X}_i(x)\in\mathfrak{X}^G(M)$. By Proposition \ref{Prop-G-push forward}, Lemma \ref{a map from varifold to vector field } and Proposition \ref{G-equivariant Deformation}, both $\bar{Q}_i(x):[0,1]\rightarrow(\C^G(M),\M)$ and $\partial\bar{Q}_i(x):[0,1]\rightarrow({\mathcal Z}_n^G(M),{\bf F})$ are continuous maps.
  Thus we can apply the discretization-interpolation result in \cite[Theorem 5.1]{zhou} to get the desired sequence $S=\{\varphi_i\}_{i\in\N}$ which is $(1,\M)$-homotopic to $S^\ast=\{\varphi^\ast_i\}_{i\in\N}$. The proof proceeds essentially unchanged, just as in \cite[Proposition 4.4 and Appendix B]{zhou and zhu}.
\end{proof}

\medskip
\section{$(G,c)$-almost minimizing varifolds}\label{(G,c)-Almost Minimizing}
In this section, we introduce the notion of $(G,c)$-almost minimizing varifolds, and show the existence of such a varifold from min-max construction.
\begin{definition}\label{$(G,c)$-almost minimizing varifolds}
($(G,c)$-almost minimizing varifolds) Let $\nu$ be the $\F$-norm or $\M$-norm, or the ${\bf F}$-metric. For any given $\ep$,$\de>0$ and an open $G$-set $U\subset M$, we define $\mathfrak{a}^{c,G}_n(U;\ep,\de;\nu)$ to be the set of all $\Omega\in\C^G(M)$ such that if  $\Omega=\Omega_0,\Omega_1,\Omega_2,\cdots,\Omega_m\in\C^G(M)$ is a sequence with:
\begin{itemize}
\item[(i)] $\spt(\Om_i-\Om)\subset U$;
\item[(ii)] $\nu(\partial\Om_{i+1} - \partial\Om_{i})\leq \de$;
\item[(iii)] $\Ac(\Om_i)\leq \Ac(\Om)+\de$, for $i=1, \cdots, m$.
\end{itemize}
then $\Ac(\Om_m)\geq\Ac(\Om)-\ep$.

We say that a varifold $V\in\mathcal{V}_n^G(M)$ is $(G,c)$-almost minimizing in $U$ if there exist sequences $\ep_i\rightarrow0$, $\de_i\rightarrow0$, and $\Omega_i\in\mathfrak{a}^{c,G}_n(U;\ep_i,\de_i;\F)$, such that ${\bf F}(|\partial\Omega_i|,V)\leq\ep_i$.
\end{definition}
A simple consequence of the definition is that $(G,c)$-almost minimizing in $U$ implies $c$-bounded first variation in $U$:

\begin{lemma}\label{(G,c)-almost minimizing implies c-bounded first variation}
Let $V\in\mathcal{V}_n^G(M)$ be $(G,c)$-almost minimizing in $U$, then $V$ has $c$-bounded first variation in $U$.
\end{lemma}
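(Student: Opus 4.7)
The plan is to argue by contradiction, following the classical scheme but using Lemma \ref{$c$-bounded first variation} to reduce the hypothesis to $G$-invariant test vector fields so that the flow deformation preserves $G$-invariance of the Caccioppoli sets $\Om_i$.

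By Lemma \ref{$c$-bounded first variation}, it suffices to verify the $(G,c)$-bounded first variation inequality, i.e. to show $|\de V(X)|\leq c\int_M|X|\,d\mu_V$ for every $X\in\X^G(U)$. Suppose toward a contradiction that there exists $X\in\X^G(U)$ with $\spt(X)\subset U$ and some constant $c_0>0$ such that
\[
\de V(X)+c\int_M |X|\,d\mu_V \leq -c_0<0
\]
(after possibly replacing $X$ by $-X$). Let $\Phi_X(t)$ be the one-parameter family of $G$-equivariant diffeomorphisms generated by $X$, and set $\Om_i^t=(\Phi_X(t))_\#\Om_i$, which lies in $\C^G(M)$ by Proposition \ref{Prop-G-push forward}. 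Using ${\bf F}(|\partial\Om_i|,V)\leq \ep_i\to 0$ together with continuity of $W\mapsto \de W(X)$ and $W\mapsto\|W\|$ in the ${\bf F}$-metric, plus the first variation formula \eqref{E: 1st variation for Ac} applied to $\Om_i^t$, I would show that there exist $T>0$ and $i_0\in\N$ (independent of $t\in[0,T]$) such that for all $i\geq i_0$ and $t\in[0,T]$,
\[
\frac{d}{dt}\Ac(\Om_i^t)\leq -\tfrac{c_0}{2}, \qquad\text{hence}\qquad \Ac(\Om_i^t)\leq \Ac(\Om_i)-\tfrac{c_0 t}{2}.
\]

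Next, for each large $i$, partition $[0,T]$ into $m_i$ equal subintervals of length $T/m_i$ and define the discrete sequence $\Om_{i,k}=\Om_i^{kT/m_i}$ for $k=0,1,\dots,m_i$. Since $X\in \X^G(U)$ is supported in $U$, each $\Om_{i,k}$ agrees with $\Om_i$ outside $U$, so condition (i) in Definition \ref{$(G,c)$-almost minimizing varifolds} holds. Because the map $t\mapsto \partial\Om_i^t$ is continuous in the flat topology, by choosing $m_i$ sufficiently large we can guarantee $\F(\partial\Om_{i,k+1}-\partial\Om_{i,k})\leq \de_i$, giving (ii); and since $\Ac(\Om_i^t)\leq \Ac(\Om_i)$ along our path, condition (iii) holds trivially (in fact with strict decrease) for all $i$ large.

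Now apply the $(G,c)$-almost minimizing property of $\Om_i\in \mathfrak{a}^{c,G}_n(U;\ep_i,\de_i;\F)$ to the competitor sequence $\Om_i=\Om_{i,0},\Om_{i,1},\dots,\Om_{i,m_i}$ to conclude $\Ac(\Om_{i,m_i})\geq \Ac(\Om_i)-\ep_i$. On the other hand, the differential estimate above gives $\Ac(\Om_{i,m_i})=\Ac(\Om_i^T)\leq \Ac(\Om_i)-c_0T/2$. Choosing $i$ so large that $\ep_i<c_0T/4$ yields a contradiction, completing the argument. The main technical point to be careful with is the uniform (in $i$) choice of the time $T$ and the uniform upper bound on $\frac{d}{dt}\Ac(\Om_i^t)$; this relies on the ${\bf F}$-convergence $|\partial\Om_i|\to V$ together with a standard estimate showing that $\sup_{t\in[0,T]}{\bf F}(|\partial\Om_i^t|,|\partial\Om_i|)\to 0$ as $T\to 0$ uniformly in $i$, using $\|X\|_{C^1}<\infty$.
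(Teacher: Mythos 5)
Your proposal is correct and follows essentially the same route as the paper: reduce to $G$-invariant test fields via Lemma \ref{$c$-bounded first variation}, deform the approximating sets $\Om_i$ along the flow of the bad $G$-vector field $X$ to force a definite drop in $\Ac$ while staying in $\C^G(M)$, and contradict the defining property of $\mathfrak{a}^{c,G}_n(U;\ep_i,\de_i;\F)$. The only difference is that you spell out the discretization of the continuous deformation into an admissible finite competitor sequence, a step the paper leaves implicit.
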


\begin{proof}
This lemma follows similarly to \cite[Lemma 5.2]{zhou and zhu}. We include the details for completeness.
By Lemma \ref{$c$-bounded first variation}, $V$ has $c$-bounded first variation in $U$ if and only if $V$ has $(G,c)$-bounded first variation in $U$. Suppose by contradiction that $V$ does not have $(G,c)$-bounded first variation in $U$, then there exist $\ep_0>0$ and a smooth vector field $X\in\mathfrak{X}^G(U)$ compactly supported in $U$, such that
$$\int_{G_n(M)} \Div_{S}X(x)dV(x,S)\leq -(c+\ep_0) \int_M|X|d\mu_V$$
By the continuity and the first variation formula (\ref{E: 1st variation for Ac}) for $\Ac$, we can find $\ep_1>0$ small enough depending only on $\ep_0$, $V$, and $X$, such that if $\Omega\in\C^G(M)$ with ${\bf F}(|\partial\Omega|,V)<2\ep_1$, then
$$\de\Ac|_{\Om}(X)\leq \int_{\partial\Omega} \Div_{\partial\Omega}Xd\mu_{\partial\Omega}+c\int_{\partial\Omega}|X|d\mu_{\partial\Omega}\leq -\frac{\ep_0}{2}\int_M|X|d\mu_V<0.$$
If ${\bf F}(|\partial\Omega|,V)<\ep_1$, we then consider the deformation of $\Omega$ along the flow $\{\Phi_{X}(t): t\in [0,T]\}$ generated by the $G$-vector field $X$ for a uniform short time $T>0$.
Just as in Proposition \ref{G-equivariant Deformation}, we can get a $1$-parameter family $\{\Omega_t\in\C^G(M): t\in[0,T]\}$, such that $t\rightarrow \partial\Omega_t$ is continuous under the $\bf F$-metric, with ${\rm spt}(\Omega_t-\Omega)\subset U$, ${\bf F}(|\partial\Omega_t|,V)<2\ep_1$, and $\Ac(\Om_t)\leq \Ac(\Om_0)=\Ac(\Om)$ for $t\in [0,T]$, but with $\Ac(\Om_T)\leq \Ac(\Om)-\ep_2$ for some $\ep_2>0$ depending only on $\ep_0$, $\ep_1$, $V$, $X$.

This implies that if $\Omega\in\C^G(M)$ and ${\bf F}(|\partial\Omega|,V)<\ep=\frac{1}{2}\min\{\ep_1,\ep_2\}$, then $\Omega\notin\mathfrak{a}^{c,G}_n(U;\ep,\de;\F)$ for any $\de>0.$ This contradicts to the assumption that $V\in\mathcal{V}_n^G(M)$ is $(G,c)$-almost minimizing in $U$.
\end{proof}

\begin{definition}\label{(G,c)-almost minimizing in annuli}
 A varifold $V\in\mathcal{V}_n^G(M)$ is said to be {\em $(G,c)$-almost minimizing in small annuli}, if for each $p\in M$, there exists $r=r(p)>0$ such that  $V$ is $(G,c)$-almost minimizing in $\an(p,s,t)$ for any $\an(p,s,t)\in\ann_{r}(p)$.		
\end{definition}

Note the \emph{fineness} of discrete homotopy sequence of mappings $\mf(\phi)$ is defined under the $\M$-norm, while the $(G,c)$-almost minimizing varifolds are defined under the $\F$-metric. We need the following equivalence results of almost minimizing concepts using different topologies.

\begin{theorem}\label{Thm: equivalence-a.m.v}
	Let $V\in\mathcal{V}^G_n(M)$.
	The following statements satisfy $(a) \Rightarrow (b) \Rightarrow (c)\Rightarrow(d)$:
	\begin{itemize}
		\item[(a)] $V$ is $(G,c)$-almost minimizing in $U$;
		\item[(b)] for any $\epsilon>0$, there exist $\delta>0$ and $\Om\in \mathfrak{a}^{c,G}_n(U;\epsilon,\delta; {\bf F} )$ such that ${\bf F}(V,|\partial \Om|)<\epsilon$;
		\item[(c)] for any $\epsilon>0$, there exist $\delta>0$ and $\Om\in \mathfrak{a}^{c,G}_n(U;\epsilon,\delta; {\bf M} )$ such that ${\bf F}(V,|\partial \Om|)<\epsilon$;
		\item[(d)] $V$ is $(G,c)$-almost minimizing in $W$ for any relative open $G$-subset $W\subset\subset U$ with ${\rm Clos}(W)\subset M^{reg}$, where $M^{reg}$ is the union of
           all principal orbits.
	\end{itemize}
\end{theorem}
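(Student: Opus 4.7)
The implications (a) $\Rightarrow$ (b) $\Rightarrow$ (c) are direct consequences of the comparisons $\F(S-T)\le \mathbf{F}(S,T)\le 2\mathbf{M}(S-T)$ noted in Section~\ref{notation}. Since a chain satisfying $\nu_2(\partial\Om_{i+1}-\partial\Om_i)\le \delta$ for a stronger norm automatically satisfies the same bound in a weaker norm, the defining condition becomes stricter as $\nu$ decreases, so one has the set-theoretic inclusions $\mathfrak{a}^{c,G}_n(U;\ep,\delta;\F)\subset \mathfrak{a}^{c,G}_n(U;\ep,\delta;\mathbf{F})\subset \mathfrak{a}^{c,G}_n(U;\ep,\delta/2;\mathbf{M})$. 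Plugging the approximating sequence $\Om_i\in\mathfrak{a}^{c,G}_n(U;\ep_i,\delta_i;\F)$ from (a) into each inclusion in turn gives (b) and then (c).

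The substance of the theorem is (c) $\Rightarrow$ (d). The plan is to follow the interpolation argument of Pitts (adapted to the CMC setting by Zhou--Zhu in \cite[Theorem~5.3]{zhou and zhu}) and to check that every step admits a $G$-equivariant version using the tools of Section~\ref{Sec-G-current}. Fix a relatively open $G$-set $W\subset\subset U$ with $\Clos(W)\subset M^{reg}$ and fix $\ep>0$. Given $\ep'\in(0,\ep)$ (to be chosen), (c) furnishes $\delta>0$ and $\Om\in \mathfrak{a}^{c,G}_n(U;\ep',\delta;\mathbf{M})$ with $\mathbf{F}(V,|\partial\Om|)<\ep'$. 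I would then take an arbitrary $\F$-chain $\Om=\Om_0,\Om_1,\dots,\Om_m$ of $G$-Caccioppoli sets with $\spt(\Om_i-\Om)\subset W$, $\F(\partial\Om_{i+1}-\partial\Om_i)\le \eta$, and $\Ac(\Om_i)\le \Ac(\Om)+\eta$ (for some $\eta$ to be determined), and refine it into a single $\mathbf{M}$-small chain in $U$ along which $\Ac$ stays close to $\Ac(\Om)$.

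The refinement is produced by the $G$-invariant isoperimetric lemma (Proposition~\ref{isoperimetric}): for $\eta<\nu_M$ each jump $\partial\Om_{i+1}-\partial\Om_i$ bounds a unique $G$-invariant current $Q_i\in\bI_{n+1}^G(M)$ with $\mathbf{M}(Q_i)=\F(\partial\Om_{i+1}-\partial\Om_i)\le\eta$. Since $\Clos(W)\subset M^{reg}$, the distance-to-orbit function $r(x)=\dist(x,M\setminus W)$ is $G$-invariant and Lipschitz, and one can slice $Q_i$ by $r$ via Proposition~\ref{slice} to stack finitely many level-set slices on top of $\Om_i$. This produces a chain $\Om_i=T_{i,0},T_{i,1},\dots,T_{i,N_i}=\Om_{i+1}$ of $G$-Caccioppoli sets in $\C^G(M)$ with $\spt(T_{i,j}-\Om)\subset U$ (the collar $U\setminus W$ absorbing the small support of $Q_i$), $\mathbf{M}(\partial T_{i,j+1}-\partial T_{i,j})\le\delta$, and $\Ac(T_{i,j})\le \max\{\Ac(\Om_i),\Ac(\Om_{i+1})\}+(1+|c|)\mathbf{M}(Q_i)\le \Ac(\Om)+C\eta$ for a universal $C$. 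Concatenating these refinements over $i$ yields a single $\mathbf{M}$-chain in $U$ emanating from $\Om$, to which the hypothesis $\Om\in\mathfrak{a}^{c,G}_n(U;\ep',\delta;\mathbf{M})$ applies once $\eta$ is chosen small enough relative to $\delta$, $\ep$, and $\nu_M$. This delivers $\Ac(\Om_m)\ge \Ac(\Om)-\ep'$, and a final comparison of $\Ac(\Om)$ with $\Ac(\Om_0)$ via the same isoperimetric bound gives $\Ac(\Om_m)\ge \Ac(\Om_0)-\ep$.

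The main obstacle is the simultaneous $G$-invariance and support control of the entire refinement: the fillings $Q_i$, the slicing function, and the resulting intermediate sets $T_{i,j}$ must all be $G$-invariant and supported in $U$. The invariance is handled by Propositions~\ref{G-current compactness}, \ref{restrict}, \ref{Prop-G-push forward}, \ref{slice}, and \ref{isoperimetric}, which provide $G$-equivariant replacements for the classical compactness, pushforward, slicing, and isoperimetric tools. The support condition forces the shrinkage from $U$ to $W\subset\subset U$, and the assumption $\Clos(W)\subset M^{reg}$ gives a uniform positive normal injectivity radius of the principal orbit foliation along $\Clos(W)$, so that $r(x)=\dist(x,M\setminus W)$ is genuinely $G$-invariant Lipschitz with slice-regular level sets; this is the only place where the restriction to the principal part is used. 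With these ingredients in place, the remainder of the interpolation carries over verbatim from \cite[Theorem~5.3]{zhou and zhu}.
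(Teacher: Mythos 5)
Your handling of (a)$\Rightarrow$(b)$\Rightarrow$(c) via the inequalities $\F(S-T)\leq \mathbf{F}(S,T)\leq 2\M(S-T)$ is correct and matches the paper, and your overall strategy for (c)$\Rightarrow$(d) — refine an arbitrary $\F$-small chain supported in $W$ into an $\M$-small chain supported in $U$ and then invoke the hypothesis on $\Om$ — is also the paper's strategy, which reduces everything to a $G$-equivariant boundary-type interpolation lemma and then runs the proof of \cite[Lemma A.1]{zhou and zhu}.

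However, the explicit refinement you describe does not work as written, and the step that fails is exactly the nontrivial content of the interpolation lemma. If $Q_i$ is the isoperimetric filling of $\partial\Om_{i+1}-\partial\Om_i$ and you set $T_{i,j}=\Om_i+Q_i\llcorner\{r<t_j\}$, then
\[
\partial(T_{i,j+1}-T_{i,j})=\langle Q_i,r,t_{j+1}\rangle-\langle Q_i,r,t_j\rangle+(\partial\Om_{i+1}-\partial\Om_i)\llcorner\{t_j<r<t_{j+1}\},
\]
and while the slices can be chosen with small mass by the coarea inequality, the last term is controlled only by $\M(\partial\Om_{i+1})+\M(\partial\Om_i)$; the hypothesis $\F(\partial\Om_{i+1}-\partial\Om_i)\leq\eta$ gives no smallness of its \emph{mass}. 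So the stacked chain is not $\M$-fine, and your claimed bound on $\Ac(T_{i,j})$ is likewise unjustified. Making the steps genuinely $\M$-small requires the full Pitts-type interpolation machinery (cut-and-paste together with the deformation theorem), and in the equivariant setting this is precisely what the paper imports by combining \cite[Appendix B]{WTR} with \cite[Section 5.1]{zhou}. This is also where the hypothesis ${\rm Clos}(W)\subset M^{reg}$ actually enters: not through the Lipschitz regularity of ${\rm dist}(\cdot,M\setminus W)$, which holds for any $G$-set, but through the uniform positive lower bound on the injectivity radius of the orbits and Weyl's tube formula needed for the equivariant deformation estimates. As stated, your proposal treats the refinement as an elementary slicing argument and omits the essential mass-control step.
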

\begin{proof}
	It's clear that $(a) \Rightarrow (b) \Rightarrow (c)$ by definitions.
	
	To show $(c)\Rightarrow (d)$, we first need an equivariant version of boundary type interpolation lemma (i.e. using boundaries of $G$-invariant Caccioppoli sets, rather than integral cycles $\mathcal{Z}_n(M)$, as interpolating sequences in \cite[Lemma 3.8]{pitts}).
	In fact, \cite[Appendix B]{WTR} has shown how to get interpolating sequences consisting integral $G$-cycles $T\in\mathcal{Z}^G_n(M)$.
	In the arguments of \cite[Appendix B]{WTR}, the assumption that every orbit in ${\rm Clos}(W)$ is principal plays a necessary role in getting a positive lower bound of ${\rm Inj}(G\cdot p)$ the injective radius of orbits and the Weyl's tube formula \cite[(18)]{WTR}.
	Additionally, \cite[Proposition 5.5]{zhou} has essentially the same idea as \cite[Lemma 3.7, 3.8]{pitts} except using Caccioppoli sets, rather than integral cycles $\mathcal{Z}_n(M)$, as interpolating sequences.
	Thus, we can combine the arguments in \cite[Appendix B]{WTR} and \cite[Section 5.1]{zhou} to deduce an equivariant version of \cite[Proposition 5.3]{zhou}.
	 With the equivariant version of \cite[Proposition 5.3]{zhou} ($G$-boundary type interpolation lemma), the proof of \cite[Lemma A.1]{zhou and zhu} would carry over for $G$-invariant Caccioppoli sets.
\end{proof}

\begin{corollary}\label{Cor:equivalence (G,c)-almost minimizing in annuli}
	Let $V\in\mathcal{V}^G_n(M)$.
	Suppose every non-principal orbit is isolated.
	Then $V$ is $(G,c)$-almost minimizing in annuli if and only if for any $p\in M$ there exists $r(p)>0$ so that for any $0<s<t<r(p)$ and $\epsilon>0$ there exist $\delta>0$ and $T\in \mathfrak{a}^{c,G}_n(\an(p,s,t);\epsilon,\delta; {\bf M} )$ with ${\bf F}(V,|T|)<\epsilon$.
\end{corollary}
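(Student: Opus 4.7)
The plan is to derive both directions of the equivalence from Theorem~\ref{Thm: equivalence-a.m.v}, using the isolatedness hypothesis to arrange that every relevant annulus has its closure inside the principal stratum $M^{reg}$. The forward direction is immediate: assuming $V$ is $(G,c)$-almost minimizing in annuli, for each $p\in M$ take $r(p)>0$ from Definition~\ref{(G,c)-almost minimizing in annuli}; then for any $0<s<t<r(p)$, $V$ satisfies hypothesis $(a)$ of Theorem~\ref{Thm: equivalence-a.m.v} on $U=\an(p,s,t)$, and the implication $(a)\Rightarrow(c)$ produces the required Caccioppoli set witness with respect to the $\M$-norm.

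For the reverse direction I would first pick $r(p)>0$ so that $\Clos(\an(p,s,t))\subset M^{reg}$ for every $0<s<t<r(p)$. When $p\in M^{reg}$ this is automatic by openness of $M^{reg}$. When $p\notin M^{reg}$, the hypothesis that every non-principal orbit is isolated lets me choose $r(p)$ small enough that $\sB^G_{r(p)}(p)\setminus G\cdot p\subset M^{reg}$; since $s>0$, the closure $\Clos(\an(p,s,t))$ avoids $G\cdot p$ entirely and hence lies inside $M^{reg}$.

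Now given $0<s<t<r(p)$, I pick $\eta>0$ with $0<s-\eta<t+\eta<r(p)$, and set $U=\an(p,s-\eta,t+\eta)$ and $W=\an(p,s,t)$, so that $W\subset\subset U$ and $\Clos(W)\subset M^{reg}$. The standing hypothesis applied with $s'=s-\eta$, $t'=t+\eta$ supplies exactly condition $(c)$ of Theorem~\ref{Thm: equivalence-a.m.v} on $U$; the implication $(c)\Rightarrow(d)$ then shows $V$ is $(G,c)$-almost minimizing on $W=\an(p,s,t)$, which is the desired conclusion. Since $s,t$ were arbitrary in $(0,r(p))$, $V$ is $(G,c)$-almost minimizing in small annuli around $p$.

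The main obstacle I anticipate is the selection of $r(p)$ in the reverse direction: this is precisely where the isolatedness of non-principal orbits is used, and without it $\Clos(\an(p,s,t))$ could intersect a non-principal orbit, preventing application of $(c)\Rightarrow(d)$ in Theorem~\ref{Thm: equivalence-a.m.v}. Once that choice is made the remainder is bookkeeping: verifying $W\subset\subset U$ and that the $\M$-metric data provided by the hypothesis on the slightly enlarged annulus $U$ are legitimate inputs for condition $(c)$ there.
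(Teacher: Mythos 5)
Your proposal is correct and follows essentially the same route as the paper: necessity via Theorem \ref{Thm: equivalence-a.m.v} $(a)\Rightarrow(c)$, and sufficiency by using isolatedness of non-principal orbits to force $\Clos(\an(p,s,t))\subset M^{reg}$ and then invoking $(c)\Rightarrow(d)$. Your explicit enlargement to $U=\an(p,s-\eta,t+\eta)$ to secure the compact containment $W\subset\subset U$ is a detail the paper leaves implicit, but it is the right bookkeeping.
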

\begin{proof}
	The necessity part comes from Theorem \ref{Thm: equivalence-a.m.v} (a)$\Rightarrow$(c).
	As for the sufficiency part, one notices that if $G\cdot p$ is a principal orbit in $M$ then there naturally exists a $G$-neighborhood of $G\cdot p$ in $M^{reg}$ by the openness of $M^{reg}$.
	Additionally, since every non-principal orbit is isolated, if $G\cdot p$ is non-principal then there exists a $G$-neighborhood $B_{r}^G(p)$ of $G\cdot p$ so that $B^G_r(p)\setminus G\cdot p\subset M^{reg}$.
	Hence, there always exists ${r}(p)>0$ so that ${\rm Clos}(\an(p,s,t))\subset M^{reg}$ for any $0<s<t<r(p)$.
	Therefore, the sufficiency comes from Theorem \ref{Thm: equivalence-a.m.v} (c)$\Rightarrow$(d).
\end{proof}

In above corollary, we use the assumption that every non-principal orbit is isolated to guarantee that ${\rm Clos}(\an(p,s,t))\subset M^{reg}$ for any $p\in M$ and $0<s<t$ small enough. To weaken the constraints on non-principal orbits, we can consider the following notations and definitions just as in \cite[Section 5]{WTR}.
Firstly, we denote
\begin{equation}\label{Eq:Pp}
	P_p := \left\{ \begin{array}{ll}
 		p & \textrm{if $p\in M^{reg}$}\\
 		M_p & \textrm{if $p\in M\setminus M^{reg}$},
  	\end{array} \right.
\end{equation}
where $M_p$ is the connected component of $M\setminus M^{reg}$ containing $p$.
Hence, for any $p\in M$, there exists $r_1=r_1(p)>0$ so that
\begin{equation}\label{Eq:regular annuli}
	{\rm Clos}(\an(P_p,s,t))\subset M^{reg},~\forall \an (P_p,s,t)\in\ann_{r_1}(P_p),
\end{equation}
by regarding $M\setminus M^{reg}$ as a whole.
Since the closure of such annulus is contained in $M^{reg}$, $G$-annulus $\an(P_p,s,t)$ are called  {\em regular annulus}.
In particular, it's easy to check that $G\cdot P_p \equiv G\cdot p$ when every non-principal orbit is isolated.

\begin{definition}\label{Def:a.m. in regular annuli}
	A $G$-varifold $V \in \mathcal{V}^G_n(M)$ is said to be {\it $(G,c)$-almost minimizing in regular annuli} if
	for each $p \in M$, there exists $r=r(p)\in (0,r_1(p))$ such that $V$ is $(G,c)$-almost minimizing in $\an (P_p,s,t)$ for any $\an (P_p,s,t)\in\ann_r(P_p) $, where $r_1(p)$ is given by (\ref{Eq:regular annuli}).
\end{definition}

Combining (\ref{Eq:regular annuli}) with Theorem \ref{Thm: equivalence-a.m.v}, it's easy to get the following corollary:
\begin{corollary}\label{Cor:equivalence-a.m.v in regular annuli}
	Let $V\in\mathcal{V}^G_n(M)$.
	Then $V$ is {\it $(G,c)$-almost minimizing in regular annuli} if and only if for any $p\in M$ there exists $r=r(p)\in (0,r_1(p))$ so that for any $0<s<t<r(p)$ and $\epsilon>0$ there exists $\delta>0$ and $T\in \mathfrak{a}^{c,G}_n(\an(P_p,s,t);\epsilon,\delta; {\bf M} )$ with ${\bf F}(V,|T|)<\epsilon$.
\end{corollary}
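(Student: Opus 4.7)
The plan is to invoke Theorem \ref{Thm: equivalence-a.m.v} annulus by annulus, exploiting the defining feature of regular annuli---namely that their closures lie inside $M^{reg}$ by construction (\ref{Eq:regular annuli}). The corollary is essentially a packaging statement that lets us interchange the $\F$-metric and $\M$-norm formulations of the $(G,c)$-almost minimizing property on regular annuli, and no new geometric input beyond Theorem \ref{Thm: equivalence-a.m.v} is needed.

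For the necessity direction, I would start from the hypothesis that $V$ is $(G,c)$-almost minimizing in every $\an(P_p,s,t)\in\ann_r(P_p)$ and apply the implication (a)$\Rightarrow$(c) of Theorem \ref{Thm: equivalence-a.m.v} with $U=\an(P_p,s,t)$. This immediately produces, for each $\epsilon>0$, a $\delta>0$ and $\Om\in\mathfrak{a}^{c,G}_n(\an(P_p,s,t);\epsilon,\delta;{\bf M})$ with ${\bf F}(V,|\partial\Om|)<\epsilon$, after which we take $T=\partial\Om$.

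For the sufficiency direction a small thickening trick is required, because (c)$\Rightarrow$(d) of Theorem \ref{Thm: equivalence-a.m.v} only delivers the almost minimizing conclusion on relatively compact $G$-subsets $W\subset\subset U$ whose closure is contained in $M^{reg}$. Given any $0<s<t<r(p)$, I would therefore pick $s',t'$ with $0<s'<s<t<t'<r(p)$, feed the parameters $s',t'$ into the hypothesis to obtain condition (c) of Theorem \ref{Thm: equivalence-a.m.v} for the enlarged annulus $U=\an(P_p,s',t')$, and then note that $W:=\an(P_p,s,t)$ is an open $G$-subset with $W\subset\subset U$ and ${\rm Clos}(W)\subset M^{reg}$ by (\ref{Eq:regular annuli}). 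The implication (c)$\Rightarrow$(d) then yields that $V$ is $(G,c)$-almost minimizing in $W=\an(P_p,s,t)$; varying $(s,t)$ over $(0,r(p))^2$ completes the verification of Definition \ref{Def:a.m. in regular annuli}.

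I do not expect any serious obstacle. The only point requiring a line of care is the thickening in the sufficiency direction: one must arrange that each annulus of interest sits strictly inside a slightly larger annulus whose closure is still in $M^{reg}$, which is exactly what the constraint $r(p)<r_1(p)$ and definition (\ref{Eq:regular annuli}) guarantee. All the real work---the $G$-equivariant boundary-type interpolation building on \cite[Appendix B]{WTR} and \cite[Proposition 5.5]{zhou}, together with the adaptation of \cite[Lemma A.1]{zhou and zhu}---has already been absorbed into Theorem \ref{Thm: equivalence-a.m.v}.
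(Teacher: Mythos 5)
Your proposal is correct and follows essentially the same route as the paper: necessity via Theorem \ref{Thm: equivalence-a.m.v} (a)$\Rightarrow$(c), sufficiency via (c)$\Rightarrow$(d) using that ${\rm Clos}(\an(P_p,s,t))\subset M^{reg}$ for $t<r_1(p)$. The only difference is that you make explicit the thickening step $\an(P_p,s,t)\subset\subset\an(P_p,s',t')$ needed so that (d) applies to the annulus itself rather than only to its relatively compact subsets; the paper leaves this implicit, and your handling of it is the right one.
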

\begin{proof}
	The necessity part comes from Theorem \ref{Thm: equivalence-a.m.v} (a)$\Rightarrow$(c).
	As for the sufficiency part, one notices that ${\rm Clos}(\an(P_p,s,t))\subset M^{reg}$ for any $0<s<t<r_1(p)$.
	Therefore, the sufficiency comes from Theorem \ref{Thm: equivalence-a.m.v} (c)$\Rightarrow$(d).
\end{proof}

\begin{theorem}\label{existence (G,c)-almost minimizing in annuli}
(Existence of $(G,c)$-almost minimizing varifold)\\
 Let $\Pi\in\pi^{\#}_{1}\big(\C^G(M, \M), \{0\}\big)$ with $\bL^c(\Pi)>0$. There exists a nontrivial varifold $V\in\mathcal{V}_n^G(M)$ such that
\begin{itemize}
\item[(i)] $V\in C(S)$ for some critical sequence $S$ of  $\Pi$;
\item[(ii)] $V$ has $c$-bounded first variation in $M$;
\item[(iii)] $V$ is {\it $(G,c)$-almost minimizing in regular annuli}.
\end{itemize}
\end{theorem}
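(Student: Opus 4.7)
The plan is to combine the tightening result of Proposition \ref{Tightening} with an equivariant version of the Almgren--Pitts combinatorial argument on annuli, following \cite[Theorem 5.3]{zhou and zhu} and its $G$-invariant adaptation in \cite[Section 5]{WTR}. I first invoke Lemma \ref{L:Existence of critical sequence} to fix an initial critical sequence $S^\ast$, and then apply Proposition \ref{Tightening} to produce a critical sequence $S=\{\varphi_i\}_{i\in\N}$ with $C(S)\subset C(S^\ast)$ such that every $V\in C(S)$ has $c$-bounded first variation in $M$. Since $\bL^c(\Pi)>0$, a standard diagonal-sequence argument as in \cite[4.1(4)]{pitts} yields nontriviality of $C(S)$, so properties (i) and (ii) hold for every candidate $V\in C(S)$; the remaining task is to pick one which also satisfies (iii).

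For (iii) I argue by contradiction, assuming no $V\in C(S)$ is $(G,c)$-almost minimizing in regular annuli. By Corollary \ref{Cor:equivalence-a.m.v in regular annuli}, to each $V\in C(S)$ there correspond a point $p=p(V)\in M$, a radius $r_V\in(0,r_1(p))$, and $\epsilon_V>0$ such that for every $0<s<t<r_V$ no $T\in \mathfrak{a}^{c,G}_n(\an(P_p,s,t);\epsilon_V,\delta;{\bf M})$ satisfies ${\bf F}(V,|T|)<\epsilon_V$ for any $\delta>0$. Using the ${\bf F}$-compactness of $C(S)$ (Proposition \ref{G-varifold compactness}) together with the freedom to shrink and nest annuli around each $G\cdot P_p$, I extract a finite list of points $p_1,\dots,p_N\in M$ and corresponding regular $G$-annuli $\an(P_{p_k},s_k,t_k)$ with pairwise disjoint closures, such that every $V\in C(S)$ fails to be $(G,c)$-almost minimizing in at least one annulus of the list.

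The combinatorial deformation then runs as follows: for $i$ large and for $x$ in the domain of $\varphi_i$ with $|\partial\varphi_i(x)|$ close to $C(S)$, I select an annulus in which $|\partial\varphi_i(x)|$ fails almost-minimality and use this failure to produce a $G$-invariant Caccioppoli deformation supported in that annulus which strictly decreases $\Ac$ by a definite amount; disjointness of the annuli allows these local deformations to be run simultaneously. Because ${\rm Clos}(\an(P_{p_k},s_k,t_k))\subset M^{reg}$ by (\ref{Eq:regular annuli}), the $G$-invariant boundary-type interpolation theorem invoked in the proof of Theorem \ref{Thm: equivalence-a.m.v} converts the resulting continuous families of $G$-invariant Caccioppoli sets into a new $(1,\M)$-homotopy sequence $\{\varphi'_i\}$, homotopic to $S$, with fineness tending to zero and satisfying $\bL^c(\{\varphi'_i\})\leq \bL^c(\Pi)-\eta$ for a uniform $\eta>0$, contradicting $\bL^c(\Pi)=\bL^c(S)$.

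The main obstacle I anticipate is this combinatorial step: one must combine many local $G$-equivariant deformations into a single coherent discrete $(1,\M)$-homotopy with uniform fineness control, and each deformation must both respect the $G$-action and be supported where the $G$-equivariant discretization-interpolation machinery of \cite[Appendix B]{WTR} applies, i.e.\ inside $M^{reg}$. This is precisely why Definition \ref{Def:a.m. in regular annuli} was set up so that $P_p$ absorbs the entire component of $M\setminus M^{reg}$ containing $p$: it guarantees that the annuli $\an(P_p,s,t)$ have closure in $M^{reg}$, allowing the equivariant interpolation lemma to go through and the whole combinatorial scheme of \cite[Theorem 5.3]{zhou and zhu} to be transcribed to the $G$-invariant Caccioppoli setting.
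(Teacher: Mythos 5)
Your proposal follows essentially the same route as the paper: properties (i) and (ii) are obtained from the tightening result (Proposition \ref{Tightening}), and (iii) is obtained by transcribing the Almgren--Pitts combinatorial argument of \cite[Theorem 5.6(iii)]{zhou and zhu} to the equivariant setting, using the equivalence results (Theorem \ref{Thm: equivalence-a.m.v}, Corollary \ref{Cor:equivalence-a.m.v in regular annuli}) and the $G$-invariant interpolation machinery of \cite{WTR}, with the regular annuli $\an(P_p,s,t)\subset M^{reg}$ playing exactly the role you identify. The paper's proof is essentially a citation of this scheme, so your sketch is a faithful (and somewhat more detailed) reconstruction of the intended argument.
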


\begin{proof}
It is easy to see that (i), (ii) come from Theorem \ref{T:existence of nontrivial G-sweepouts} and Proposition \ref{Tightening}. The proof of \cite[Theorem 5.6(iii)]{zhou and zhu} would carry over under $G$-invariant restrictions with Theorem \ref{Thm: equivalence-a.m.v} and \cite[Theorem 5.10]{WTR} in places of \cite[Theorem 5.5]{WTR} and \cite[Theorem 4.10]{pitts}.
\end{proof}

\section{Regularity for $(G,c)$-min-max varifold}\label{Regularity for $(G,c)$-min-max varifold}
In this section, we show Regularity for $(G,c)$-min-max varifold.
First of all, let's begin with the existence of $(G,c)$-replacement for $(G,c)$-almost minimizing varifolds.

\medskip
\subsection{Good $(G,c)$-replacement}~

\begin{lemma}[A constrained minimization problem-I]
\label{L:minimisation}
Given $\epsilon, \delta>0$, an open $G$-set $U\subset M$ and any $\Omega \in \mathfrak{a}^{c,G}_n(U;\ep,\de;\F)$, fix a compact $G$-subset $K\subset U$.
Let $\mathcal{C}^G_\Omega$ be the set of all $\La\in\C^G(M)$ such that there exists a sequence $\Om=\Om_0, \Om_1, \cdots, \Om_m=\La$ in $\C^G(M)$ satisfying:
 \begin{itemize}
\item[(a)] $\spt(\Om_i-\Om)\subset K$;
\item[(b)] $\F(\partial\Om_i - \partial\Om_{i+1})\leq \de$;
\item[(c)] $\Ac(\Om_i)\leq \Ac(\Om)+\de$, for $i=1, \cdots, m$.
\end{itemize}
Then there exists $\Omega^* \in \C^G(M)$ such that:
\begin{itemize}
\item[(i)] $\Om^* \in \C^G_\Om$, and \[ \Ac(\Om^*)=\inf\{\Ac(\La):\ \La\in\C^G_\Om\},\]
\item[(ii)] $\Om^*$ is locally $(G,\Ac)$-minimizing in ${\rm Int}(K)$,
\item[(iii)] $\Om^*\in \mathfrak{a}^{c,G}_n(U;\ep,\de;\F)$.
\end{itemize}
\end{lemma}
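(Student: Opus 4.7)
The plan is to adapt the non-equivariant construction of \cite[Lemma 5.3]{zhou and zhu}, replacing the classical compactness and isoperimetric inputs with their $G$-invariant analogs from Section~\ref{preliminary} so that every competitor remains in $\C^G(M)$.

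First I would produce the minimizer. Pick a minimizing sequence $\{\La_k\}\subset\C^G_\Om$ with $\Ac(\La_k)\to\inf_{\La\in\C^G_\Om}\Ac(\La)$. Since $\spt(\La_k-\Om)\subset K$ and $\Ac(\La_k)\le\Ac(\Om)+\de$, the masses $\M(\partial[[\La_k]])$ are uniformly bounded, so Proposition~\ref{G-current compactness} supplies a subsequential flat limit $\Om^*\in\C^G(M)$ with $\spt(\Om^*-\Om)\subset K$. Lower semicontinuity of mass together with $[[\La_k]]\to[[\Om^*]]$ as currents (which follows from flat convergence of the boundaries and agreement outside $K$, via Proposition~\ref{isoperimetric} to control volume differences) then yields $\Ac(\Om^*)\le\liminf\Ac(\La_k)$. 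To check that $\Om^*$ itself lies in $\C^G_\Om$ I would extend the chain from $\Om$ to $\La_k$ (which exists because $\La_k\in\C^G_\Om$) by one extra step to $\Om^*$: for $k$ large, $\F(\partial\La_k-\partial\Om^*)\le\de$ and $\Ac(\Om^*)\le\Ac(\La_k)\le\Ac(\Om)+\de$, so conditions (a)--(c) are satisfied. This gives conclusion (i) and also supplies a witnessing chain from $\Om$ to $\Om^*$ that will be reused below.

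Next, conclusion (ii) follows by a one-step competitor argument. For $p\in\interior(K)$ choose a small $G$-neighborhood $W$ of $G\cdot p$ with $\Clos(W)\subset\interior(K)$, small enough that any $\Om'\in\C^G(M)$ differing from $\Om^*$ only in $W$ satisfies $\F(\partial\Om'-\partial\Om^*)\le\de$. If in addition $\Ac(\Om')<\Ac(\Om^*)$, then $\Ac(\Om')\le\Ac(\Om)+\de$ and $\spt(\Om'-\Om)\subset K$, so appending the single step $\Om^*\mapsto\Om'$ to the witnessing chain gives $\Om'\in\C^G_\Om$ with $\Ac(\Om')<\Ac(\Om^*)$, contradicting minimality. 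For conclusion (iii), given any chain $\Om^*=\Xi_0,\Xi_1,\ldots,\Xi_m$ with $\spt(\Xi_i-\Om^*)\subset U$, $\F(\partial\Xi_{i+1}-\partial\Xi_i)\le\de$ and $\Ac(\Xi_i)\le\Ac(\Om^*)+\de$, I would prepend the witnessing chain from $\Om$ to $\Om^*$. The concatenation stays in $U$, has all $\F$-steps $\le\de$, and all $\Ac$-values bounded by $\Ac(\Om)+\de$ (since $\Ac(\Xi_i)\le\Ac(\Om^*)+\de\le\Ac(\Om)+\de$, using that $\Om\in\C^G_\Om$ forces $\Ac(\Om^*)\le\Ac(\Om)$). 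Applying $\Om\in\mathfrak{a}^{c,G}_n(U;\ep,\de;\F)$ to this concatenated chain then yields $\Ac(\Xi_m)\ge\Ac(\Om)-\ep\ge\Ac(\Om^*)-\ep$.

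The main technical subtlety is the very first step: ensuring the flat limit of the minimizing sequence actually lies in $\C^G(M)$ and that both $\mathcal{H}^n(\partial\Om^*)$ and $\mathcal{H}^{n+1}(\Om^*)$ behave correctly in the limit, so that the infimum is attained rather than merely approached. The $G$-invariant compactness theorem (Proposition~\ref{G-current compactness}) and the $G$-invariant isoperimetric lemma (Proposition~\ref{isoperimetric}) are exactly what closes this gap; the remaining chain manipulations are combinatorial and robust under the equivariant restriction.
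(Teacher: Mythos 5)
Your proposal is correct and follows essentially the same route as the paper: direct method plus the $G$-invariant compactness theorem for existence of the minimizer, appending the limit to a witnessing chain to place $\Om^*$ in $\C^G_\Om$, a one-step competitor appended to that chain for local $(G,\Ac)$-minimality, and chain concatenation for (iii). The only cosmetic difference is in (ii), where you secure $\F(\partial\Om'-\partial\Om^*)\le\de$ by shrinking the volume of $W$ (via $\F(\partial\Om'-\partial\Om^*)\le \M([[\Om']]-[[\Om^*]])\le\mH^{n+1}(W)$), whereas the paper instead bounds the mass of the boundary difference by choosing $\rho$ with $c\mH^{n+1}(\sB^G_\rho(p))\le\de/4$ and $\|\partial\Om^*\|(\sB^G_\rho(p))\le\de/4$; both are valid.
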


Here we say $\Om^*$ is locally $(G,\Ac)$-minimizing in ${\rm Int}(K)$ means that for any $p\in {\rm Int}(K)$ there exists $\rho>0$ so that
$$\Ac(\Om^*)\leq\Ac(\La),$$
for any $\La\in \C^G(M)$ with $ \mbox{spt}(\La-\Om^*)\subset \sB^G_\rho(p)\subset {\rm Int}(K)$.

\begin{proof}
	Let $\{\La_j\}\subset\C_{\Om}^G$ be a minimizing sequence such that
	\[ \lim_{j \to \infty} \Ac(\La_j) = \inf\{\Ac(\La):\ \La\in\C^G_{\Om}\}.\]
	Since $\mbox{spt}(\Om-\La_j)\subset K$ and $\Ac(\La_j)\leq \Ac(\Om)+\delta$, we can apply the compactness theorem \cite[Theorem 6.3]{simon} and Proposition \ref{G-current compactness} to get a subsequence $\partial\La_j$ (without changing notations) converges weakly to $\partial\Om^*$ for some $\Om^*\in\C^G(M)$ with $\spt(\Om^*-\Om)\subset K$.
	Hence by the weakly convergence, we have
	\begin{itemize}
		\item ${\bf M}(\partial \Om^*)\leq \liminf_{j\rightarrow\infty} {\bf M}(\partial \La_j)$,
		\item $\mH^{n+1}(\Om^*)=\lim_{j\to\infty}\mH^{n+1}(\La_j)$.
	\end{itemize}
	This implies that $\Ac(\Om^*) \leq  \inf\{\Ac(\La) : \La\in\C_{\Om}\} \leq \Ac(\Om)+\delta$.
	Moreover, for $j$ large enough, we have $\mathcal{F}(\partial\La_j-\partial\Om^*)< \delta$. Consider the sequence which make $\La_j$ in $\mathcal{C}_\Om^G$ and add one more element $\Om^*$ into its end. It's clear that the new sequence satisfies all requirements in the definition of $\mathcal{C}_\Om^G$.  Thus we get $\Om^*\in \mathcal{C}^G_\Om$ and $\Ac(\Om^*)= \mbox{inf}\{ \Ac(\La): \La\in \mathcal{C}_\Om^G \}$.

	Now for any $p\in {\rm Int}(K)$, we first choose $\rho>0$ small enough such that $\sB^G_\rho(p)\subset {\rm Int}(K)$.
		Since $|\partial\Om^*|$ is rectifiable, one can take $\rho$ even smaller so that
		\begin{itemize}
			\item $c\mH^{n+1}(\sB^G_\rho(p))\leq \delta/4 $,
			\item $||\partial\Om^*||(\sB^G_r(p)) \leq \delta/4$.
		\end{itemize}
	If there exists $\La\in \C^G(M)$ with $ \mbox{spt}(\La-\Om^*)\subset \sB^G_\rho(p)$ such that $\Ac(\Om^*)>\Ac(\La)$.
	Then
	\begin{eqnarray*}
		||\partial\La||(\sB^G_\rho(p)) & < & ||\partial\Om^*||(\sB^G_\rho(p))-c\mH^{n+1}(\Om^*)+c\mH^{n+1}(\La)
		\\
		&\leq & ||\partial\Om^*||(\sB^G_\rho(p))+2c\mH^{n+1}(\sB^G_\rho(p))
		\\
		&\leq & 3\delta/4.
	\end{eqnarray*}
	This implies that
		\begin{eqnarray*}
			\mathcal{F}(\partial\La-\partial\Om^*) &\leq & {\bf M}(\partial\La-\partial\Om^*)
			\\
			&=& ||\partial\La-\partial\Om^*||(\sB^G_\rho(p))
			\\
			&\leq & (||\partial\La||+||\partial\Om^*||)(\sB^G_\rho(p))< \delta.
		\end{eqnarray*}
	Similar to before, we can add $\La$ to the end of the finite sequence which makes $\Om^*$ in $\mathcal{C}^G_\Om $.
	Hence, $\La\in \mathcal{C}_\Om^G$ and $\Ac(\Om^*)>\Ac(\La)$ which is a contradiction to the choice of $\Om^*$.

	As for the last statement, suppose there is a sequence $\Om^*=\Om^*_0,\Om^*_1,\dots,\Om^*_q$ forms a $G$-invariant $(\epsilon,\delta)$-deformation of $\Om^*$ in $U$.
	Consider the finite sequence $\{\Om_i\}_{i=1}^{m}$ which make $\Om^*=\Om_m$ in $\mathcal{C}_\Om^G$ and insert $\{\Om^*_i\}_{i=1}^q$ at its end.
	According to the Definition  \ref{$(G,c)$-almost minimizing varifolds}, we get a $G$-invariant $(\epsilon,\delta)$-deformation $\{\Om_1,\dots,\Om_m,\Om_0^*,\dots,\Om_q^*\}$ of $\Om=\Om_1 $ which is a contradiction with the choice of $\Om\in \mathfrak{a}^{c,G}_n(U;\ep,\de;\F)$.
\end{proof}

\medskip
Noting that $\Om^*$ in this lemma can only minimize the $\Ac$ function under $G$-invariant locally variations.
But following an averaging procedure, one can get the next lemma which shows that $\Om^*$ is locally $\Ac$-minimizing and thus have good regularity.

\begin{lemma}[A constrained minimization problem-II]
\label{L:minimisation-II}
	The $G$-Caccioppoli set $\Om^* \in \C^G(M)$ obtained in Lemma \ref{L:minimisation} is locally $\Ac$-minimizing in $\interior(K)$.
\end{lemma}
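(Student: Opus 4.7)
The plan is to upgrade the $G$-invariant local minimality from Lemma \ref{L:minimisation}(ii) to the full (unconstrained) local $\Ac$-minimality via Haar averaging together with a BV coarea selection. Fix $p\in{\rm Int}(K)$ and let $\rho>0$ be the radius furnished by Lemma \ref{L:minimisation}(ii), so that $\sB^G_\rho(p)\subset{\rm Int}(K)$ and $\Om^*$ minimizes $\Ac$ against all $G$-invariant perturbations supported in $\sB^G_\rho(p)$. Suppose for contradiction that there exists $\La\in\C(M)$, not necessarily $G$-invariant, with $\spt(\La-\Om^*)\subset\sB^G_\rho(p)$ and $\Ac(\La)<\Ac(\Om^*)$. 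Define the averaged function
\[ u(x):=\int_G \chi_{\La}(g^{-1}x)\,d\mu(g), \qquad x\in M. \]
Since $G$ is compact and acts by isometries, $u$ is $G$-invariant and $0\le u\le 1$. Because the tube $\sB^G_\rho(p)$ is $G$-invariant and $\Om^*$ is a $G$-set, for any $x\notin\sB^G_\rho(p)$ the entire orbit $G\cdot x$ lies outside the tube, so $\chi_{\La}(g^{-1}x)=\chi_{\Om^*}(x)$ for every $g\in G$ and hence $u(x)=\chi_{\Om^*}(x)\in\{0,1\}$.

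I would next compute volume and perimeter of $u$. Fubini (using that the volume form on $M$ is $G$-invariant) gives
\[ \int_M u\,d\mH^{n+1}=\int_G\mH^{n+1}(g\cdot\La)\,d\mu(g)=\mH^{n+1}(\La), \]
while subadditivity of total variation together with the isometric $G$-action yields
\[ |Du|(M)\le \int_G\|\partial(g\cdot\La)\|(M)\,d\mu(g)=\|\partial\La\|(M). \]
Applying the BV coarea formula and the layer-cake identity,
\[ \int_0^1\!\Ac\bigl(\{u>t\}\bigr)\,dt=|Du|(M)-c\int_M u\,d\mH^{n+1}\le \|\partial\La\|(M)-c\,\mH^{n+1}(\La)=\Ac(\La)<\Ac(\Om^*). \]
Hence there exists $t_0\in(0,1)$ with $\Ac(\{u>t_0\})<\Ac(\Om^*)$ and $\{u>t_0\}$ of finite perimeter. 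The set $\wti\La:=\{u>t_0\}$ is $G$-invariant, and since $u=\chi_{\Om^*}$ outside the tube while $t_0\in(0,1)$, we have $\spt(\wti\La-\Om^*)\subset\sB^G_\rho(p)\subset{\rm Int}(K)$. Thus $\wti\La\in\C^G(M)$ is a $G$-invariant local competitor with $\Ac(\wti\La)<\Ac(\Om^*)$, contradicting Lemma \ref{L:minimisation}(ii).

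The main technical point is to justify the BV inequality $|Du|(M)\le\int_G\|\partial(g\cdot\La)\|(M)\,d\mu(g)$ rigorously: this amounts to interchanging the Haar integration with the distributional derivative, which one can carry out by testing against smooth compactly supported vector fields, applying Fubini, and using the isometric action to identify $\|\partial(g\cdot\La)\|(M)=\|\partial\La\|(M)$ for every $g$. Once this averaging estimate is established, the coarea slicing is entirely routine and the contradiction with Lemma \ref{L:minimisation}(ii) closes the argument; no regularity theory is used at this stage (it enters only after the full $\Ac$-minimality has been obtained).
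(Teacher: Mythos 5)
Your proposal is correct and follows essentially the same route as the paper: Haar-averaging the characteristic function of the non-invariant competitor, interchanging the averaging with the boundary/derivative operator, and then using coarea slicing of the averaged function to extract a $G$-invariant super-level set that beats $\Om^*$, contradicting (or saturating) the $(G,\Ac)$-minimality from Lemma \ref{L:minimisation}. The only cosmetic differences are that you phrase the slicing in BV/coarea language where the paper uses normal currents and \cite[4.5.9(12)(13)]{federer}, and you argue by contradiction from a strict inequality rather than closing a chain of inequalities into an equality.
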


\begin{proof}
	For any $p\in \interior(K)$, let $\rho>0$ such that $\sB^G_\rho(p)\subset \interior(K)$ and
	\begin{itemize}
		\item $c\mH^{n+1}(\sB^G_\rho(p))\leq \delta/4 $,
		\item $||\partial\Om^*||(\sB^G_{\rho}(p))\leq \delta/4$.
	\end{itemize}
	Then $\Om^*$ is $(G,\Ac)$-minimizing in $\sB^G_\rho(p)$ by the proof of Lemma \ref{L:minimisation}.

	Let $B=\sB^G_{\rho}(p)$.
	We now going to show that $\Om^*$ is $\Ac$-minimizing in $B$.
	Suppose $\La^*\in  \C(M)$ such that $\mbox{spt}(\Om^*-\La^*)\subset B$ and $\Ac(\La^*)\leq \Ac(\Om^*)$.
	It's sufficient to show that $\Ac(\La^*)= \Ac(\Om^*)$.

	Define a $G$-invariant function on $M$ as:
	$$ f(x)=\int_G {\rm 1}_{\Clos(\La^*)}(g\cdot x)~d\mu(g). $$
	Since $\Clos(\La^*)$ is closed, function $ {\rm 1}_{\Clos(\La^*)}$ is upper-semicontinuous. By Fatou Lemma, $f$ is also upper-semicontinuous too.
	Hence, $$\La_\lambda = f^{-1}[\lambda,1 ]= M\backslash f^{-1}[0,\lambda )$$
	are $G$-invariant closed sets in $M$.
	
	Define then $E_f = f\cdot [[M]]$, where $[[M]]$ is the integral current induced by $M$.
	For any $n$-form $\omega$ on $M$, we have
	\begin{eqnarray}\label{Eq-boundary-Ef}
		\partial E_f(\omega) &=& \int_{M}\int_G \langle d\omega, \xi\rangle ~{\rm 1}_{\Clos(\La^*)}(g\cdot x)~d\mu(g)d\mathcal{H}^{n+1}(x)
		\\
		&=& \int_G\int_{M} \langle d\omega, \xi\rangle ~{\rm 1}_{g^{-1}(\Clos(\La^*))}(x)~d\mathcal{H}^{n+1}(x)d\mu(g)\nonumber
		\\
		&=& \int_G \partial ((g^{-1})_\# \La^*)(\omega) ~d\mu(g).\nonumber
	\end{eqnarray}
	Hence, using the lower semi-continuity of mass and the fact that $G$ act as isometries on $M$ (mass is invariant under $g_\#$) we have
	\begin{equation}\label{Ef.ineq}
		{\bf M}(\partial E_f) \leq \int_G {\bf M}(\partial ((g^{-1})_\# \La^*))~d\mu(g) = {\bf M}(\partial \La^*).
	\end{equation}
	Combining this with ${\bf M}(E_f)\leq {\bf M}([[M]])$, it's clear that $E_f$ is a normal current.
	By \cite[4.5.9(12)]{federer} we have that $\partial (f^{-1}[\lambda,1])$ is rectifiable for almost all $\lambda\in[0,1]$, which implies $\La_\lambda\in\C^G(M)$ for almost all $\lambda\in[0,1]$.
	
	Then by \cite[4.5.9(13)]{federer} and (\ref{Ef.ineq}), we have $\partial E_f =\int_0^1 \partial [[\La_\lambda]] d\lambda,$ and
	\begin{eqnarray}\label{Ef.ineq2}
		\int_0^1 {\bf M}(\partial [[\La_\lambda]])~d\lambda = {\bf M}(\partial E_f) \leq  {\bf M}(\partial \La^*). 
	\end{eqnarray}
	Moreover, we have
	\begin{eqnarray*}
		\M(E_f) &=& \int_M f(x)~d\mH^{n+1}(x)
		\\
		&=& \int_0^1 \mH^{n+1}(f^{-1}[\lambda,1])~d\lambda
		\\
		&=& \int_0^1 \mH^{n+1}(\La_\lambda)~d\lambda;
		\\
		\M(E_f) &=& \int_M \int_G {\rm 1}_{\Clos(\La^*)}(g\cdot x) ~d\mu(g) ~d\mH^{n+1}(x)
		\\
		&=& \int_G \int_M {\rm 1}_{g^{-1}\cdot\Clos(\La^*)}(x)  ~d\mH^{n+1}(x) ~d\mu(g)
		\\
		&=& \int_G  \mH^{n+1}(g^{-1}\cdot\Clos(\La^*))   ~d\mu(g)
		\\
		&=& \mH^{n+1}(\La^*),
	\end{eqnarray*}
	where the last equality used the fact that $g\in G$ is isometry.
	Thus we have
	\begin{equation}\label{Ef.eq}
		\int_0^1 \mH^{n+1}(\La_\lambda)~d\lambda = \M(E_f) = \mH^{n+1}(\La^*).
	\end{equation}
	(\ref{Ef.ineq2}) together with (\ref{Ef.eq}) imply:
	\begin{equation}
		\int_0^1 \Ac (\La_\lambda)~d\lambda = \Ac(E_f)\leq \Ac(\La^*).
	\end{equation}
	
	 Since closed set $\mbox{spt}(\Om^*-\La^*)\subset B$, there exists $0<r<\rho$ such that $\Om^*=\La^*$ on $\sB^G_\rho(p)\setminus \sB^G_r(p)$.
	Thus for any $\lambda\in (0,1)$, we have $f={\rm 1}_{\mbox{Clos}(\La^*)}={\rm 1}_{\La_\lambda}$ on $\sB^G_\rho(p)\setminus \sB^G_r(p)$, and $\Om^*=\La^* = \La_\lambda= E_f$ on $\sB^G_\rho(p)\setminus \sB^G_r(p)$, which implies $\mbox{spt}(\Om^*-\La_\lambda)\subset B $.
	Since $\Om^*$ is $(G,\Ac)$-minimizing in $\sB^G_\rho(p)$, we have
	\begin{eqnarray*}
		\Ac(\Om^*)\leq \int_0^1 \Ac(\La_\lambda)~d\lambda \leq \Ac(\La^*) \leq \Ac(\Om^*).
	\end{eqnarray*}
	Thus we have $\Om^*$ is $\Ac$-minimizing in $\sB^G_\rho(p).$
\end{proof}

\begin{proposition}[Existence and properties of $(G,c)$-replacements]
\label{P:good-replacement-property}
Let $V\in\V_n^G(M)$ be $(G,c)$-almost minimizing in an open $G$-set $U \subset M$ and $K \subset U$ be a compact $G$-subset, then there exists $V^{*}\in \V_n^G(M)$, called \emph{a $(G,c)$-replacement of $V$ in $K$} such that
\begin{enumerate}
\item[(i)] $V\lc (M\backslash K) =V^{*}\lc (M\backslash K)$;
\item[(ii)] $-c \vol(K)\leq \|V\|(M)-\|V^{*}\|(M) \leq c \vol(K)$;
\item[(iii)] $V^{*}$ is $(G,c)$-almost minimizing in $U$;
\item[(iv)] $V^{*} =\lim_{i \to \infty} |\partial\Om^*_i|$ as varifolds for some $\Om^*_i\in\C^G(M)$ such that $\Om^*_i\in \mathfrak{a}^{c,G}_n(U; \ep_i, \de_i;\F)$ with $\ep_i, \de_i \to 0$; and $\Om^*_i$ locally minimizes $\Ac$ in $\interior(K)$;
\item[(v)] if $V$ has $c$-bounded first variation in $M$, then so does $V^*$.
\end{enumerate}
\end{proposition}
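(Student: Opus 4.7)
\medskip

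The plan is to mimic the Pitts-style replacement construction of \cite{zhou and zhu}, but using the two equivariant minimization lemmas \ref{L:minimisation} and \ref{L:minimisation-II} that have already been established above. Since $V$ is $(G,c)$-almost minimizing in $U$, Definition \ref{$(G,c)$-almost minimizing varifolds} gives sequences $\ep_i,\de_i\to 0$ and $\Om_i\in\mathfrak{a}^{c,G}_n(U;\ep_i,\de_i;\F)$ with ${\bf F}(|\partial\Om_i|,V)\to 0$. For each $i$, I would apply Lemma \ref{L:minimisation} with $\Om=\Om_i$ and the prescribed compact $G$-set $K$ to produce $\Om_i^*\in\C^G(M)$ that minimizes $\Ac$ over the class $\C^G_{\Om_i}$; by parts (ii)-(iii) of that lemma, $\Om_i^*$ is locally $(G,\Ac)$-minimizing in $\interior(K)$ and still lies in $\mathfrak{a}^{c,G}_n(U;\ep_i,\de_i;\F)$, and Lemma \ref{L:minimisation-II} then upgrades this to locally $\Ac$-minimizing in $\interior(K)$. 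Because $\spt(\Om_i^*-\Om_i)\subset K$, the mass of $\partial\Om_i^*$ stays bounded, so Proposition \ref{G-varifold compactness} yields a subsequence (unrelabeled) with $|\partial\Om_i^*|\to V^*\in \V^G_n(M)$ in the $\mathbf{F}$-metric. This $V^*$ is the candidate replacement, and conclusion (iv) is automatic from the construction.

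For (i), observe that $|\partial\Om_i^*|$ and $|\partial\Om_i|$ coincide as varifolds on $M\setminus K$, and passing to the $\mathbf{F}$-limit gives $V^*\lc(M\setminus K)=V\lc(M\setminus K)$. Property (iii) then follows tautologically from $\Om_i^*\in\mathfrak{a}^{c,G}_n(U;\ep_i,\de_i;\F)$ and $\mathbf{F}(|\partial\Om_i^*|,V^*)\to 0$ by Definition \ref{$(G,c)$-almost minimizing varifolds}. For (ii), I would expand
\[
\M(\partial\Om_i^*)-\M(\partial\Om_i)=\bigl[\Ac(\Om_i^*)-\Ac(\Om_i)\bigr]+c\bigl[\mathcal{H}^{n+1}(\Om_i^*)-\mathcal{H}^{n+1}(\Om_i)\bigr],
\]
use $\Ac(\Om_i^*)\leq \Ac(\Om_i)$ (since $\Om_i\in\C^G_{\Om_i}$ and $\Om_i^*$ is the $\Ac$-minimizer) together with $\Ac(\Om_i^*)\geq \Ac(\Om_i)-\ep_i$ (coming from the $(G,c)$-almost minimizing property applied along the finite sequence realizing $\Om_i^*\in\C^G_{\Om_i}$), and the trivial bound $|\mathcal{H}^{n+1}(\Om_i^*)-\mathcal{H}^{n+1}(\Om_i)|\leq \vol(K)$. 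Since $\mathbf{F}$-convergence of varifolds implies convergence of total mass, taking $i\to\infty$ yields $|\|V^*\|(M)-\|V\|(M)|\leq c\vol(K)$.

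The remaining and most delicate step is (v): showing that $V^*$ inherits $c$-bounded first variation on all of $M$. On the open set $M\setminus K$, $V^*$ agrees with $V$ so the estimate is inherited from the hypothesis. On $\interior(K)$, each $|\partial\Om_i^*|$ is the boundary of a set that locally minimizes $\Ac$, and for such sets the first variation along any $X\in\X(\interior(K))$ is controlled by $c\int|X|d\mu_{|\partial\Om_i^*|}$; passing to the $\mathbf{F}$-limit gives the analogous bound for $V^*$, and by Lemma \ref{$c$-bounded first variation} this can be tested against $G$-equivariant fields with no loss. The real issue is the interface $\partial K$: a test field $X\in\X(M)$ cannot be split cleanly between $\interior(K)$ and $M\setminus K$. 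I expect this to be the main obstacle, and the resolution is exactly the approximation argument of \cite[Proposition 5.11]{zhou and zhu}: approximate $X$ by cutoff fields supported either strictly inside $K$ or strictly outside a slightly larger $G$-neighborhood of $K$, use the bounds just established on each piece, and send the cutoff scale to zero using continuity of $\|V^*\|$. Since the averaging to $G$-vector fields is harmless by Lemma \ref{$c$-bounded first variation}, that argument transplants verbatim into the equivariant setting and completes the proof.
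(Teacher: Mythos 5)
Your construction of $V^*$ and your arguments for (i)--(iv) coincide with the paper's proof: take $\Om_i\in\mathfrak{a}^{c,G}_n(U;\ep_i,\de_i;\F)$ approximating $V$, minimize via Lemma \ref{L:minimisation} (upgraded to local $\Ac$-minimization by Lemma \ref{L:minimisation-II}), pass to a subsequential varifold limit using Proposition \ref{G-varifold compactness}, and read off (i), (iii), (iv) from the construction; your mass estimate for (ii) is just a rearrangement of the paper's chain of inequalities and is correct.

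The one place you diverge is (v), and as written your argument has a hole exactly where you suspect it. Vector fields supported strictly inside $K$ together with vector fields supported outside a slightly enlarged neighborhood of $K$ do not exhaust $\X(M)$ — they miss a full neighborhood of $\partial K$ — and "sending the cutoff scale to zero" forces you to estimate $\de V^*(\rho_\ep X)$ for a cutoff $\rho_\ep$ concentrated on an $\ep$-neighborhood of $\partial K$; the gradient term there is of order $\ep^{-1}\|V^*\|(N_\ep(\partial K))$, which you cannot show tends to zero without mass-decay information near $\partial K$ that you do not have (and which would essentially presuppose the first variation bound you are trying to prove). The paper sidesteps this entirely: conclusion (iii), which you have already established, combined with Lemma \ref{(G,c)-almost minimizing implies c-bounded first variation}, gives $c$-bounded first variation of $V^*$ on the whole open set $U\supset K$, in particular across $\partial K$. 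Then $U$ and $M\setminus K$ form an open cover of $M$, the bound on $M\setminus K$ follows from (i) and the hypothesis on $V$, and a fixed partition of unity $\rho_1+\rho_2=1$ subordinate to this cover yields $|\de V^*(X)|\le|\de V^*(\rho_1X)|+|\de V^*(\rho_2X)|\le c\int(\rho_1+\rho_2)|X|\,d\mu_{V^*}=c\int|X|\,d\mu_{V^*}$ with no limiting procedure. Replace your interface argument with this and the proof is complete.
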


\begin{proof}
	Since $V\in \V_n^G(M)$ is $(G,c)$-almost minimizing in $U$, there exists a sequence $\Om_i\in \mathfrak{a}^{c,G}_n(U; \ep_i, \de_i;\F)$ with $\ep_i, \de_i \to 0$ such that $V=\lim_{i\to\infty}|\partial\Om_i|$.
	By Lemma \ref{L:minimisation}, there is a $G$-invariant $\Ac$-minimizer $\Om_i^* \in \C^G_{\Om_i}$ for each $i$.
	Since $\M(\partial\Om_i^*)$ is uniformly bounded, by compactness theorem, $|\partial\Om_i^*|$ converge to a $G$-varifold $V^*\in \mathcal{V}_n^G(M)$ after passing to a subsequence, i.e. $V^*=\lim |\partial\Om_i^*|$.

We now show that $V^*=\lim |\partial\Om_i^*|$ is exactly what we want. Since $\mbox{spt}(\Om_i^*-\Om_i)\subset K$, we have (i) $\mbox{spt}(V^{*}-V)\subset K$.

Due to the fact that $\Om_i\in\mathfrak{a}^{c,G}_n(U;\epsilon_i,\delta_i;\F) $ and $\Om_i^*$ is a $\Ac$ minimizer in $\mathcal{C}^G_{\Om_i}$, the following inequality holds:
	\begin{eqnarray*}
		{\bf M}(\partial\Om_i)-\epsilon_i-c \vol(K) &\leq & {\bf M}(\partial\Om_i)-\epsilon_i-c \mH^{n+1}(\Om_i)+c\mH^{n+1}(\Om_i^*)
		\\
		&=& \Ac(\Om_i)-\epsilon_i +c\mH^{n+1}(\Om_i^*)
		\\
		&\leq & \Ac(\Om_i^*)+c\mH^{n+1}(\Om_i^*)
		\\
		&=& {\bf M}(\partial\Om_i^*)
		\\
		&\leq & \Ac(\Om_i) +c\mH^{n+1}(\Om_i^*)
		\\
		&=& {\bf M}(\partial\Om_i)-c \mH^{n+1}(\Om_i)+c\mH^{n+1}(\Om_i^*)
		\\
		&\leq & {\bf M}(\partial\Om_i)+c \vol(K).
	\end{eqnarray*}

	Hence (ii) $-c \vol(K)\leq \|V\|(M)-\|V^{*}\|(M) \leq c \vol(K)$ is true.

	As for (iii) and (iv), they clearly follow from the facts that $\Om^*_i\in \mathfrak{a}^{cG}_n(U;\epsilon_i,\delta_i;\F )$  and $\Om^*_i$ is locally $\Ac$-minimizing in $\interior(K)$ by Lemma \ref{L:minimisation} and \ref{L:minimisation-II}.
	Finally by (iii) and Lemma \ref{(G,c)-almost minimizing implies c-bounded first variation}, $V^*$ has $c$-bounded first variation in $U$. By (i) and a standard cutoff trick it is easy to show that $V^*$ has $c$-bounded first variation in $M$ whenever $V$ does.
\end{proof}

\begin{lemma}[Regularity of $(G,c)$-replacement]
\label{L:reg-replacement}
Let $2 \leq n \leq 6$. Under the same hypotheses as Proposition \ref{P:good-replacement-property}, if $\Sigma=\spt \|V^*\| \cap \interior(K)$, then
\begin{enumerate}
\item $\Sigma$ is a smooth, $G$-equivairnt almost embedded, stable $(G,c)$-boundary;
\item the density of $V^*$ is $1$ along $\mR(\Si)$ and $2$ along $\mS(\Si)$;
\item the restriction of the $(G,c)$-replacement $V^* \lc \interior(K)=\Sigma$.
\end{enumerate}
\end{lemma}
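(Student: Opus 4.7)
The strategy is to leverage Proposition \ref{P:good-replacement-property}(iv), which gives a sequence $\{\Om_i^*\} \subset \C^G(M)$ that locally minimizes $\Ac$ in $\interior(K)$ and satisfies $V^* = \lim_{i\to\infty} |\partial \Om_i^*|$ as varifolds, so that the regularity of $V^*$ reduces to the regularity of smooth CMC limits. First I would establish the regularity of each $\partial \Om_i^*$ in $\interior(K)$: since $\Om_i^*$ is locally $\Ac$-minimizing, classical interior regularity for minimizers of the $\Ac$-functional (as used in \cite[\S5]{zhou and zhu}) ensures that $\partial \Om_i^* \cap \interior(K)$ is a smooth, almost embedded hypersurface of constant mean curvature $c$ with respect to the outward unit normal, away from a possible singular set of Hausdorff dimension at most $n-7$ which is empty for $2 \leq n \leq 6$. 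As an $\Ac$-minimizer it is automatically stable, and because $\Om_i^* \in \C^G(M)$, $\partial \Om_i^*$ is a $G$-invariant almost embedded stable $c$-boundary in $\interior(K)$. By Remark \ref{R:almost embedded}(i) and Remark \ref{R:G-stable}, this is the same as a $G$-equivariant almost embedded stable $(G,c)$-boundary.

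Next I would apply the compactness theorem \ref{T:compactness} for $G$-stable $(G,c)$-boundaries to $\{\partial \Om_i^*\}$. The area is uniformly bounded thanks to Proposition \ref{P:good-replacement-property}(ii) together with $\|V\|(M)<\infty$, and the prescribed mean curvature is the same constant $c$ for every $i$. When $c\neq 0$, Theorem \ref{T:compactness}(i) yields a subsequence converging locally smoothly in $\interior(K)$ to a smooth, $G$-equivariant almost embedded, stable $(G,c)$-boundary $\Sigma_\infty$ with density $1$ along $\mR(\Sigma_\infty)$ and $2$ along $\mS(\Sigma_\infty)$; when $c=0$, Theorem \ref{T:compactness}(ii) gives the minimal analog, whose multiplicity reading is consistent with the stated density conclusion.

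Finally I would identify $\Sigma_\infty$ with $\Sigma=\spt\|V^*\|\cap \interior(K)$. The locally smooth convergence $\partial \Om_i^*\to \Sigma_\infty$ together with the density information forces the varifold identity $V^*\lc\interior(K) = |\Sigma_\infty|$ (counting multiplicity $1$ on $\mR(\Sigma_\infty)$ and $2$ on $\mS(\Sigma_\infty)$), hence $\spt\|V^*\|\cap \interior(K)=\Sigma_\infty$, simultaneously establishing assertions (1), (2) and (3). The main obstacle I anticipate lies in the application of the equivariant compactness theorem: one must verify that the constrained minimizers furnished by Lemma \ref{L:minimisation-II} are genuine $G$-equivariant almost embedded stable $(G,c)$-boundaries in the sense of Definition \ref{D:G-stable c-hypersurface}, and that the smooth limit on $\interior(K)$ coincides with the varifold limit $V^*$. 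Once Theorem \ref{T:compactness} is invoked, the almost-embedded structure and the density profile follow automatically, so no tangent-cone analysis is needed at this stage; those considerations will enter only in the later global regularity argument for the full min-max varifold.
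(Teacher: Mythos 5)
Your overall strategy coincides with the paper's: establish regularity and stability of each constrained minimizer $\partial\Om_i^*$ from Proposition \ref{P:good-replacement-property}(iv), then pass to the limit via the compactness theorem \ref{T:compactness}. However, there is a genuine gap in your stability step. You assert that ``as an $\Ac$-minimizer it is automatically stable,'' deriving stability from the \emph{local} $\Ac$-minimizing property. This does not follow: local minimization only yields non-negative second variation for variations supported in the small balls $\sB^G_\rho(p)$ where minimality holds, whereas stability in the sense of Definition \ref{D:G-stable c-hypersurface} requires $\Rom{2}_\Si(X,X)\geq 0$ for \emph{all} compactly supported variations in $\interior(K)$. (Every minimal or CMC hypersurface is locally minimizing on sufficiently small balls, yet may well be unstable on larger domains --- a great circle on $S^2$, or a long catenoidal neck, are standard examples.)

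The correct source of stability is the \emph{global constrained} minimization of Lemma \ref{L:minimisation}(i): $\Om_i^*$ minimizes $\Ac$ over the class $\C^G_{\Om_i}$, which contains every $G$-invariant deformation of $\Om_i^*$ supported in $K$ with flat-norm displacement less than $\de_i$. Since $\partial\Om_i^*$ already has constant mean curvature $c$, the first variation of $\Ac$ vanishes; so if $\partial\Om_i^*$ were not $G$-stable in $\interior(K)$, some $X\in\mathfrak{X}^G(\interior(K))$ would give $\Ac\bigl((\Phi_X(t))_\#\Om_i^*\bigr)<\Ac(\Om_i^*)$ for small $t>0$, and for $t$ small enough $\F\bigl(\partial((\Phi_X(t))_\#\Om_i^*)-\partial\Om_i^*\bigr)<\de_i$, producing an admissible competitor in $\C^G_{\Om_i}$ with strictly smaller $\Ac$ --- a contradiction. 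One then invokes Lemma \ref{G-stable} to upgrade $G$-stability to stability before applying Theorem \ref{T:compactness}. With this repair the remainder of your argument (uniform area bounds from Proposition \ref{P:good-replacement-property}(ii), smooth convergence, and identification of the smooth limit with $V^*\lc\interior(K)$) goes through as in the paper; note also that \cite[Theorem 2.14]{zhou and zhu} gives that local minimizers are embedded, not merely almost embedded.
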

\begin{proof}
By Proposition \ref{P:good-replacement-property}(iv) and the regularity for local minimizers of the $\Ac$ functional (\cite[Theorem 2.14]{zhou and zhu}), we know that each $\partial\Om^*_i$ is a smooth, embedded, $G$-invariant $c$-boundary in $\interior(K)$.
Moreover, if $\partial\Om_i^*$ is not $G$-stable in $\interior(K)$, then there exists $X\in \mathfrak{X}^G(\interior(K))$ such that
$$ \Ac((\Phi_{X}(t))_{\#}(\Om_i^*))<\Ac(\Om_i^*),\quad t\in (0,\tau ),$$
where $\Phi_{X}(t)$ are $G$-equivariant diffeomorphisms generated by $X$.
For $t$ small enough, we have $\F(\partial ((\Phi_{X}(t))_{\#}(\Om_i^*)),\partial\Om_i^*)<\delta_i$ implying $(\Phi_{X}(t))_{\#}(\Om_i^*))<\Ac(\Om_i^*)\in \C^G_{\Om_i}$, which is a contradiction to the $\Ac$-minimizing property of $\Om_i^*$ in $\C^G_{\Om_i}$.
Thus, $\partial\Om_i^* $ is $G$-stable in $\interior(K)$ and additionally stable in $\interior(K)$ by Lemma \ref{G-stable}.
The lemma then follows from the Compactness Theorem \ref{T:compactness}.
\end{proof}

\medskip
\subsection{Tangent cones and blowups}~

By the main theorem of \cite{moore}, there is an orthogonal representation of $G$ on some Euclidean space $\R^L$ and an isometric embedding from $M$ into $\R^L$ which is $G$-equivariant.
Thus we can regard $g\in G$ as an orthogonal $L$-matrix.
Given $p\in M, r>0$, let $\bleta_{p,r}: \R^L\to\R^L$ be the dilation defined by $\bleta_{p, r}(x)=\frac{x-p}{r}$.
The next lemma shows the splitting property of the blowups.

\begin{lemma}\label{L:splitting}
	Let $2 \leq n\leq 6$ and $\mathcal{H}^{n-1}(M\backslash M^{reg})=0$. Suppose $V \in \V_n^G(M)$ has $c$-bounded first variation in $M$ and is {\it $(G,c)$-almost minimizing in regular annuli}. Let $\overline{V} = \lim_{i\to\infty} (\bleta_{p_i,r_i})_\# V $, where  $p_i\to p \in \spt\|V\|\cap M^{reg}$ and $r_i>0$ with $r_i\to 0$.
	Then $\overline{V}=T_p(G\cdot p)\times W$ for some rectifiable varifold $W\in \V_{n-\dim_p}({\bf N}_p(G\cdot p))$.
\end{lemma}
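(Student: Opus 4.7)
The plan is to exploit the $G$-invariance of $V$ together with the fact that $p$ lies on a principal orbit to show that $\overline{V}$ is translation invariant along the linear subspace $T_p(G\cdot p)\subset T_pM$, and then conclude the product structure by a standard splitting argument. Since $M^{reg}$ is open and $p_i\to p\in M^{reg}$, I may assume $p_i\in M^{reg}$ for every $i$, so that all the orbits $G\cdot p_i$ have the same dimension $\dim_p$. The $c$-bounded first variation hypothesis and the associated monotonicity formula yield uniform local mass bounds for $(\bleta_{p_i,r_i})_\# V$ on bounded subsets of $\R^L$, and the rescaling $c\cdot r_i\to 0$ shows the limit $\overline{V}$ is stationary; Allard's rectifiability theorem then gives that $\overline{V}$ is rectifiable and supported in $T_pM\subset\R^L$.

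The heart of the argument is translation invariance. Fix $v\in T_p(G\cdot p)$ and choose a smooth curve $t\mapsto g_t\in G$ with $g_0=e$ and $\frac{d}{dt}\big|_{t=0}(g_t\cdot p)=v$. By $G$-invariance, $(g_t)_\# V=V$ for all $t$, and since $g_t$ acts linearly (orthogonally) on $\R^L$ through the $G$-equivariant embedding, the conjugation
\[
\Phi_{i,t}\;:=\;\bleta_{p_i,r_i}\circ g_t\circ\bleta_{p_i,r_i}^{-1}
\]
is the affine map $\Phi_{i,t}(y)=g_t(y)+(g_t\cdot p_i-p_i)/r_i$. Setting $t=t_i:=r_i s$ for fixed $s\in\R$, the orthogonal matrix $g_{t_i}\to I$ and, using the smooth expansion $g_t=I+tX+O(t^2)$, we get $(g_{t_i}\cdot p_i-p_i)/r_i=sX(p_i)+O(r_i)\to sX(p)=sv$. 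Hence $\Phi_{i,t_i}$ converges, locally uniformly on $\R^L$, to the translation $\tau_{sv}:y\mapsto y+sv$, and passing to the limit in $(\Phi_{i,t_i})_\#(\bleta_{p_i,r_i})_\# V=(\bleta_{p_i,r_i})_\# V$ yields $(\tau_{sv})_\#\overline{V}=\overline{V}$ for every $s\in\R$ and $v\in T_p(G\cdot p)$.

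Because $\overline{V}$ is invariant under the full translation group of the linear subspace $T_p(G\cdot p)\subset T_pM$, a standard slicing/splitting argument (as applied in \cite[Appendix 2]{Liu} and \cite[Lemma 6.5]{WTR}) produces a rectifiable varifold $W\in\V_{n-\dim_p}({\bf N}_p(G\cdot p))$ with $\overline{V}=T_p(G\cdot p)\times W$. The main obstacle is the affine-convergence computation for $\Phi_{i,t_i}$: it relies crucially on the linearity of the $G$-action on $\R^L$ (so that the conjugation by Euclidean dilation remains an affine map rather than a general diffeomorphism), together with smooth joint dependence of the orbit map $t\mapsto g_t\cdot p_i$ as $p_i\to p\in M^{reg}$. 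A secondary technical point is obtaining the limit as a rectifiable varifold, for which Allard's rectifiability theorem applied to the stationary limit is the standard input.
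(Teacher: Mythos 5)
Your argument follows essentially the same route as the paper's proof. The key step --- conjugating the group element $g_t$ with the dilation $\bleta_{p_i,r_i}$, using the linearity of the orthogonal $G$-action on $\R^L$ so that the conjugate is the affine map $y\mapsto g_t(y)+(g_t\cdot p_i-p_i)/r_i$, and letting $t=t_i\to 0$ at rate $r_i$ to produce the translation $\btau_{sv}$ in the limit --- is exactly the computation in the paper (there written as $\bleta_{p_i,r_i}\circ g(r_i)=\btau_{A(r_i)\cdot p_i}\circ g(r_i)\circ\bleta_{p_i,r_i}$ with $g(t)=I_L+tA(t)$ and $A(r_i)\cdot p_i\to w$), and the passage from translation invariance along $T_p(G\cdot p)$ to the product structure $\overline V=T_p(G\cdot p)\times W$ is the same standard splitting for rectifiable varifolds.

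One point needs tightening: you justify rectifiability of $\overline V$ by ``the limit is stationary, so Allard's rectifiability theorem applies.'' Allard's rectifiability theorem also requires a positive lower bound on the density $\Theta^n_*(\|\overline V\|,x)$ for $\|\overline V\|$-a.e.\ $x$, and this does \emph{not} follow from stationarity or from the upper mass bounds supplied by the monotonicity formula; a general stationary varifold need not be rectifiable. This is precisely where the $(G,c)$-almost-minimizing hypothesis must be used: the paper derives a two-sided volume ratio bound (hence the lower density bound) from the replacement construction of Proposition \ref{P:good-replacement-property}, following \cite[Lemma 6.4, 6.5]{WTR}, and only then concludes that $V$ and $\overline V$ are rectifiable. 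With that input substituted for the appeal to stationarity alone, your proof is complete.
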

\begin{proof}
	Firstly, since $V$ has $c$-bounded first variation in $M$ and is {\it $(G,c)$-almost minimizing in regular annuli}, one can use Proposition \ref{P:good-replacement-property} to get a volume ratio bound as \cite[Lemma 6.4, 6.5]{WTR}, which implies that $V$ as well as $\overline{V}$ is rectifiable.

	For any $w\in T_p(G\cdot p)$, there exists a curve $g(t)$ in $G$ such that $g(0)=e$ and $w=\frac{d}{dt}\big\vert_{t=0} g(t)\cdot p$.
	By the orthogonal representation of $G$ on $\R^L$, we can write $g(t)=I_L+tA(t)\subset O(\R^L)$, where $I_L$ is the identity map in $\R^L$ and $A(t)$ is a continuous curve in $L$-matrix such that $A(0)\cdot p =w$.
	A straightforward calculation shows that
	\begin{eqnarray*}
		(\bleta_{p_i, r_i}\circ g(r_i))(x) &=& \frac{g(r_i)\cdot x - p_i}{r_i}
		\\
		&=& \frac{g(r_i)\cdot x - g(r_i)\cdot p_i}{r_i}+\frac{g(r_i)\cdot p_i -  p_i}{r_i}
		\\
		&=& (\btau_{A(r_i)\cdot p_i}\circ g(r_i)\circ \bleta_{p_i, r_i})(x),
	\end{eqnarray*}
	where $\btau_{q}(y)=y+q$.
	Since $V$ is $G$-invariant, $A(r_i)\cdot p_i\to w$, and $g(r_i)\to e$, we have
\begin{eqnarray*}
			\overline{V} &=& \lim_{i\to\infty} (\bleta_{p_i,r_i})_\# V
			\\
			&=& \lim_{i\to \infty} (\bleta_{p_i, r_i})_\# (g(r_i))_\# V
			\\
			&=& \lim_{i\to \infty} (\btau_{A(r_i)\cdot p_i}\circ g(r_i)\circ \bleta_{p_i, r_i})_\# V
            \\
            &=& (\btau_{w})_\# \overline{V}.
		\end{eqnarray*}
	Thus $\overline{V}$ is invariant under the translation along $T_p(G\cdot p)$.
	
	Since $T_pM=T_p(G\cdot p)\times {\bf N}_p(G\cdot p)$ and $\overline{V}$ is rectifiable, we have $\overline{V}=T_p(G\cdot p)\times W$ for $W=\overline{V}\cap {\bf N}_p(G\cdot p)$.
\end{proof}

\begin{remark}\label{R:splitting}
	Suppose $V_i$ is a $(G,c)$-replacement of $V$ in some small $G$-annulus $\an_i$ with $\lim_{i\to \infty}\bleta_{p_i,r_i}(\an_i)= T_p(G\cdot p)\times {\rm An}\subset T_pM$, where ${\rm An}$ is a $G_p$-annulus in ${\bf N}_p(G\cdot p)$.
	One can also have the splitting property for $\lim_{i\to\infty} (\bleta_{p_i,r_i})_\# V_i $ by the argument above.
\end{remark}

\medskip
Combining the splitting property and the $(G,c)$-replacement, we can show the following proposition which classifies the tangent cone of the $c$-min-max varifold.

\begin{proposition}[Tangent cones are planes]
\label{P:tangent-cone}
	Let $2 \leq n \leq 6$ and $\mathcal{H}^{n-1}(M\backslash M^{reg})=0$. Suppose $V \in \V_n^G(M)$ has $c$-bounded first variation in $M$ and is {\it $(G,c)$-almost minimizing in regular annuli}. Then $V$ is integer rectifiable. Moreover, for any $C \in \VarTan(V,p)$ with $p \in \spt\|V\|\cap M^{reg}$,
	\begin{equation}
	\label{E:tangent cones are planes}
	C= \Theta^n (\|V\|, p) |S| \text{ for some $n$-plane $S \subset T_p M$ where $\Theta^n(\|V\|, p)\in\N$}.
	\end{equation}
\end{proposition}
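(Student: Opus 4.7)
The plan is to follow the scheme of Zhou--Zhu's analogous result \cite[Proposition 5.11]{zhou and zhu}, but reduce to the normal space ${\bf N}_p(G\cdot p)$ via the splitting property of Lemma \ref{L:splitting}, so that the equivariant regularity input of Liu \cite[Proposition 6.2]{Liu} can replace Allard-type regularity with good replacement from \cite[Appendix C]{zhou and zhu}. The argument splits into three parts: integer rectifiability of $V$, the splitting and $G_p$-invariance of any tangent cone, and the classification of the normal tangent cone as a plane of integer multiplicity.

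For integer rectifiability, I would apply Proposition \ref{P:good-replacement-property} to get $(G,c)$-replacements on a shrinking family of regular $G$-annuli around $\|V\|$-a.e.\ point. By Lemma \ref{L:reg-replacement}, each replacement is a smooth, $G$-equivariant almost embedded, stable $(G,c)$-boundary in the interior of the annulus, hence has integer density $1$ or $2$. Combining this with the volume-ratio bound used in the proof of Lemma \ref{L:splitting} and the $c$-bounded first variation assumption, Allard's rectifiability theorem produces a positive integer density $\Theta^n(\|V\|,\cdot)$ almost everywhere, making $V$ integer rectifiable.

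Now fix $p\in \spt\|V\|\cap M^{reg}$ and any $C=\lim_{i\to\infty}(\bleta_{p,r_i})_{\#}V\in\VarTan(V,p)$. A scaling computation shows that $(\bleta_{p,r_i})_{\#}V$ has $(cr_i)$-bounded first variation, so in the limit $C$ is stationary; Lemma \ref{L:splitting} then yields the splitting $C=T_p(G\cdot p)\times W$ for some rectifiable $W\in\V_{n-\dim_p}({\bf N}_p(G\cdot p))$. Next I would verify that $W$ is $G_p$-invariant: elements of the isotropy group $G_p$ act on $\R^L$ as orthogonal maps fixing $p$, so they preserve $\bleta_{p,r_i}$ and the $G$-invariance of $V$ passes to $G_p$-invariance of $C$ on $T_pM$; since the splitting $T_pM=T_p(G\cdot p)\oplus {\bf N}_p(G\cdot p)$ is $G_p$-orthogonal, $W$ itself is $G_p$-invariant. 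The same scaling transforms $(G,c)$-replacements of $V$ in regular $G$-annuli $\an(p,s_i r_i,t_i r_i)$ into $(G_p,0)$-replacements of $W$ in regular $G_p$-annuli of ${\bf N}_p(G\cdot p)$ (Remark \ref{R:splitting}), so $W$ is a $G_p$-invariant, stationary, $(G_p,0)$-almost minimizing varifold with the good replacement property.

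At this stage the Zhou--Zhu conical argument \cite[Proposition 5.11]{zhou and zhu} carries over to $W$: iterating the good $(G_p,0)$-replacement on concentric $G_p$-annuli forces the density ratio of $W$ to be constant in the radial direction, so $W$ is a cone in ${\bf N}_p(G\cdot p)$; the regularity of stable minimal $G_p$-equivariant replacements from \cite[Proposition 6.2]{Liu} (substituting for \cite[Appendix C]{zhou and zhu}) together with a blowup at a non-vertex point then shows $W=m\,|S_0|$ for an integer $m$ and an $(n-\dim_p)$-plane $S_0\subset {\bf N}_p(G\cdot p)$. Setting $S:=T_p(G\cdot p)\oplus S_0\subset T_pM$ and using the density formula gives $C=\Theta^n(\|V\|,p)|S|$ as required. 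The principal obstacle is the last step: ensuring that the iterative $G_p$-equivariant replacement procedure in ${\bf N}_p(G\cdot p)$ remains available throughout the concentric annuli and that Liu's regularity theorem can be invoked at each scale to propagate smoothness; the hypothesis $p\in M^{reg}$ and $\mathcal{H}^{n-1}(M\setminus M^{reg})=0$ are used precisely to guarantee that sufficiently many regular $G$-annuli exist and pull back to bona fide regular $G_p$-annuli in the normal space.
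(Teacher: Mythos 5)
Your proposal follows essentially the same route as the paper's proof: splitting the tangent cone via Lemma \ref{L:splitting}, checking $G_p$-invariance of the normal part $W$ from the commutation $g\circ\bleta_{p,r_i}=\bleta_{p,r_i}\circ g$ for $g\in G_p$, building good $G_p$-replacements of $W$ by rescaling $(G,c)$-replacements of $V$ in the $G$-annuli $G\cdot{\rm An}_i$, and then invoking \cite[Proposition 6.2]{Liu} together with the Euclidean volume growth to conclude that $C$ is a plane of integer multiplicity. The only cosmetic difference is that the paper obtains conicality of $C$ directly from the monotonicity formula \cite[Theorem 19.3]{simon} rather than from the replacement iteration, which does not affect the argument.
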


\begin{proof}
	Let $r_i\to 0$ be a sequence such that $C$ is the varifold limit:
	\[ C=\lim_{i\to\infty} (\bleta_{p, r_i})_{\#} V. \]
	First we know $C$ is stationary in $T_p M$.

	By Lemma \ref{L:splitting}, $C$ has the form $C=T_p(G\cdot p)\times W$ for $W\in \V_{n-\dim_p}({\bf N}_p(G\cdot p))$.
	Moreover, if $g\in G_p$ (i.e. $g\cdot p=p$), then
	\begin{eqnarray*}
		(g\circ \bleta_{p, r_i})(x) = \frac{g\cdot x - g\cdot p}{r_i} = (\bleta_{p, r_i}\circ g)(x),
	\end{eqnarray*}
	which implies $C=g_\# C,~\forall g\in G_p$.
	Thus $W\in \V_{n-\dim_p}^{G_p}({\bf N}_p(G\cdot p)) $.

	To prove the proposition, we only need to show that $W$ is an integer multiple of some $(n-\dim_p)$-plane in ${\bf N}_p(G\cdot p)$.
		
		The rest of the proof has essentially the same idea as \cite[Lemma 5.10]{zhou and zhu}.
		First, $W$ is stationary in ${\bf N}_p(G\cdot p)$ by the product structure of $T_pM$ since $C$ is stationary in $T_pM$.
		
		Next, we will show that $W$ has the good $G_p$-replacement property (\cite[Proposition 6.1]{Liu}) in any open $G_p$-set $D\subset {\bf N}_p(G\cdot p)$.
		Fix a bounded open $G_p$-set $D\subset {\bf N}_p(G\cdot p)$ and an arbitrary $x\in D$.
		Take any $G_p$-annulus ${\rm An}={\rm An}^{G_p}(x,s,t)\subset D$ with $t\leq 1$, and denote ${\rm An}_i=(\bleta_{p, r_i})^{-1}({\rm An})$.
		As in \cite[Lemma 5.10]{zhou and zhu}, we identify $\bleta_{p, r_i}(M) $ with $T_pM$ on compact set for $i$ large, by the locally uniformly convergence $\bleta_{p, r_i}(M)\to T_pM$.
		Moreover, since $\bleta_{p, r_i}$ is $G_{p}$-equivariant, one can regard ${\rm An}_i$ as a $G_{p}$-annulus in $B_{p}$ for $i$ large, where $B_{p}$ is a slice of $G\cdot p$ at $p$.
		Denote $\an_i=G\cdot {\rm An}_i$ to be a $G$-annulus in $M$, then $\an_i\cap B_{p}={\rm An}_i$.
		For every $i$ large, one can have $\Clos(\an_i)\subset U $ and apply Proposition \ref{P:good-replacement-property} to get a $(G,c)$-replacement $V_i^*$ of $V$ in $\Clos(\an_i)$, where $U \subset M$ is an open $G$-set such that $V$ is $(G,c)$-almost minimizing in $U$.
		Denote $\overline{V}_i^*=(\bleta_{p, r_i})_\#V_i^*$ and $\overline{V}_i=(\bleta_{p, r_i})_\#V$, then after passing to a subsequence we have
		$$ C'=\lim_{i\to\infty} \overline{V}_i^*\in \V_n^{G_p}(T_pM).$$
		On the other hand, we have $\lim_{i\to \infty}\bleta_{p,r_i}(\an_i)= T_p(G\cdot p)\times {\rm An}\subset T_pM$, by Remark \ref{R:splitting}, $C'$ also has the splitting property:
		$$ C' = T_p(G\cdot p)\times W', $$
		where $W'\in \V_{n-\dim_p}^{G_p}({\bf N}_p(G\cdot p))$.
		Combining the Proposition \ref{P:good-replacement-property}, Lemma \ref{L:reg-replacement} and the Compactness Theorem \ref{T:compactness}, just as the argument in the proof of \cite[Lemma 5.10]{zhou and zhu}, one can see that $W'$ is a good $G_p$-replacement of $W$ in ${\rm An}$.
		By Proposition \ref{P:good-replacement-property}(iii), we can repeat the procedure above for a finite times and produce a good $G_p$-replacement $W^{(k)}$ of $W^{(k-1)}$ in any other $G_p$-annulus ${\rm An}^{(k)}\subset D$ with outer radius no more than $1$.
		
		Finally, $W$, as well as $C$, is a multiple of some complete minimal hypersurface by the regularity result \cite[Proposition 6.2]{Liu} and the Euclidean volume growth of $C$ coming from the monotonicity formula.
		(One should notice that the regularity results in \cite[Section 5, 6]{Liu} do not need $G$ to be connected by using Lemma \ref{G-stable} in the proof of \cite[Lemma 5.3]{Liu}.)
		Furthermore, $C$ is a cone by \cite[Theorem 19.3]{simon}, and hence, $\spt\|C\|$ must be a $n$-plane. 	
\end{proof}

\medskip
Using the splitting property, one can have the following lemma which is a modified version of \cite[Lemma 5.10]{zhou and zhu}.

\begin{lemma}\label{L:blowup is regular-1}
	Let $2 \leq n \leq 6$ and $\mathcal{H}^{n-1}(M\backslash M^{reg})=0$, $U\subset M$ be an open $G$-set, and $V\in\V_n^G(M)$ be a $(G,c)$-almost minimizing varifold in $U$.
	Given a sequence $p_i\in U$ with $p_i\to p\in U\cap M^{reg}$, and a sequence $r_i>0$ with $r_i\to 0$.
	Let $\overline{V}=\lim (\bleta_{p_i, r_i})_\# V$ be the varifold limit.
	Then $\overline{V}$ is an integer multiple of some complete embedded minimal hypersurface $\Si$ in $T_p M$, and moreover, $\Si$ is proper.
\end{lemma}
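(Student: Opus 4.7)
The plan is to mirror the proof of Proposition \ref{P:tangent-cone} step by step, modifying only the very end where that argument exploits the fixed center $p_i\equiv p$ to conclude the tangent cone is a plane. First, by the slice theorem at $p\in M^{reg}$ together with the $G$-invariance of $V$, I may replace each $p_i$ by an orbit-equivalent point so that $G_{p_i}=G_p$ for every $i$ large; this modification leaves $\overline V$ unchanged. Since $V$ has $c$-bounded first variation by Lemma \ref{(G,c)-almost minimizing implies c-bounded first variation} and $\bleta_{p_i,r_i}$ converts it into a $(cr_i)$-bounded one with $cr_i\to 0$, the limit $\overline V$ is stationary in $T_pM$. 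Lemma \ref{L:splitting} then yields $\overline V = T_p(G\cdot p)\times W$ with $W\in\V_{n-\dim_p}({\bf N}_p(G\cdot p))$ rectifiable. Moreover, since every $g\in G_p$ now fixes each $p_i$, we have $\bleta_{p_i,r_i}\circ g = g\circ \bleta_{p_i,r_i}$, and combined with the $G$-invariance of $V$ this forces $g_\#\overline V=\overline V$ for all $g\in G_p$, so $W\in\V^{G_p}_{n-\dim_p}({\bf N}_p(G\cdot p))$.

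Next, I establish the good $G_p$-replacement property for $W$ in every bounded open $G_p$-subset $D\subset{\bf N}_p(G\cdot p)$. Fix $x\in D$ and a $G_p$-annulus $\textrm{An}={\rm An}^{G_p}(x,s,t)\subset D$. For $i$ large I pull $\textrm{An}$ back by $\bleta_{p_i,r_i}$ to a $G_p$-annulus $\textrm{An}_i$ inside a slice $B_{p_i}$ at $p_i$, and form the $G$-annulus $\an_i=G\cdot \textrm{An}_i\subset M$, whose closure is contained in $U$ for $i$ large. Proposition \ref{P:good-replacement-property} supplies a $(G,c)$-replacement $V_i^*$ of $V$ in $\Clos(\an_i)$; after a diagonal subsequence, $V^*:=\lim_{i\to\infty}(\bleta_{p_i,r_i})_\# V_i^*$ exists and, by Remark \ref{R:splitting}, splits as $V^*=T_p(G\cdot p)\times W'$ with $W'\in\V^{G_p}_{n-\dim_p}({\bf N}_p(G\cdot p))$. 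Invoking the regularity and stability of each $V_i^*$ from Lemma \ref{L:reg-replacement} together with the Compactness Theorem \ref{T:compactness}(ii) applied in the stable $(G,cr_i)$-boundary regime as $cr_i\to 0$, exactly as in the proof of Proposition \ref{P:tangent-cone}, one verifies that $W'$ is a good $G_p$-replacement of $W$ in $\textrm{An}$. Iterating this construction finitely many times over other $G_p$-annuli in $D$ gives the full good $G_p$-replacement property of $W$.

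The regularity theorem \cite[Proposition 6.2]{Liu} then identifies $W$ with an integer multiple of some complete embedded, $G_p$-invariant minimal hypersurface $\Si_W\subset{\bf N}_p(G\cdot p)$, so that $\overline V$ is an integer multiple of $\Si:=T_p(G\cdot p)\times\Si_W$, a complete embedded minimal hypersurface in $T_pM$. Properness of $\Si$ follows from the Euclidean volume-growth bound $\|\overline V\|(B_R)\leq CR^n$ valid for all $R>0$, which combines the monotonicity formula for the stationary varifold $\overline V$ with the density-ratio upper bound furnished by the $(G,c)$-replacement argument in the spirit of \cite[Lemmas 6.4, 6.5]{WTR}; such polynomial growth forces $\Si$ to meet each compact ball of $T_pM$ in finitely many components and hence to be properly embedded.

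The main obstacle is the good $G_p$-replacement step: one must confirm that the splitting of $(\bleta_{p_i,r_i})_\# V_i^*$ produces a bona fide good $G_p$-replacement of $W$, and that the regularity and stability of the $(G,c)$-replacements $V_i^*$ survive the limit $cr_i\to 0$. Both ingredients are already in place from the tangent-cone argument; the only essential difference here is that the varying center of dilation prevents the final cone reduction (via \cite[Theorem 19.3]{simon} in Proposition \ref{P:tangent-cone}), yielding merely a complete embedded --- rather than conical --- minimal hypersurface.
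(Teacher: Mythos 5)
Your proposal is correct and follows essentially the same route as the paper: deduce $G_p$-invariance of $\overline V$ and the splitting $\overline V=T_p(G\cdot p)\times W$ via Lemma \ref{L:splitting}, establish the good $G_p$-replacement property for $W$ by pulling $G_p$-annuli back to $G$-annuli in $M$ and blowing up the $(G,c)$-replacements, and conclude with \cite[Proposition 6.2]{Liu}. The only local deviation is how the varying isotropy groups are normalized: the paper keeps the original $p_i$ and approximates each $g\in G_p$ by conjugates $h_i=g_ig_0^{-1}gg_0g_i^{-1}\in G_{p_i}$ with $h_i\to g$, whereas you move each $p_i$ along its orbit into a slice at $p$; strictly speaking this replacement changes $\overline V$ by $(g_0)_\#$ for a subsequential limit $g_0\in G_p$ of the translating group elements rather than leaving it "unchanged," but since $g_0$ acts as an orthogonal map of $T_pM$ preserving the splitting $T_p(G\cdot p)\times {\bf N}_p(G\cdot p)$, this does not affect the conclusion.
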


\begin{proof}
	By the openness of $M^{reg}$ and $p_i\to p$, we can suppose $p_i\in M^{reg}$. Hence $\{p_i\}_{i=1}^\infty$ and $p$ have the same orbit type, there exist a sequence $\{g_i\}_{i=1}^\infty\subset G$ such that $G_{p_i}=g_i\cdot G_p\cdot g_i^{-1}$, where $G_{p_i}$ is the isotropy group of $p_i$.
	By the compactness of $G$, we can suppose $g_i\to g_0\in G$ after passing to a subsequence.
	Moreover, it's clear that $g_0\cdot G_p\cdot g_0^{-1}= G_p$ since $p_i\to p$ and conjugate transformations are isomorphisms.
	As a result, one can see that $\bleta_{p_i, r_i} $ is $G_{p_i}$-equivariant:
		\begin{eqnarray*}
			h\circ \bleta_{p_i, r_i}(x) = \frac{h\cdot x-h\cdot p_i}{r_i} = \frac{h\cdot x-p_i}{r_i} = \bleta_{p_i, r_i}\circ h( x), \quad \forall h\in G_{p_i}.
		\end{eqnarray*}
		Now for any $g\in G_p$, let $h_i=g_i\cdot g_0^{-1}\cdot g\cdot g_0\cdot g_i^{-1}\in G_{p_i}$.
		It's clear that $h_i\to g$.
		Hence we have
		\begin{eqnarray*}
			g_\#\overline{V} &=& \lim_{i\to \infty} (h_i)_\# (\bleta_{p_i, r_i})_\# V
			\\
			&=& \lim_{i\to \infty} (\bleta_{p_i, r_i})_\# (h_i)_\# V
			\\
			&=& \lim_{i\to \infty} (\bleta_{p_i, r_i})_\# V = \overline{V},
		\end{eqnarray*}
		which implies that $\overline{V}\in \V^{G_p}_n(T_pM)$.
		
		Since $V$ is $(G,c)$-almost minimizing in $U$, by Lemma \ref{(G,c)-almost minimizing implies c-bounded first variation}, $V$ has $c$-bounded first variation in $U$.
		Thus we can apply Lemma \ref{L:splitting} to get the the splitting property of $\overline{V}$:
		$$ \overline{V} = T_p(G\cdot p)\times W,$$
		where $W\in \V_{n-\dim_p}({\bf N}_p(G\cdot p))$.
		Since $T_p(G\cdot p)$ is $G_p$-invariant as varifold, one can see that $W\in \V_{n-\dim_p}^{G_p}({\bf N}_p(G\cdot p))$.
		To prove the lemma, we only need to show that $W$ is an integer multiple of some complete embedded minimal hypersurface in ${\bf N}_p(G\cdot p)$.
		
		The rest of the proof has essentially the same idea as Proposition \ref{P:tangent-cone}.
		One just need to notice that, $V$ has $c$-bounded first variation in $U$ implies that the blowup $\overline{V}=\lim (\bleta_{p_i, r_i})_\# V$ is stationary in $T_pM$.
		Thus $W$ is stationary in ${\bf N}_p(G\cdot p)$ by the product structure of $T_pM$.
		Moreover, since $\bleta_{p_i, r_i}$ is $G_{p_i}$-equivariant and $G_{p_i}\to G_p$, one can regard ${\rm An}_i=(\bleta_{p_i, r_i})^{-1}({\rm An})$ as a $G_{p_i}$-annulus in $B_{p_i}$ for $i$ large, where $B_{p_i}$ is a slice of $G\cdot p$ at $p_i$.
\end{proof}

\medskip
\subsection{Main regularity}~

Now we have the following regularity result for $(G,c)$-min-max varifold.

\begin{theorem}[Main regularity]\label{T:main-regularity}
	Let $2\leq n\leq 6$, and $(M^{n+1}, g_{_M})$ be an $(n+1)$-dimensional smooth, closed Riemannian manifold with a compact Lie group $G$ acting as isometries of cohomogeneity ${\rm Cohom}(G)\geq 3$.
	Suppose  the union of non-principal orbits $M\setminus M^{reg}$ is a smooth embedded submanifold of $M$ without boundary and ${\rm dim}(M\setminus M^{reg})\leq n-2 $.
    If $V \in \V_n^G(M)$ is a varifold which
	\begin{enumerate}
	\item has $c$-bounded first variation in $M$ and
	\item is $(G,c)$-almost minimizing in regular annuli,
	\end{enumerate}
	then $V$ is induced by $\Sigma$, where
	\begin{itemize}
		\item[(i)] $\Sigma$ is a closed, $G$-equivariant almost embedded $G$-invariant $(G,c)$-hypersurface (possibly $G$-disconnected);
		\item[(ii)] the density of $V$ is exactly $1$ at the regular set $\mR(\Si)$ and $2$ at the touching set $\mS(\Si)$.
	\end{itemize}
\end{theorem}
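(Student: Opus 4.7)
The plan is to follow the regularity scheme of \cite[Section 6]{zhou and zhu}, with $(G,c)$-replacements and their regularity (Proposition \ref{P:good-replacement-property} and Lemma \ref{L:reg-replacement}) replacing the non-equivariant analogues, and to split the argument according to whether the base point lies in $M^{reg}$ or in $M\setminus M^{reg}$. In both cases the hypothesis that $V$ is $(G,c)$-almost minimizing in regular annuli guarantees replacements on small regular $G$-annuli $\an(P_p,s,t)\in\ann_r(P_p)$ around every orbit, since by (\ref{Eq:regular annuli}) their closures lie in $M^{reg}$.

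For $p\in \spt\|V\|\cap M^{reg}$, I would first apply Proposition \ref{P:tangent-cone} to conclude that every $C\in\VarTan(V,p)$ is an integer multiple of an $n$-plane through the origin, and combine this with the blowup classification of Lemma \ref{L:blowup is regular-1}. Then, as in \cite[Proposition 5.11]{zhou and zhu}, I would iterate the $(G,c)$-replacement in two overlapping small regular $G$-annuli around $p$ and invoke Lemma \ref{L:reg-replacement} together with the compactness Theorem \ref{T:compactness} to produce a smooth, $G$-equivariant almost embedded, stable $(G,c)$-boundary $\Sigma_p$ in a $G$-neighborhood of $p$ which agrees with $V$ there as varifolds. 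The splitting property (Lemma \ref{L:splitting} and Remark \ref{R:splitting}) reduces the classification of tangent cones and blowups from $T_pM$ to the $G_p$-invariant normal factor in ${\bf N}_p(G\cdot p)$, where \cite[Proposition 6.2]{Liu} supplies the required minimal-surface regularity.

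For $p\in\spt\|V\|\cap (M\setminus M^{reg})$, where $T_pM$ is only a $G_p$-space and $G$-annuli of $M$ need not correspond to $G$-annuli of $T_pM$, I would instead treat the component $P_p\subset M\setminus M^{reg}$ as a whole. Because $M\setminus M^{reg}$ is a smooth submanifold without boundary of dimension at most $n-2$, every $\an(P_p,s,t)\in\ann_{r_1(p)}(P_p)$ has closure in $M^{reg}$ (so replacements exist), and $\mH^n(M\setminus M^{reg})=0$. Adapting the blowup argument in \cite[Lemma 5.10]{zhou and zhu} using these regular $G$-annuli together with the splitting property and \cite[Proposition 6.2]{Liu}, I would show that any tangent cone or blowup of $V$ at $p$ is a stationary integral varifold of the form $T_p(G\cdot P_p)\times (\text{minimal cone in the normal factor})$, hence a multiple of a hyperplane. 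Since $M\setminus M^{reg}$ has codimension at least $3$ in $M$ and thus at least $2$ in $\Sigma$, a standard removable singularity argument then extends the smooth $(G,c)$-hypersurface obtained on $M^{reg}$ across $P_p$; patching the local pieces from the two cases yields the global $\Sigma$, and the density dichotomy on $\mR(\Sigma)$ and $\mS(\Sigma)$ is inherited from Lemma \ref{L:reg-replacement}(2).

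The main technical obstacle, as flagged in the introduction, is the mismatch between the $G$-action on $M$ and the $G_p$-action on $T_pM$: a $G$-annulus in $M$ generally blows up to the product of a $G_p$-annulus in ${\bf N}_p(G\cdot p)$ with $T_p(G\cdot p)$, so one cannot directly invoke the varifold regularity of \cite[Appendix C]{zhou and zhu}. The splitting property combined with Liu's normal-regularity dissolves this on $M^{reg}$, while the codimension hypothesis on $M\setminus M^{reg}$ circumvents the further failure of $G_p$-invariance of $M\setminus M^{reg}$ by letting us treat it as a single ambient submanifold. Once these ingredients are in place, constancy of mean curvature, $G$-equivariant almost embedding, and the density computation transcribe directly using Proposition \ref{P:good-replacement-property}, Lemma \ref{L:reg-replacement}, Theorem \ref{T:compactness} and Remark \ref{R:almost embedded}.
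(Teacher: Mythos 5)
Your proposal follows essentially the same route as the paper: $(G,c)$-replacements on overlapping regular $G$-annuli around $P_p$, the splitting property plus Liu's normal-space regularity to classify tangent cones and blowups, smooth gluing via the compactness theorem, and a removable-singularity step across the low-dimensional central set. The paper does not split into the cases $p\in M^{reg}$ and $p\notin M^{reg}$; it works uniformly with $P_p$ and the regular annuli $\an(P_p,s,t)$, which is only a cosmetic difference from your organization.

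The one place where your sketch is materially thinner than the paper is the extension across the central orbit $G\cdot P_p$. First, this step is needed for \emph{every} $p\in\spt\|V\|$, not only for $p\in M\setminus M^{reg}$: even when $p$ is regular, the replacements live only on annuli, and the orbit $G\cdot p$ has dimension up to $n-2$, so the classical point-singularity argument of Zhou--Zhu does not apply verbatim. Second, for $c\neq 0$ this is not a ``standard removable singularity argument'': the paper first establishes the mass bound $\|V^{**}_{s_1}\|(B^G_r(P_p))\leq Cr^{2}$ (covering argument, Weyl's tube formula, monotonicity), then extends the first variation identity across $G\cdot P_p$ by a Harvey--Lawson-type capacity argument using $\dim(G\cdot P_p)\leq n-2$, verifies the absence of classical singularities, and only then invokes the Bellettini--Wickramasekera regularity theory for stable prescribed-mean-curvature integral varifolds to remove the singularity. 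You should make these ingredients explicit; with them in place the rest of your argument matches the paper's.
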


\begin{remark}\label{the case of c}
We only focus on the case of $c>0$ in Theorem \ref{T:main-regularity}. Indeed the work of Liu \cite{Liu} under the smooth sweepouts settings and T. R. Wang \cite{WTR} under the Almgren-Pitts settings completely resolved the $c=0$ case of Theorem \ref{thm:main theorem}.
\end{remark}

\medskip
\begin{proof}
    Let $p\in \spt\|V\|$, by the assumption of ${\rm Cohom}(G)\geq 3$ and $M\setminus M^{reg}$ is a smooth embedded submanifold of $M$ without boundary and ${\rm dim}(M\setminus M^{reg})\leq n-2 $, then there exists $0<r_0<r(p)$ such that for any $0<r<r_0$, the mean curvature $H$ of $\partial B^G_r(P_p)\cap M$ in $M$ is greater than $c$. Here $r(p)$ is as in Definition \ref{Def:a.m. in regular annuli}.

    By the maximum principal \cite[Theorem 5]{white} see also \cite[Proposition 2.13]{zhou and zhu} and the convexity of $B^G_r(P_p)$, if $W\in \mathcal{V}_n(M)$ has $c$-bounded first variation in $B^G_r(P_p)\cap M$ and $W\llcorner B^G_r(P_p)\neq 0$, then
	\begin{equation}\label{Eq:convex ball}
		\emptyset \neq {\rm spt}(W)\cap \partial B^G_r(P_p)  = {\rm Clos}\big[{\rm spt}\|W\| \setminus {\rm Clos}(B^G_r(p)) \big] \cap \partial B^G_r(p).
	\end{equation}

	{\bf Step 1.}
	After adding $G$- in front of relevant objects, the first step of the proof of \cite[Theorem 6.1]{zhou and zhu} would carry over.
	
	{\bf Step 2.}
	We only need to modify the second step of the proof of \cite[Theorem 6.1]{zhou and zhu} in a few places.
	
	First, since every small $G$-annuli around $P_p$ is contained in $M^{reg}$ (see Section 5 (\ref{Eq:regular annuli})), ${\rm Cohom}(G)\geq 3$ and ${\rm dim}(M\setminus M^{reg})\leq n-2 $, Proposition \ref{P:tangent-cone}, Lemma \ref{L:blowup is regular-1} and all the results in the previous subsection can be applied to any $q\in \an(P_p,s,t)\in \ann_r(P_p)$.
	Hence we can use Lemma \ref{L:blowup is regular-1} in the place of \cite[Lemma 5.10]{zhou and zhu} and get the following result as Claim 3(A) in \cite[Page 475]{zhou and zhu} (noting $\Gamma\subset \Clos(\an(p,s,t))\cap M$):
	\begin{claim}
		Fix $x\in \mR(\Gamma)$, for any sequence of $x_i\to x$ with $x_i\in\mR(\Gamma)$ and $r_i\to 0$, we have
		$$ \lim_{i\to\infty} (\bleta_{x_i, r_i})_\# V^{**}=T_x\Si_1 \quad {\rm as~ varifolds}. $$
	\end{claim}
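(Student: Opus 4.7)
The plan is to combine the local smoothness of $V^{**}$ at the regular point $x$ (provided by Lemma \ref{L:reg-replacement}) with the blow-up classification of Lemma \ref{L:blowup is regular-1}, and then appeal to unique continuation for minimal hypersurfaces to pin down the limit.

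First, since $x\in\mR(\Gamma)$ and every $\an(P_p,s,t)\in\ann_r(P_p)$ is a regular annulus (so $\Gamma\subset M^{reg}$), Lemma \ref{L:reg-replacement} produces a $G$-neighborhood $U_x\subset M^{reg}$ of $x$ in which $\Gamma\cap U_x$ is a single smooth embedded sheet $\Sigma_1$ and $V^{**}\llcorner U_x=|\Sigma_1\cap U_x|$ with multiplicity one. By the hypothesis $x_i\in\mR(\Gamma)$ and $x_i\to x$, we may assume $x_i\in\Sigma_1\cap U_x$ for all $i$ large. Because $\Sigma_1$ is $C^1$ at $x$ with tangent plane $T_x\Sigma_1$, writing $\Sigma_1$ locally as the graph of a function $u$ over $T_x\Sigma_1$ with $u(0)=0$ and $Du(0)=0$, and computing
\begin{equation*}
\bleta_{x_i,r_i}\bigl(y,u(y)\bigr)=\Bigl(\tfrac{y-y_i}{r_i},\tfrac{u(y)-u(y_i)}{r_i}\Bigr),
\end{equation*}
a standard blow-up argument for $C^1$ hypersurfaces yields $\lim_{i\to\infty}(\bleta_{x_i,r_i})_\#|\Sigma_1|=|T_x\Sigma_1|$ as varifolds in $T_xM$.

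Next, Proposition \ref{P:good-replacement-property} guarantees that $V^{**}$ is itself $(G,c)$-almost minimizing in the $G$-open set where it was produced and has $c$-bounded first variation in $M$. Lemma \ref{L:blowup is regular-1} then applies at the sequence $(x_i,r_i)$: after extracting a subsequence, the varifold limit $\overline V=\lim_i(\bleta_{x_i,r_i})_\# V^{**}$ is an integer multiple $m\,|\tilde\Sigma|$ of some complete, properly embedded minimal hypersurface $\tilde\Sigma\subset T_xM$. Since $V^{**}=|\Sigma_1|$ on $U_x$, comparing with the previous display shows that $m\,|\tilde\Sigma|$ and $|T_x\Sigma_1|$ agree on every bounded open subset of $T_xM$. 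A complete properly embedded minimal hypersurface in $T_xM\cong\R^{n+1}$ which coincides with a hyperplane on a nonempty open set must equal that hyperplane, so $\tilde\Sigma=T_x\Sigma_1$ and $m=1$.

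Since every subsequential limit equals $T_x\Sigma_1$, the full sequence $(\bleta_{x_i,r_i})_\# V^{**}$ converges to $T_x\Sigma_1$, as claimed. The main obstacle is that $x$ may lie on an orbit of positive dimension, so $T_xM$ is not purely normal to the hypersurface and one cannot directly invoke a standard Euclidean blow-up classification; the splitting property built into Lemma \ref{L:blowup is regular-1} (which factors out the orbit direction $T_x(G\cdot x)$ and reduces the classification to a $G_x$-equivariant problem in the normal slice ${\bf N}_x(G\cdot x)$) is precisely what makes the identification with a single $n$-plane legitimate in the equivariant setting.
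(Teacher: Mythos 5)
There is a genuine gap, and it sits in your very first step. You assert that Lemma \ref{L:reg-replacement} gives a full $G$-neighborhood $U_x$ of $x$ on which $V^{**}\llcorner U_x=|\Sigma_1\cap U_x|$ is a single smooth multiplicity-one sheet. But in the setting of this Claim, $x\in\mR(\Gamma)$ with $\Gamma=\Si_1\cap\partial B^G_{s_2}(P_p)$: the point lies on the \emph{interface} between the two successive replacements. Lemma \ref{L:reg-replacement} only yields regularity of a replacement in the \emph{interior} of the region where it was performed, so $V^{**}$ is known to equal $\Si_1$ only on $M\setminus\Clos(B^G_{s_2}(P_p))$ and to equal the second replacement surface $\Si_2$ only in the open annulus $\an(P_p,s_1,s_2)$; how these two pieces meet along $\partial B^G_{s_2}(P_p)$ is precisely what this Claim (together with the subsequent gluing claim) is designed to establish. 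If your first paragraph were available, all of Step 2 of the regularity proof would be vacuous, so the argument as written is circular.

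The downstream consequences are that your comparison of $m\,|\tilde\Sigma|$ with $|T_x\Sigma_1|$ is only legitimate on the open half-space $H\subset T_xM$ arising as the blow-up of $M\setminus\Clos(B^G_{s_2}(P_p))$, not ``on every bounded open subset of $T_xM$.'' The paper's route (following Claim 3(A) of Zhou--Zhu, with Lemma \ref{L:blowup is regular-1} replacing their Lemma 5.10) is: (1) Lemma \ref{L:blowup is regular-1} gives $\overline V=m|\tilde\Sigma|$ for a complete, properly embedded minimal hypersurface $\tilde\Sigma$; (2) blowing up the outside piece, where $V^{**}=\Si_1$ is genuinely smooth at the regular point $x$, gives $\overline V\llcorner H=|T_x\Si_1|\llcorner H$; (3) unique continuation identifies the component of $\tilde\Sigma$ through that half-hyperplane with $T_x\Si_1$ and forces $m=1$; and (4) any further component of $\tilde\Sigma$, which would be disjoint from $T_x\Si_1$ and supported in the blow-up of $\Clos(B^G_{s_2}(P_p))$, is ruled out by the density bound $\Theta^n(\|\overline V\|,y)\leq\Theta^n(\|V^{**}\|,x)=1$ coming from the monotonicity formula for varifolds with $c$-bounded first variation and the fact that sub-case (A) is exactly the case $\Theta^n(\|V^*\|,x)=1$. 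Your proposal is missing both the restriction to the half-space in step (2) and the density argument of step (4); the connectedness of $\tilde\Sigma$ is not free, so unique continuation alone does not finish the proof.
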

	Then, after considering the blowups $\bleta_{x_i, r_i}(\Sigma_2\cap B^G_{r_i/2}(z_i))$ in the proof of Claim 4(A) \cite[Page 476]{zhou and zhu}, we can show the gluing of the $(G,c)$-replacements is smooth on the overlap for sub-case (A).

	As for sub-case (B), we can also get the Claim 3(B) in \cite[Page 477]{zhou and zhu} by using Lemma \ref{L:blowup is regular-1} and considering the various blowups $(\bleta_{x_i, r_i})_\# V^{**}$, $\bleta_{x_i, r_i}(\Sigma_{1,1})$, $\bleta_{x_i, r_i}(\Sigma_{1,2})$, as well as the vector $v=\lim_{i\to\infty} \frac{x_i'-x_i}{r_i}$.

Hence, we can construct successive $G$-replacements $V^*$ and $V^{**}$ on two overlapping $G$-regular annuli $\an(P_p,s,t),~\an(P_p,s_1,s_2)$, ($0<s_1<s<s_2<t$), and glue them smoothly across $\partial B^G_{s_2}(P_p)$.

	{\bf Step 3.}
	As the third step in the proof of \cite[Theorem 6.1]{zhou and zhu}, we denote $V^{**}$ by $V^{**}_{s_1}$ and $\Sigma_2$ by $\Sigma_{s_1}$ to indicate the dependence on $s_1$.
    Since unique continuation hold for immersed CMC hypersurfaces, by Step 2 we have $\Sigma_{s_1}=\Sigma_1$ in $\an(P_p,s,s_2)$, and moreover, we have $\Sigma_{s'_1}=\Sigma_{s_1}$
    in $\an(P_p,s_1,s_2)$, for any $s'_1<s_1<s$. Hence
    $$\Sigma:=\bigcup_{0<s_1<s}\Sigma_{s_1}$$
    is a smooth, almost embedded, stable $(G,c)$-hypersurface in $(B^G_{s_2}(P_p)\backslash G\cdot P_p)\cap M$.
    By Proposition \ref{P:good-replacement-property}, $V^{**}_{s_1}$ has $c$-bounded first variation and uniformly bounded mass for all $0<s_1<s$. By a covering argument
     \cite[Lemma A.1]{Liu}, Weyl's Tube Formula  \cite[Corollary 4.11]{Gray} and the monotonicity formula \cite[Theorem 40.2]{simon}, we have
     $$\|V^{**}_{s_1}\|(B^G_r(P_p))\leq Cr^{n-{\rm dim}(G\cdot P_p)}\leq Cr^2$$
     for some uniform $C>0$. Therefore as $s_1\to 0$, the family $V^{**}_{s_1}$ will converge to a varifold $\tilde{V}\in \V_n^G(M)$, i.e
     $\tilde{V}=\lim_{s_1\to 0}V^{**}_{s_1}$, we have $\|\tilde{V}\|(G\cdot P_p)=0$ and
     \begin{eqnarray*}
       \tilde{V}=
       \begin{cases}
       \Sigma       & in \ \ (B^G_{s_2}(P_p)\backslash G\cdot P_p)\cap M \\
       V^*   & in \ \ M\backslash B^G_{s}(P_p) \\
       \end{cases}
     \end{eqnarray*}

	{\bf Step 4.}
    The regularity of $\tilde{V}$ at $G\cdot P_p$ comes from the \cite[Corollary 1.1]{Bellettini and Wickramasekera}.
    First, we have $\tilde{V}=\lim_{s_1\to 0}V^{**}_{s_1}$, $\|\tilde{V}\|(G\cdot P_p)=0$ and $\tilde{V}=\Sigma$ in $(B^G_{s_2}(P_p)\backslash G\cdot P_p)\cap M$. Then by modifying the \cite[Claim 1]{zhou and zhu} in step 1 of the proof of \cite[Theorem 6.1]{zhou and zhu}, we have that $\tilde{V}$ is a boundary of some $G$-invariant Caccioppoli set $\Omega$, i.e $\tilde{V}=|\partial \Omega|$. 

	One should notice that the center of $B^G_r(P_p)$ is $G\cdot P_p$, where $G\cdot P_p$ is a principle orbit if $p\in M^{reg} $ or a smooth embedded submanifold of $M$ without boundary if $p\in M\setminus M^{reg}$.
	But since ${\rm Cohom}(G)\geq 3$ and ${\rm dim}(M\setminus M^{reg})\leq n-2 $, we have $\dim(G\cdot P_p)\leq n-2$.
	Thus the extension argument is still valid here (c.f. the proof in \cite[Theorem 4.1]{HL75}), which gives the {\em first variation} for $\Omega$
    $$\de\Ac|_{\Om}(X)=0,\   X\in \X(M) \ with \ spt(X)\subset B^G_{s_2}(P_p),$$
    where $\tilde{V}=|\partial \Omega|$, $\Omega\in \C^G(M)$.

    Since $\dim(G\cdot P_p)\leq n-2$ and $\tilde{V}=\Sigma$ is a smooth, almost embedded, stable $(G,c)$-hypersurface in $(B^G_{s_2}(P_p)\backslash G\cdot P_p)\cap M$ , $\tilde{V}=|\partial \Omega|$ has no classical singularities in $B^G_r(P_p)$ (c.f. the \cite[Definition 1.5]{Bellettini and Wickramasekera}) and the condition (c) in \cite[Theorem 1.5]{Bellettini and Wickramasekera} is satisfied.
    Hence, the singularity of $\tilde{V}$ at $G\cdot P_p$ is removable by \cite[Corollary 1.1]{Bellettini and Wickramasekera}.
		
	{\bf Step 5.} Same with the final step in the proof of \cite[Theorem 6.1]{zhou and zhu} except using $G$-invariant objects.
\end{proof}

\vspace{0.5cm}
\bibliographystyle{amsbook}

\begin{thebibliography}{99}


\bibitem{almgren}
F. Almgren. \textit{The homotopy groups of the integral cycle groups,} Topology  (1962), 257--299.

\bibitem{almgren-varifolds}
F. Almgren. \textit{The theory of varifolds,} Mimeographed notes, Princeton (1965).

\bibitem{BCE88}
J. L. Barbosa, M. do~Carmo, and J. Eschenburg.
\textit{ Stability of hypersurfaces of constant mean curvature in Riemannian manifolds,}
\newblock {\em Math. Z.}, 197(1):123--138, 1988.

\bibitem{Bellettini and Wickramasekera}
C. Bellettini and N. Wickramasekera
\textit{Stable prescribed-mean-curvature integral varifolds of codimension I: regularity and compactness,} arXiv:1902.09669v2 [math.DG] (2020)

\bibitem{Brezis and Coron}
H. Brezis and J. M. Coron.
\textit{Multiple solutions of H-systems and Rellich's conjecture,}
 Comm. Pure Appl. Math. 37 (1984), no. 2, 149--187.

\bibitem{CD03}
T. H. Colding and C. De~Lellis.
\textit{The min-max construction of minimal surfaces,}
 In {\em Surveys in differential geometry, {V}ol.\ {VIII} ({B}oston,{MA}, 2002)}, volume~8 of {\em Surv. Differ. Geom.}, pages 75--107. Int. Press, Somerville, MA, 2003.

\bibitem{De2013The}
C. De~Lellis and D. Tasnady.
\textit{The existence of embedded minimal hypersurfaces,}  Journal of Differential Geometry, 95(3):355--388, 2013.

\bibitem{Duzaar-Steffen96}
F. Duzaar and K. Steffen.
\textit{Existence of hypersurfaces with prescribed mean curvature in Riemannian manifolds,}
Indiana Univ. Math. J, 45(4):1045--1093, 1996.

\bibitem{Duzaar-Grotowski00}
F. Duzaar and J. F. Grotowski.
\textit{Existence and regularity for higher-dimensional H-systems,}
Duke Math. J, 101 (2000), no. 3, 459--485.


\bibitem{federer}
H. Federer. \textit{Geometric measure theory,}
Die Grundlehren der mathematischen Wissenschaften, Band 153 Springer-Verlag New York Inc., New York 1969.

\bibitem{Gray}
A. Gray, \textit{Tubes,} Springer, New York, 2004.

\bibitem{HL75}
R. Harvey and B. Lawson. \textit {Extending minimal varieties,}
{\em Invent. Math.}, 28:209--226, 1975.

\bibitem{Heinz54}
E. Heinz.
\textit{ \"uber die {E}xistenz einer {F}l\"ache konstanter mittlerer {K}r\"ummung bei vorgegebener {B}erandung,}
 Math. Ann, 127:258--287, 1954.

\bibitem{Heinz69}
E. Heinz.
\textit{On the nonexistence of a surface of constant mean curvature with finite area and prescribed rectifiable boundary,}
 Arch. Rational Mech. Anal. 35 (1969), 249--252.

\bibitem{Hildebrandt70}
S. Hildebrandt.
\textit{On the {P}lateau problem for surfaces of constant mean curvature,}
Comm. Pure Appl. Math, 23:97--114, 1970.


\bibitem{ketover} D. Ketover. \textit{Equivariant min-max theory,} arXive:1612.08692 [math.DG] (2016).

\bibitem{Liu} Z. Liu. \textit{The existence of embedded $G$-invariant minimal hypersurface,} Calc. Var. Partial Differential Equations 60 (2021), no. 1, Paper No.36, 21 pp.



\bibitem{moore}
J. D. Moore and R. Schlafly.
\textit{On Equivariant Isometric Embeddings,}
Math. Z. 173 (1980), no. 2, 119--133.

\bibitem{Mo03}
F. Morgan.
\textit{Regularity of isoperimetric hypersurfaces in Riemannian manifolds,}
 Trans. Amer. Math. Soc, 355(12):5041--5052, 2003.

\bibitem{pitts}
J. Pitts. \textit{Existence and regularity of minimal surfaces on Riemannian manifolds,} Mathematical Notes 27, Princeton University Press, Princeton, (1981).

\bibitem{Pitts-Rubinstein}
J. Pitts and J. H. Rubinstein. \textit{Equivariant minimax and minimal surfaces in geometric three-manifolds,} Bull. Amer. Math. Soc. (N.S.) 19 (1988), no.1, 303--309.

\bibitem{Pitts-Rubinstein-2}
J. Pitts and J. H. Rubinstein. \textit{Applications of minimax to minimal surfaces and the topology of 3-manifolds,} Miniconference on geometry and partial differential equations, 2 (Canberra, 1986), 137--170, Proc. Centre Math. Anal. Austral. Nat. Univ., 12, Austral. Nat. Univ., Can- berra, 1987


\bibitem{schoen-simon}
R. Schoen and L. Simon. \textit{Regularity of stable minimal hypersurfaces,}
Comm. Pure Appl. Math. 34 (1981), 741--797.


\bibitem{simon} L. Simon. \textit{Lectures on geometric measure theory,}
Proceedings of the Centre for Mathematical Analysis, Australian National University,  Canberra, (1983).


\bibitem{struwe85}
M. Struwe.
\textit{Large {$H$}-surfaces via the mountain-pass-lemma,}
 Math. Ann, 270(3):441--459, 1985.

\bibitem{struwe86}
M. Struwe.
\textit{Nonuniqueness in the Plateau problem for surfaces of constant mean curvature,}
Arch. Rational Mech. Anal. 93 (1986), no. 2, 135--157.

\bibitem{struwe88}
M. Struwe.
\textit{The existence of surfaces of constant mean curvature with free boundaries,}
 Acta Math, 160(1-2):19--64, 1988.

\bibitem{wall}C. T. C. Wall, \textit{Differential Topology,} Cambridge University Press, July 2016.

\bibitem{white}
B. White, \textit{The maximum principle for minimal varieties of arbitrary codimension,} Communications in Analysis \& Geometry 18.3(2012): pages. 421-432.

\bibitem{WTR}
T. R. Wang.
\textit{Min-max Theory for G-Invariant Minimal Hypersurfaces,}
arXiv:2009.10995 [math.DG] (2020).





\bibitem{zhou}
X. Zhou.
\textit{Min-max minimal hypersurface in $(M^{n+1},g)$ with $Ric>0$ and $2\leq n\leq 6$,}
J. Differential Geom. 100 (2015), no. 1, 129--160.

\bibitem{zhou and zhu}
X. Zhou and  J. J. Zhu.
\textit{Min-max theory for constant mean curvature hypersurfaces,}
Invent. Math. 218 (2019), no. 2, 441--490.

\bibitem{zhou and zhu 20}
X. Zhou and  J. J. Zhu.
\textit{Existence of hypersurfaces with prescribed mean curvature I-generic min-max,}
Camb. J. Math. 8 (2020), no. 2, 311--362.





\end{thebibliography}

\end{document}